\documentclass[12pt,twoside]{amsart}
\usepackage{amssymb}
\usepackage{xypic}
\usepackage{color}

\title{Minimal model theory for log surfaces} 
\author{Osamu Fujino} 
\subjclass[2010]{Primary 14E30; Secondary 14J10.}
\keywords{minimal model program, abundance theorem, log canonical 
ring}
\date{2011/8/18, version 4.24}
\address{Department of Mathematics, Faculty of 
Science, Kyoto University, 
Kyoto 606-8502 Japan} 
\email{fujino@math.kyoto-u.ac.jp}
\newcommand{\Proj}[0]{{\operatorname{Proj}}}
\newcommand{\Hom}[0]{{\operatorname{Hom}}}
\newcommand{\Bs}[0]{{\operatorname{Bs}}}
\newcommand{\Nklt}[0]{{\operatorname{Nklt}}}
\newcommand{\Nlc}[0]{{\operatorname{Nlc}}}
\newcommand{\Exc}[0]{{\operatorname{Exc}}}
\newcommand{\Supp}[0]{{\operatorname{Supp}}}
\newcommand{\Pic}[0]{{\operatorname{Pic}}}
\newcommand{\mult}[0]{{\operatorname{mult}}}
\newcommand{\Spec}[0]{{\operatorname{Spec}}}

\newtheorem{thm}{Theorem}[section]
\newtheorem{lem}[thm]{Lemma}
\newtheorem{cor}[thm]{Corollary}
\newtheorem{prop}[thm]{Proposition}

\theoremstyle{definition}
\newtheorem{ex}[thm]{Example}
\newtheorem{defn}[thm]{Definition}
\newtheorem{rem}[thm]{Remark}
\newtheorem*{ack}{Acknowledgments}      
         
\newtheorem{say}{Step}
\newtheorem{sa}[thm]{}
\begin{document}
\bibliographystyle{amsalpha+}

\maketitle

\begin{abstract}
We discuss the log minimal model theory for log surfaces. 
We show that the log minimal model program, 
the finite generation of log canonical rings, and 
the log abundance theorem for log surfaces hold 
true under assumptions weaker than the usual 
framework of the log minimal model theory. 
\end{abstract} 

\tableofcontents

\section{Introduction}\label{sec1}

We discuss the log minimal model theory for 
log surfaces. 
This paper completes Fujita's results 
on the semi-ampleness of 
semi-positive parts of Zariski decompositions of log canonical 
divisors and the finite generation of log canonical 
rings for smooth 
projective log surfaces in \cite{fujita} and 
the log minimal model program for projective log canonical 
surfaces discussed by Koll\'ar and 
Kov\'acs in \cite{koko}. 
We show that 
the log minimal model program for surfaces works and 
the log abundance theorem and the finite generation of 
log canonical rings for surfaces 
hold 
true under assumptions weaker than 
the usual framework of the log minimal model theory 
(cf.~Theorems \ref{thm32}, \ref{43}, 
and \ref{aban}). 

The log minimal model program 
works for $\mathbb Q$-factorial log surfaces and 
log canonical surfaces by our new cone and contraction 
theorem for log varieties (cf.~\cite[Theorem 1.1]{fujino2}), 
which is the culmination of the works of several authors. 
By our log minimal model program for log surfaces, 
Fujita's results in \cite{fujita} are clarified and 
generalized. 
In \cite{fujita}, Fujita treated a pair $(X, \Delta)$ where 
$X$ is a smooth projective surface and 
$\Delta$ is a boundary $\mathbb Q$-divisor on $X$ without any 
assumptions on singularities of the pair $(X, \Delta)$. 
We note that our log minimal model program discussed in this paper 
works for such pairs (cf.~Theorem \ref{thm32}). It is not necessary to assume that 
$(X, \Delta)$ is log canonical. 

Roughly speaking, 
we will prove the following theorem in this paper. 
Case (A) in Theorem \ref{main-ka} is new. 

\begin{thm}[{cf.~Theorems \ref{thm32} and \ref{rdiv}}]\label{main-ka}
Let $X$ be a normal projective surface defined over $\mathbb C$ 
and let $\Delta$ be an effective $\mathbb R$-divisor 
on $X$ such that 
every coefficient of $\Delta$ is less than or equal to one. 
Assume that one of the following conditions holds{\em{:}} 
\begin{itemize}
\item[(A)] $X$ is $\mathbb Q$-factorial, or 
\item[(B)] $(X, \Delta)$ is log canonical. 
\end{itemize}
Then we can run the log minimal model program 
with respect to $K_X+\Delta$ and 
obtain a sequence of extremal contractions  
$$
(X, \Delta)=(X_0, \Delta_0)
\overset{\varphi_0}\to (X_1, \Delta_1)\overset{\varphi_1}\to 
\cdots\overset{\varphi_{k-1}}\to (X_k, \Delta_k)=(X^*, \Delta^*)
$$ 
such that 
\begin{itemize}
\item[(1)] {\em{(Minimal model)}} $K_{X^*}+\Delta^*$ is semi-ample 
if $K_X+\Delta$ is pseudo-effective, and 
\item[(2)] {\em{(Mori fiber space)}} there is a morphism 
$g:X^*\to C$ such that 
$-(K_{X^*}+\Delta^*)$ is $g$-ample, $\dim C<2$, and 
the relative Picard number $\rho (X^*/C)=1$, 
if $K_X+\Delta$ is not pseudo-effective. 
\end{itemize}
We note that, in {\em{Case (A)}}, 
we do not assume that $(X, \Delta)$ is log canonical. 
We also note that $X_i$ is $\mathbb Q$-factorial for every $i$ in {\em{Case (A)}} 
and that $(X_i, \Delta_i)$ is log canonical 
for every $i$ in {\em{Case (B)}}. 
Moreover, in both cases, $X_i$ has only rational singularities for every $i$ if so does 
$X$ {\em{(cf.~Proposition \ref{2626})}}. 
\end{thm}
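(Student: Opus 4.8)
The plan is to run the log minimal model program with respect to $K_X+\Delta$ and to describe the resulting model; this is the content of Theorems \ref{thm32} and \ref{rdiv}, and I sketch the strategy here. The engine is the cone and contraction theorem for log varieties (\cite[Theorem 1.1]{fujino2}), which applies to $(X,\Delta)$ in Case (A) since $X$ is $\mathbb Q$-factorial and in Case (B) since $(X,\Delta)$ is log canonical. If $K_X+\Delta$ is nef we stop; otherwise there is a $(K_X+\Delta)$-negative extremal ray with a contraction morphism $\varphi\colon X\to Y$. As $\dim X=2$, there are exactly two possibilities. If $\dim Y<2$, then $\varphi$ itself is a Mori fiber space --- $-(K_X+\Delta)$ is $\varphi$-ample and $\rho(X/Y)=1$ --- so conclusion (2) holds with $(X^*,\Delta^*)=(X,\Delta)$. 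If $\dim Y=2$, then $\varphi$ contracts a curve to a point; we replace $(X,\Delta)$ by $(Y,\varphi_*\Delta)$ and repeat. One checks that the hypotheses persist: in Case (A), $Y$ is again $\mathbb Q$-factorial with $\varphi_*\Delta$ a boundary, while in Case (B), once $K_Y+\varphi_*\Delta$ is seen to be $\mathbb R$-Cartier, the negativity lemma applied to the $(K_X+\Delta)$-negative morphism $\varphi$ shows $(Y,\varphi_*\Delta)$ is again log canonical. Since each divisorial contraction drops the Picard number --- a positive integer --- by one, the program terminates at some $(X_k,\Delta_k)=(X^*,\Delta^*)$.

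At termination we have either a Mori fiber space or a model with $K_{X^*}+\Delta^*$ nef. To see that these correspond precisely to $K_X+\Delta$ being non-pseudo-effective and pseudo-effective respectively, note that for each divisorial contraction $\varphi$ above one has $K_{X_i}+\Delta_i=\varphi^*(K_{X_{i+1}}+\Delta_{i+1})+E_i$ with $E_i$ effective and $\varphi$-exceptional, so pseudo-effectiveness of $K_{X_i}+\Delta_i$ is equivalent to that of $K_{X_{i+1}}+\Delta_{i+1}$; on the other hand a Mori fiber space forces $K_{X^*}+\Delta^*$ to be non-pseudo-effective (it has negative degree on a covering family of rational curves, or is anti-ample), whereas a nef divisor is pseudo-effective. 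Thus the dichotomy of (1) and (2) is exactly the dichotomy between $K_X+\Delta$ pseudo-effective and not.

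The remaining --- and main --- point is to show, when $K_X+\Delta$ is pseudo-effective, that the nef divisor $K_{X^*}+\Delta^*$ is semi-ample; this is the log abundance theorem for log surfaces. I would argue by cases on the numerical dimension $\nu\in\{0,1,2\}$ of $K_{X^*}+\Delta^*$. If $\nu=2$ the divisor is nef and big, and semi-ampleness follows from the base-point-free theorem for log canonical surface pairs; in Case (A), where $(X^*,\Delta^*)$ may fail to be log canonical, one first reduces to the log canonical case using that the non-log canonical locus of a boundary pair on a $\mathbb Q$-factorial surface consists of finitely many points --- this reduction is precisely where Case (A), being new, demands extra work. If $\nu=1$, one must improve $\nu$ to the Iitaka dimension, i.e.\ show $\kappa(X^*,K_{X^*}+\Delta^*)=1$; here I would use a Zariski-decomposition analysis in the spirit of Fujita's \cite{fujita} together with the classification of projective surfaces, concluding that $K_{X^*}+\Delta^*$ is pulled back from an ample $\mathbb R$-divisor on a curve and hence semi-ample. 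If $\nu=0$, so $K_{X^*}+\Delta^*\equiv 0$, one shows $K_{X^*}+\Delta^*\sim_{\mathbb R}0$, exploiting that $-K_{X^*}\equiv\Delta^*\ge 0$ forces $X^*$ into a very restricted class of surfaces. Finally, the $\mathbb Q$-factoriality of each $X_i$ in Case (A) and the log canonicity of each $(X_i,\Delta_i)$ in Case (B) were obtained in the course of running the program above, and the preservation of rational singularities is Proposition \ref{2626}.
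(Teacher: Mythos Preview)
Your outline of the MMP itself is essentially the paper's (Theorem~\ref{thm32}), but in Case~(B) you gloss over a genuine difficulty: you write ``once $K_Y+\varphi_*\Delta$ is seen to be $\mathbb R$-Cartier, the negativity lemma \ldots''. But in Case~(B), $X$ need not be $\mathbb Q$-factorial, so $\mathbb R$-Cartierness of $K_Y+\varphi_*\Delta$ is not automatic. The paper runs the logic the other way: the negativity lemma shows $(Y,\varphi_*\Delta)$ is \emph{numerically} lc (a notion requiring no $\mathbb R$-Cartier hypothesis), and then Proposition~\ref{pro} (numerically lc $\Rightarrow$ lc) is invoked to get both log canonicity and $\mathbb R$-Cartierness at once.

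The abundance part is where your sketch diverges from the paper and becomes unconvincing. You split on $\nu$; the paper first proves \emph{non-vanishing} $\kappa\ge 0$ (Theorem~\ref{thm-non}) and then splits on $\kappa$. This is not merely cosmetic: your $\nu=1$ case needs $\kappa=1$, and your $\nu=0$ case needs $\kappa\ge 0$, and neither is automatic---together they contain the full content of Theorems~\ref{thm-non} and~\ref{final}, which are the hardest parts of the paper and rely on the $\mathbb Q$-factoriality of $X$ in a specific way (Lemma~\ref{lem-fac}) and on Sakai's classification. ``A Zariski-decomposition analysis in the spirit of Fujita'' does not locate this work. For $\nu=2$, your proposal to ``reduce to the log canonical case using that the non-lc locus is finite'' is not a strategy the paper uses and is not clearly viable as stated: one cannot simply excise the non-lc points. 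The paper instead proves semi-ampleness directly for $\mathbb Q$-factorial log surfaces (Theorem~\ref{41}) by showing that the base locus avoids all non-klt centers, using vanishing theorems (\ref{vani}) that tolerate a zero-dimensional non-lc locus. Finally, you do not address the passage from $\mathbb Q$-divisors to $\mathbb R$-divisors; the paper handles this separately (Theorem~\ref{rdiv}) via Shokurov's rational-polytope argument, writing $K_X+\Delta$ as a convex combination of nef $K_X+\Delta_i$ with $\Delta_i$ rational.
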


As a special case of Theorem \ref{main-ka},
we obtain a generalization of Fujita's result in \cite{fujita}, where
$X$ is assumed to be smooth.

\begin{cor}[{cf.~\cite{fujita}}]
Let $X$ be a normal projective surface defined over $\mathbb C$ and
let $\Delta$ be an effective $\mathbb Q$-divisor on $X$ such that
every coefficient of $\Delta$ is less than or equal to one.
Assume that $X$ is $\mathbb Q$-factorial and
$K_X+\Delta$ is pseudo-effective.
Then the semi-positive part of the Zariski decomposition of $K_X+\Delta$ is semi-ample. 
In particular, if $K_X+\Delta$ is nef, then it is semi-ample. 
\end{cor}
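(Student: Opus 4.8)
The strategy is to run the log minimal model program provided by Theorem~\ref{main-ka} in Case~(A) and to recognize its output as the Zariski decomposition of $K_X+\Delta$. Since $X$ is $\mathbb Q$-factorial and $K_X+\Delta$ is pseudo-effective, Theorem~\ref{main-ka} gives a sequence of extremal contractions
$$
(X, \Delta)=(X_0, \Delta_0)\overset{\varphi_0}\to (X_1, \Delta_1)\overset{\varphi_1}\to\cdots\overset{\varphi_{k-1}}\to (X_k, \Delta_k)=(X^*, \Delta^*)
$$
with $K_{X^*}+\Delta^*$ semi-ample; no Mori fiber space can occur at the end because $K_X+\Delta$ is pseudo-effective. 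Each $\varphi_i$ is a projective birational morphism contracting a single irreducible curve $F_i$ with $F_i^2<0$, and each $X_i$ is $\mathbb Q$-factorial, so the composite $f:=\varphi_{k-1}\circ\cdots\circ\varphi_0\colon X\to X^*$ is a projective birational morphism between $\mathbb Q$-factorial surfaces.

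Write $K_X+\Delta=f^*(K_{X^*}+\Delta^*)+E$. For each $i$, define $e_i$ by $K_{X_i}+\Delta_i=\varphi_i^*(K_{X_{i+1}}+\Delta_{i+1})+e_iF_i$; intersecting with $F_i$ gives $e_iF_i^2=(K_{X_i}+\Delta_i)\cdot F_i<0$, hence $e_i>0$. Pushing these relations forward and adding them, we see that $E$ is an effective $f$-exceptional $\mathbb Q$-divisor. Put $P:=f^*(K_{X^*}+\Delta^*)$, so that $K_X+\Delta=P+E$. Then $P$ is nef (it is the pullback of the semi-ample, hence nef, divisor $K_{X^*}+\Delta^*$); $E\ge 0$; the intersection matrix of the irreducible components of $E$ is negative definite, since $E$ is $f$-exceptional on a surface; and $P\cdot E_j=(K_{X^*}+\Delta^*)\cdot f_*E_j=0$ for every irreducible component $E_j$ of $E$. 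By the uniqueness of the Zariski decomposition of the pseudo-effective divisor $K_X+\Delta$, the divisor $P$ is its semi-positive part and $E$ is its negative part.

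It remains to observe that $P$ is semi-ample. Choose a positive integer $m$ such that $m(K_{X^*}+\Delta^*)$ is base point free, defining a morphism $g\colon X^*\to\mathbb P^N$; then $mP=m f^*(K_{X^*}+\Delta^*)=(g\circ f)^*\mathcal O_{\mathbb P^N}(1)$ is base point free, so $P$ is semi-ample. This proves the first assertion. For the last one, if $K_X+\Delta$ is nef there is no $(K_X+\Delta)$-negative extremal ray, so the program terminates immediately with $(X^*,\Delta^*)=(X,\Delta)$ and $K_X+\Delta$ semi-ample by part~(1) of Theorem~\ref{main-ka}; equivalently, a nef divisor has trivial Zariski decomposition and so coincides with its semi-positive part.

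Granting Theorem~\ref{main-ka}, the argument is essentially formal; the one point that deserves attention is the verification that the decomposition $K_X+\Delta=P+E$ extracted from the minimal model program satisfies the defining properties of the Zariski decomposition---namely the negative definiteness of the support of $E$ and the orthogonality $P\cdot E=0$---so that uniqueness of the Zariski decomposition applies and identifies $P$ with the semi-positive part.
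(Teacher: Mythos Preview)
Your proof is correct and follows essentially the same route as the paper: run the log minimal model program in Case~(A), write $K_X+\Delta=f^*(K_{X^*}+\Delta^*)+E$ with $E$ effective and $f$-exceptional, identify this with the Zariski decomposition, and invoke the semi-ampleness of $K_{X^*}+\Delta^*$ (this is exactly the content of Remark~\ref{Z} in the paper). One small wording slip: you should be \emph{pulling back} the relations $K_{X_i}+\Delta_i=\varphi_i^*(K_{X_{i+1}}+\Delta_{i+1})+e_iF_i$ to $X$, not pushing them forward, but the conclusion that $E$ is effective and $f$-exceptional is of course correct.
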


The following result is a corollary of Theorem \ref{main-ka}. 
It is because $X$ is $\mathbb Q$-factorial if $X$ has only rational singularities. 

\begin{cor}[{cf.~Corollary \ref{44}}]\label{1212}  
Let $X$ be a projective surface with only rational singularities. 
Then the canonical ring 
$$
R(X)=\bigoplus_{m\geq 0} H^0(X, \mathcal O_X(mK_X))
$$ 
is a finitely generated $\mathbb C$-algebra. 
\end{cor}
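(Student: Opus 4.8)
The plan is to deduce this from Theorem~\ref{main-ka}, Case~(A), applied with $\Delta=0$, together with one classical fact about surface singularities. First I would dispose of the trivial case: if $K_X$ is not pseudo-effective then no positive multiple of $K_X$ is effective, so $H^0(X,\mathcal O_X(mK_X))=0$ for all $m>0$ and $R(X)=\mathbb C$; hence we may assume $K_X$ is pseudo-effective. Next, as the text indicates, I would invoke the classical theorem of Mumford and Lipman that a normal projective surface with only rational singularities is $\mathbb Q$-factorial: for a resolution $\pi\colon\widetilde X\to X$ the intersection form on the $\pi$-exceptional curves is negative definite, and rationality forces every Weil divisor to become $\mathbb Q$-Cartier. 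Thus $X$ satisfies hypothesis~(A) of Theorem~\ref{main-ka}, even though $(X,0)$ need not be log canonical.

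Applying Theorem~\ref{main-ka}~(A) to $(X,0)$, and using that $K_X$ is pseudo-effective, I obtain a sequence of extremal \emph{birational} contractions
$$
X=X_0\overset{\varphi_0}\to X_1\overset{\varphi_1}\to\cdots\overset{\varphi_{k-1}}\to X_k=X^*
$$
with every $X_i$ again $\mathbb Q$-factorial (so that the program can be continued) and with $K_{X^*}$ semi-ample. No Mori fiber space occurs because $K_X$ is pseudo-effective, so all the $\varphi_i$ are genuine birational morphisms of normal surfaces.

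To conclude, I would use two standard facts. For each $\varphi_i\colon X_i\to X_{i+1}$ one has $K_{X_{i+1}}=(\varphi_i)_*K_{X_i}$, hence $(\varphi_i)_*\mathcal O_{X_i}(mK_{X_i})=\mathcal O_{X_{i+1}}(mK_{X_{i+1}})$ and therefore $H^0(X_i,\mathcal O_{X_i}(mK_{X_i}))\cong H^0(X_{i+1},\mathcal O_{X_{i+1}}(mK_{X_{i+1}}))$ for all $m\ge0$; composing gives an isomorphism of graded $\mathbb C$-algebras $R(X)\cong R(X^*)$. Finally, since $K_{X^*}$ is a semi-ample $\mathbb Q$-Cartier divisor, some multiple $m_0K_{X^*}$ is base point free and is the pullback of an ample divisor under a morphism $X^*\to Y$, from which one gets that the section ring of $K_{X^*}$, in particular $R(X^*)$, is a finitely generated $\mathbb C$-algebra; hence so is $R(X)$. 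I do not expect a serious obstacle here: given Theorem~\ref{main-ka} the only points requiring care are checking that rational surface singularities fall under Case~(A), the routine invariance of the canonical ring along the $K_X$-MMP, and the elementary passage from semi-ampleness of $K_{X^*}$ to finite generation. (Equivalently, once $X$ is known to be $\mathbb Q$-factorial one may simply quote Corollary~\ref{44}.)
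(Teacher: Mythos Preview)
Your proposal is correct and follows essentially the same route as the paper: rational singularities $\Rightarrow$ $\mathbb Q$-factorial, then apply the MMP and abundance from Theorem~\ref{main-ka} (equivalently Theorem~\ref{43}/Corollary~\ref{44}), and read off finite generation from semi-ampleness on the minimal model together with the invariance of the canonical ring along the run. The only place I would tighten the write-up is the justification of $(\varphi_i)_*\mathcal O_{X_i}(mK_{X_i})=\mathcal O_{X_{i+1}}(mK_{X_{i+1}})$: the implication does not follow merely from $K_{X_{i+1}}=(\varphi_i)_*K_{X_i}$, but from the fact that each $\varphi_i$ is $K$-negative, so $K_{X_i}=\varphi_i^*K_{X_{i+1}}+aE_i$ with $a>0$, which is exactly what makes sections of $mK_{X_{i+1}}$ pull back to sections of $mK_{X_i}$.
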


Furthermore, if $K_X$ is big in Corollary \ref{1212}, 
then we can prove 
that the {\em{canonical model}} 
$$Y=\Proj \bigoplus_{m\geq 0}H^0(X, \mathcal O_X(mK_X))$$ of $X$ 
has only rational singularities (cf.~Theorem \ref{atara}). 
Therefore, the notion of rational singularities is appropriate 
to the minimal model theory for 
log surfaces. 

We note that the general classification theory of 
algebraic surfaces is due essentially to 
the Italian school, and has been worked out in detail by Kodaira, 
in Shafarevich's seminar, and so on. The theory of 
log surfaces was studied by Iitaka, 
Kawamata, Miyanishi, Sakai, Fujita, and many others. 
See, for example, \cite{miyanishi} and \cite{sakai-ne}. 
Our viewpoint is more minimal-model-theoretic 
than any other works. We do not use the notion of 
{\em{Zariski decomposition}} in this paper 
(see Remark \ref{Z}). 

Let us emphasize the major differences 
between traditional 
arguments for log and normal surfaces (cf.~\cite{miyanishi}, 
\cite{sakai1}, and \cite{sakai-ne}) 
and our new framework discussed in this paper. 

\begin{sa}[Intersection pairing in the sense of Mumford]
Let $X$ be a normal projective surface and 
let $C_1$ and $C_2$ be curves on 
$X$. 
It is well known that 
we can define the intersection number 
$C_1\cdot C_2$ in the sense of Mumford without assuming 
that $C_1$ or $C_2$ is $\mathbb Q$-Cartier. 
However, in this paper, 
we only consider the intersection 
number $C_1\cdot C_2$ under the assumption that 
$C_1$ or $C_2$ is $\mathbb Q$-Cartier. 
This is a key point of the minimal model 
theory for surfaces 
from the viewpoint of 
Mori theory. 
\end{sa}

\begin{sa}[Contraction theorems by Grauert and Artin] 
Let $X$ be a normal projective surface and 
let $C_1, \cdots, C_n$ be irreducible curves on $X$ such that 
the intersection matrix $(C_i\cdot C_j)$ is negative definite. 
Then we have a  contraction 
morphism 
$f:X\to Y$ 
which contracts $\bigcup _i C_i$ to a finite number of 
normal points. 
It is a well known and very 
powerful contraction theorem which follows 
from results by Grauert and Artin (see, for example, 
\cite[Theorem 14.20]{bad}). 
In this paper, we do not use this type of 
contraction theorem. 
A disadvantage of the above contraction theorem is that 
$Y$ is not always projective. 
In general, $Y$ is only an algebraic space. 
Various experiences show that 
$Y$ sometimes has pathological 
properties.  
We only consider contraction morphisms associated 
to negative extremal rays of the Kleiman--Mori cone 
$\overline {NE}(X)$. 
In this case, $Y$ is necessarily projective. 
It is very natural from the viewpoint of the higher dimensional 
log minimal model program. 
\end{sa}

\begin{sa}[Zariski decomposition] 
Let $X$ be a smooth 
projective surface 
and let $D$ be a pseudo-effective 
divisor on $X$. 
Then we can decompose 
$D$ as follows. 
$$
D=P+N
$$
The negative part $N$ is an effective $\mathbb Q$-divisor and either 
$N=0$ or the intersection matrix of the irreducible 
components of $N$ is negative definite, and the semi-positive 
part $P$ is nef and the intersection of $P$ with 
each irreducible component of $N$ is zero. 
The Zariski decomposition 
played crucial roles in the studies of 
log and normal surfaces. 
In this paper, we do not use Zariski decomposition. 
Instead, we run the log minimal model program 
because we are mainly interested in adjoint divisors 
$K_X+\Delta$ and have a 
powerful framework of the log minimal model program. 
In our case, if $K_X+\Delta$ is pseudo-effective, 
then we have a contraction morphism $f:X\to X'$ such that 
\begin{align*}
K_X+\Delta=f^*(K_{X'}+\Delta')+E\tag{$\spadesuit$}
\end{align*}
where 
$K_{X'}+\Delta'$ is nef, and $E$ is effective and 
$f$-exceptional. Of course, $(\spadesuit)$ is the 
Zariski decomposition of $K_X+\Delta$. 
We think that it is more natural and easier to 
treat $K_{X'}+\Delta'$ on $X'$ than 
$f^*(K_X+\Delta)$ on $X$. 
\end{sa}

\begin{sa}[On Kodaira type vanishing theorems]
Let $X$ be a smooth projective surface and 
let $D$ be a simple normal crossing 
divisor on $X$. 
In the traditional arguments, $\mathcal O_X(K_X+D)$ 
was recognized to be $\Omega^2_X(\log D)$. 
For our vanishing theorems which play important roles in this 
paper, we have to 
recognize $\mathcal O_X(K_X+D)$ as 
$\mathcal H om _{\mathcal O_X}(\mathcal O_X(-D), \mathcal O_X(K_X))$ 
and $\mathcal O_X(-D)$ as the $0$-th term of $\Omega^{\bullet}_X(\log D)\otimes 
\mathcal O_X(-D)$. 
For details, 
see \cite[Section 5]{fujino2}, 
\cite[Chapter 2]{fujino3}, and \cite{fujino6}. 
The reader can find our philosophy of vanishing theorems for 
the log minimal model program in \cite[Section 3]{fujino2}. 
\end{sa}

\begin{sa}[$\mathbb Q$-factoriality]
In our framework, $\mathbb Q$-factoriality will play crucial 
roles. 
For surfaces, $\mathbb Q$-factoriality 
seems to be more useful than we expected. 
See Lemma \ref{lem-fac} 
and Theorem \ref{thm53}. 
The importance of $\mathbb Q$-factoriality will be 
clarified in the minimal model theory of log surfaces 
in positive characteristic. 
For details, see \cite{tanaka}.
\end{sa}

Anyway, this paper gives a new framework for 
the study of log and normal surfaces. 

We summarize the contents of this paper. 
Section \ref{sec2} collects some preliminary results. 
In Section \ref{sec3}, we discuss the log minimal model program for 
log surfaces. 
It is a direct consequence of the cone and contraction theorem 
for log varieties (cf.~\cite[Theorem 1.1]{fujino2}). 
In Section \ref{sec4}, we show the finite generation of log canonical 
rings for log surfaces. More precisely, 
we prove a special case of the log abundance theorem 
for log surfaces. 
In Section \ref{sec5}, 
we treat the non-vanishing theorem for log surfaces. 
It is an important step of the log abundance theorem for log surfaces. 
In Section \ref{sec6}, 
we prove the log abundance theorem for log surfaces. 
It is a generalization of Fujita's main result in \cite{fujita}. 
Section \ref{sec7} is a supplementary section. 
We prove the finite generation of log canonical rings 
and the log abundance theorem for log surfaces in the relative setting. 
In Section \ref{new-sec8}, 
we generalize the relative log abundance theorem 
in Section \ref{sec7} for $\mathbb R$-divisors. 
Consequently, Theorem \ref{main-ka} also holds in the relative setting. 
In Section \ref{sec8}:~Appendix, we prove the 
base point free theorem for log surfaces 
in full generality (cf.~Theorem \ref{bpf}), 
though it is not necessary for the log minimal 
model theory for log surfaces discussed 
in this paper. 
It generalizes Fukuda's base point free theorem for 
log canonical surfaces (cf.~\cite[Main Theorem]{fukuda}). 
Our proof is different from 
Fukuda's and depends on the theory of {\em{quasi-log 
varieties}} (cf.~\cite{ambro}, \cite{fujino3}, 
and \cite{fujino-s}). 

We will work over $\mathbb C$, the complex number field, 
throughout this paper. 
Our arguments heavily depend on a Kodaira type vanishing theorem 
(cf.~\cite{fujino2}). 
So, we can not directly apply them in positive characteristic. 
We note that \cite{fujita} and \cite{koko} 
treat algebraic surfaces defined over an algebraically 
closed field in {\em{any}} characteristic. 
Recently, Hiromu Tanaka establishes the minimal model theory 
of log surfaces in positive characteristic 
(see \cite{tanaka}). 
Simultaneously, he slightly simplifies and generalizes 
some arguments in this paper (cf.~Theorem \ref{thm53} and Remark \ref{rem-ta}). 
Consequently, all the results in this paper hold 
over any algebraically closed field of characteristic zero. 
We have to be careful when we 
use the Lefschetz principle because $\mathbb Q$-factoriality 
is not necessarily preserved by field extensions (cf.~Remark \ref{rem65}). 

\begin{ack}
The author would like to 
thank Professors Takao Fujita and Fumio Sakai. 
He was partially supported by The Inamori Foundation and by the 
Grant-in-Aid for Young Scientists (A) $\sharp$20684001 from JSPS. 
He thanks Takeshi Abe and Yoshinori 
Gongyo for comments and discussions. 
He also thanks Professor Shigefumi Mori for useful 
comments, discussions, and warm encouragement. 
Finally, he thanks Hiromu Tanaka for stimulating discussions. 
\end{ack}

\section{Preliminaries}\label{sec2}

We collect some basic definitions and results. 
We will freely use the notation and terminology 
in \cite{km} and \cite{fujino2} throughout this paper.

\begin{sa}[$\mathbb Q$-divisors and $\mathbb R$-divisors] 
Let $X$ be a normal variety. 
For an $\mathbb R$-divisor $D=\sum _{j=1}^rd_j D_j$ on $X$ such that 
$D_j$ is a prime divisor for every $j$ and 
$D_i\ne D_j$ for $i\ne j$, 
we define the {\em{round-down}} $\llcorner D\lrcorner 
=\sum _{j=1}^r\llcorner d_j\lrcorner 
D_j$ (resp.~{\em{round-up}} 
$\ulcorner D\urcorner=\sum _{j=1}^{r}\ulcorner 
d_j\urcorner D_j$), where 
for every real number $x$, $\llcorner 
x\lrcorner$ (resp.~$\ulcorner x\urcorner$) 
is the integer defined by $x-1<\llcorner x\lrcorner 
\leq x$ (resp.~$\ulcorner x\urcorner=-\llcorner -x\lrcorner$). 
The {\em{fractional part}} $\{D\}$ of $D$ denotes 
$D-\llcorner D\lrcorner$. We define 
$$
D^{>a}=\sum _{d_j>a}d_j D_j, \quad D^{<a}=\sum _{d_j<a}d_jD_j 
$$
and 
$$
D^{=a}=\sum _{d_j=a}d_j D_j=a\sum _{d_j=a}D_j 
$$
for any real number $a$. 
We call $D$ a {\em{boundary}} 
$\mathbb R$-divisor if $0\leq d_j \leq 1$ for 
every $j$. 
We note that 
$\sim _{\mathbb Q}$ (resp.~$\sim _{\mathbb R}$) denotes 
the {\em{$\mathbb Q$-linear equivalence}} (resp.~{\em{$\mathbb R$-linear 
equivalence}}) of $\mathbb Q$-divisors (resp.~$\mathbb R$-divisors). 
Of course, $\sim$ (resp.~$\equiv$) 
denotes the usual {\em{linear equivalence}} (resp.~{\em{numerical 
equivalence}}) of divisors. 

Let $f:X\to Y$ be a morphism and let $B$ be a Cartier divisor on $X$. 
We say that $B$ is {\em{linearly $f$-trivial}} 
(denoted by $B\sim _f 0$) 
if and only if there is a Cartier divisor $B'$ on $Y$ such that 
$B\sim f^*B'$. 
Two $\mathbb R$-Cartier $\mathbb R$-divisors $B_1$ and $B_2$ on $X$ are 
called {\em{numerically $f$-equivalent}} 
(denoted by $B_1\equiv _f B_2$) if and 
only if $B_1\cdot C=B_2\cdot C$ for every curve $C$ such that 
$f(C)$ is a point. 

We say that $X$ is {\em{$\mathbb Q$-factorial}} 
if every prime Weil divisor on $X$ is $\mathbb Q$-Cartier. 
The following lemma is well known. 

\begin{lem}[Projectivity]\label{222} 
Let $X$ be a complete normal $\mathbb Q$-factorial 
algebraic surface. 
Then 
$X$ is projective. 
More precisely, 
a normal $\mathbb Q$-factorial algebraic surface 
is always quasi-projective. 
\end{lem}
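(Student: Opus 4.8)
The plan is to reduce to the classical fact that a normal complete surface admitting enough Cartier divisors to separate points and tangent directions is projective, and to manufacture such divisors from the $\mathbb{Q}$-factoriality hypothesis. First I would recall (by Chow's lemma, or by Nagata's theorem together with resolution of singularities for surfaces) that there is a projective birational morphism $f\colon \widetilde{X}\to X$ from a smooth projective surface $\widetilde{X}$. The exceptional locus of $f$ is a finite union of curves $E=\bigcup_i E_i$ contracted to finitely many points of $X$; since $\widetilde{X}$ is a smooth surface, the intersection matrix $(E_i\cdot E_j)$ is negative definite on each connected component of $E$ (this is the standard fact underlying the Grauert–Artin contraction criterion, recalled in the ``Contraction theorems by Grauert and Artin'' discussion above).

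The key step is to produce an $f$-ample $\mathbb{Q}$-divisor supported on the exceptional locus, and here is where $\mathbb{Q}$-factoriality enters. Pick any ample Cartier divisor $H$ on $\widetilde{X}$. Its pushforward $f_*H$ is a Weil divisor on $X$, hence, by $\mathbb{Q}$-factoriality, $\mathbb{Q}$-Cartier; set $A:=f^*f_*H$. Then $A$ and $H$ agree outside the exceptional locus, so $H-A$ is an $\mathbb{Q}$-divisor supported on $\bigcup_i E_i$, say $H-A\equiv \sum a_i E_i$ with the $a_i$ determined, up to the usual sign conventions, by the negative-definite system $(H-A)\cdot E_j = \sum_i a_i (E_i\cdot E_j)$. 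Because $H$ is ample (so $H\cdot E_j>0$) and $A=f^*(\text{something})$ is $f$-trivial (so $A\cdot E_j=0$), we get $(H-A)\cdot E_j>0$ for all $j$; negative definiteness of $(E_i\cdot E_j)$ then forces every $a_i<0$, i.e.\ $N:=-(H-A)=\sum(-a_i)E_i$ is an effective $\mathbb{Q}$-divisor whose support is exactly the exceptional locus and which satisfies $N\cdot E_j<0$ for every component. Standard toric/surface linear algebra (the same negative-definiteness argument) then shows $-N$ is $f$-ample: for any curve $C$ contracted by $f$, writing $C$ as a non-negative combination of the $E_i$ and using $N\cdot E_j<0$ gives $-N\cdot C>0$.

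Now $A = f^*f_*H$ is $\mathbb{Q}$-Cartier and ample on $X$? Not quite — I only know $f_*H$ is $\mathbb{Q}$-Cartier, not ample — so instead I would argue directly that $H = A - N = f^*(f_*H) - N$ with $-N$ $f$-ample and $f_*H$ $\mathbb{Q}$-Cartier; since $H$ is ample on $\widetilde{X}$ and $f$ is proper birational, I want to descend ampleness. Concretely, for $m\gg 0$ the $\mathbb{Q}$-divisor $m\,f_*H$ is an honest Cartier divisor $D$ on $X$ with $f^*D = mH + mN$; since $-N$ is $f$-ample and $H$ is ample, $mH+mN$ is ample on $\widetilde{X}$ for suitable $m$ (ample plus $f$-ample stays ample after rescaling, by Kleiman's criterion applied on $\widetilde{X}$: a class positive on $\overline{NE}(\widetilde X)$ stays positive under small perturbation, and $f^*(\text{ample on }X$-to-be$)$ dominates $mN$). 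Then $f^*D$ ample on $\widetilde{X}$ together with $f$ finite birational (in particular $f$ is an isomorphism in codimension one and $\mathcal{O}_X\to f_*\mathcal{O}_{\widetilde X}$ is an isomorphism since $X$ is normal) forces $D$ to be ample on $X$ by the standard descent of ampleness along a proper surjection ($\mathcal{O}_X$-module argument / Nakai–Moishezon: $D^2=\tfrac{1}{m^2}(f^*D)^2>0$ and $D\cdot\Gamma = \tfrac1m f^*D\cdot \widetilde\Gamma>0$ for every curve $\Gamma\subset X$ with strict transform $\widetilde\Gamma$). Hence $X$ carries an ample Cartier divisor and is projective. The ``more precisely'' clause follows because the argument never used completeness beyond properness of $f$: the same construction on any normal $\mathbb{Q}$-factorial surface (not assumed complete) produces an ample-over-$X$ situation, and local quasi-projectivity of surfaces combined with $\mathbb{Q}$-factoriality upgrades this to quasi-projectivity — alternatively one cites that a normal surface is always quasi-projective once one knows it is covered by affines whose glueing carries a relatively ample line bundle, which is exactly what the Cartier divisor $D$ above provides.

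The main obstacle I anticipate is the descent-of-ampleness step: making precise that ampleness of $f^*D$ on the resolution, together with $f$ being a birational contraction from a smooth projective surface, yields ampleness of $D$ on the possibly-singular surface $X$. On surfaces this is clean via Nakai–Moishezon (one only has to check positivity of $D^2$ and of $D\cdot\Gamma$ for the finitely-many-types of curves $\Gamma$, and these pull back correctly because $f$ is an isomorphism in codimension one), but some care is needed for curves $\Gamma$ contracted by nothing yet passing through the singular points of $X$. Everything else — existence of the resolution, negative-definiteness of exceptional configurations, the sign argument producing effective $N$ with $-N$ $f$-ample — is classical surface theory.
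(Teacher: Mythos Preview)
Your overall strategy---resolve by a smooth projective surface, push forward an ample divisor, and verify Nakai--Moishezon on $X$---is exactly the paper's approach, but you bury the one-line argument under a detour that contains genuine errors. The claim that $f^*D$ is ample on $\widetilde{X}$ is false: $f^*D=m\,f^*(f_*H)=m(H+N)$ is the pullback of a divisor on $X$, hence numerically trivial on every exceptional curve $E_j$ (indeed $(H+N)\cdot E_j = A\cdot E_j = 0$ by your own computation), so it cannot be ample. Relatedly, $f$ is not ``finite birational'': it contracts curves. These are not minor slips---they are the main line of your argument.

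What actually works is the Nakai--Moishezon verification you tuck into a parenthesis at the end, and this is the \emph{entire} content of the paper's proof, with no need for the $N$ and $f$-ampleness discussion. The paper simply observes that $A:=f_*H$ is $\mathbb Q$-Cartier by $\mathbb Q$-factoriality and then computes, for any curve $C\subset X$, that $A\cdot C = H\cdot f^*C>0$ by the projection formula (here $f^*C$ is the honest pullback of the $\mathbb Q$-Cartier divisor $C$, an effective $1$-cycle, so pairing with the ample $H$ is positive). Together with $A^2>0$ this gives ampleness of $A$ by Nakai's criterion on a complete scheme. All of your work constructing $N$ is unnecessary: you never need to know anything about the exceptional configuration beyond the projection formula.

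For the quasi-projectivity clause your sketch is too vague. The paper's argument is different and cleaner: since a normal surface has only isolated singularities, one can directly build a complete normal $\mathbb Q$-factorial compactification $\overline X\supset X$ and then apply the projectivity statement to $\overline X$.
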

\begin{proof}
Let $X$ be a normal $\mathbb Q$-factorial algebraic surface. 
Then it is easy to construct a complete normal $\mathbb Q$-factorial 
algebraic surface $\overline X$ which contains $X$ as a Zariski open subset. 
It is because $X$ has only isolated singularities. 
So, from now on, we treat a complete normal $\mathbb Q$-factorial 
algebraic surface. 
Let $f:Y\to X$ be a projective birational morphism from 
a smooth projective surface $Y$. 
Let $H$ be an effective general ample Cartier divisor on $Y$. 
We consider the effective $\mathbb Q$-Cartier 
Weil divisor $A=f_*H$ on $X$. 
Then $A\cdot C=H\cdot f^*C>0$ for every 
curve $C$ on $X$. 
Therefore, $A$ is ample by Nakai's criterion. 
Thus, $X$ is projective. 
\end{proof}

By the following example, we know that 
$\mathbb Q$-factoriality of a surface is weaker than 
the condition that the surface has only rational singularities. 

\begin{ex}
We consider 
$$
X=\Spec\,  \mathbb C[X_1, X_2, X_3]/(X_1^{e_1}+X_2^{e_2}+X_3^{e_3})
$$
where $e_1, e_2$, and $e_3$ are positive integers such that 
$1<e_2<e_2<e_3$ and 
$(e_i, e_j)=1$ for $i\ne j$. 
Then $X$ is factorial, that is, every Weil divisor on $X$ is Cartier 
(see, for example, \cite[Theorem 5.1]{morimori}). 
If $(e_1, e_2, e_3)=(2, 3, 5)$, 
then $X$ has a singular point of $E_8$ type. Therefore, 
$X$ has a rational Gorenstein singularity. 
If $(e_1, e_2, e_3)\ne (2, 3, 5)$, then 
the singularity of $X$ is not rational. 
Therefore, there are many normal ($\mathbb Q$-)factorial 
surfaces whose singularities are not rational. 
\end{ex}
\end{sa} 

\begin{sa}[Singularities of pairs]
Let $X$ be a normal variety and let $\Delta$ be 
an effective $\mathbb R$-divisor on $X$ 
such that $K_X+\Delta$ is $\mathbb R$-Cartier. 
Let $f:Y\to X$ be a resolution such that 
$\Exc (f)\cup f^{-1}_*\Delta$ has simple normal crossing support, 
where $\Exc (f)$ is the {\em{exceptional locus}} 
of $f$ and $f^{-1}_*\Delta$ is 
the {\em{strict transform}} of $\Delta$ on $Y$. 
We can write 
$$
K_Y=f^*(K_X+\Delta)+\sum _i a_i E_i.  
$$ 
We say that $(X, \Delta)$ is {\em{log canonical}} ({\em{lc}}, for short)  
if $a_i\geq -1$ for every $i$. 
We say that $(X, \Delta)$ is {\em{Kawamata log terminal}} 
({\em{klt}}, for short) if $a_i>-1$ for every $i$. 
We usually write $a_i=a(E_i, X, \Delta)$ and call it 
the {\em{discrepancy coefficient}} of $E_i$ with respect to 
$(X, \Delta)$. 
We note that $\Nklt (X, \Delta)$ (resp.~$\Nlc (X, \Delta)$) 
denotes the image of $\sum _{a_i\leq -1} E_i$ (resp.~$\sum _{a_i<-1}E_i$) and 
is called the {\em{non-klt locus}} (resp.~{\em{non-lc locus}}) of $(X, \Delta)$. 
If there exist a resolution $f:Y\to X$ and a divisor $E$ on $Y$ such 
that $a(E, X, \Delta)=-1$ and 
that $f(E)\not\subset \Nlc (X, \Delta)$, then 
$f(E)$ is called a {\em{log canonical center}} 
({\em{lc center}}, for short) with respect to $(X, \Delta)$. 
If there exist a resolution $f:Y\to X$ and a divisor $E$ on $Y$ such 
that $a(E, X, \Delta)\leq -1$, 
then 
$f(E)$ is called a {\em{non-klt center}} with respect to $(X, \Delta)$. 

When $X$ is a surface, the notion of {\em{numerically 
log canonical}} and {\em{numerically dlt}} is sometimes 
useful. 
See \cite[Notation 4.1]{km} and Proposition \ref{pro} below. 
\end{sa}

\begin{sa}[Kodaira dimension and numerical Kodaira dimension]
We note that $\kappa$ (resp.~$\nu$) denotes 
the {\em{Iitaka--Kodaira dimension}} 
(resp.~{\em{numerical Kodaira dimension}}). 

Let $X$ be a normal projective variety, $D$ a $\mathbb Q$-Cartier 
$\mathbb Q$-divisor on $X$, 
and $n$ a positive integer such that 
$nD$ is Cartier. 
By definition, $\kappa (X, D)=-\infty$ if and only if 
$h^0(X, \mathcal O_X(mnD))=0$ for 
every $m>0$, and 
$\kappa (X, D)=k>-\infty$ if and only if 
$$
0<\underset{m>0}{\lim\sup} 
\frac{h^0(X, \mathcal O_X(mnD))}{m^k}<\infty. 
$$ 
We see that 
$\kappa (X, D)\in \{-\infty, 0, 1, \cdots, \dim X\}$. 
If $D$ is nef, then 
$$
\nu(X, D)=\max \{ e\in \mathbb Z_{\geq 0} \, | \, 
D^e \ \text{is not numerically zero}\}. 
$$ 
We say that $D$ is {\em{abundant}} if 
$\nu(X, D)=\kappa (X, D)$. 

Let $Y$ be a projective irreducible 
variety and let $B$ be a $\mathbb Q$-Cartier $\mathbb Q$-divisor 
on $Y$. 
We say that $B$ is {\em{big}} if $\nu^*B$ is {\em{big}}, 
that is, $\kappa (Z, \nu^*B)=\dim Z$, 
where $\nu:Z\to Y$ is the normalization of $Y$, 
\end{sa}
\begin{sa}[Nef dimension]\label{2525}
Let $L$ be a nef $\mathbb Q$-Cartier $\mathbb Q$-divisor 
on a normal projective variety 
$X$. Then $n(X, L)$ denotes the {\em{nef dimension}} of $L$. 
It is well known that $$\kappa (X, L)\leq \nu (X, L)\leq n(X, L). $$ 
For details, see \cite{8a}. 
We will use the {\em{reduction map}} associated to 
$L$ in Section \ref{sec6}. 

Let us quickly recall the reduction map and the nef dimension in \cite{8a}. 
By \cite[Theorem 2.1]{8a}, for a 
nef $\mathbb Q$-Cartier $\mathbb Q$-divisor 
$L$ on $X$, 
we can construct an almost holomorphic, dominant rational map 
$f:X\dashrightarrow Y$ with 
connected fibers, called a {\em{reduction map}} associated to 
$L$ such that 
\begin{itemize}
\item[(i)] $L$ is numerically trivial on all compact fibers 
$F$ of $f$ with $\dim F=\dim X-\dim Y$, and 
\item[(ii)] for every general point $x\in X$ and 
every irreducible curve $C$ passing 
through $x$ with $\dim f(C)>0$, we have $L\cdot C>0$. 
\end{itemize} 
The map $f$ is unique up to birational 
equivalence of $Y$. 
We define the {\em{nef dimension}} of $L$ 
as follows (cf.~\cite[Definition 2.7]{8a}):
$$
n(X, L):=\dim Y. 
$$   
\end{sa}
\begin{sa}[Non-lc ideal sheaves] 
The ideal sheaf 
$\mathcal J_{NLC}(X, \Delta)$ denotes the {\em{non-lc ideal sheaf}} 
associated to the pair $(X, \Delta)$. 
More precisely, let 
$X$ be a normal variety and let $\Delta$ be an 
effective $\mathbb R$-divisor on $X$ such 
that $K_X+\Delta$ is $\mathbb R$-Cartier. 
Let $f:Y\to X$ be a resolution 
such that 
$
K_Y+\Delta_Y=f^*(K_X+\Delta) 
$ and that 
$\Supp \Delta_Y$ is simple normal crossing. 
Then we have  
$$
\mathcal J_{NLC}(X, \Delta)=f_*\mathcal O_Y(-\llcorner 
\Delta_Y\lrcorner +\Delta^{=1}_Y)\subset \mathcal O_X. 
$$
For details, see, for example, \cite[Section 7]{fujino2}, 
\cite{fujino8}, or \cite{fst}. 
We note that 
$$
\mathcal J(X, \Delta)=
f_*\mathcal O_Y(-\llcorner \Delta_Y\lrcorner)\subset 
\mathcal O_X
$$ is 
the {\em{multiplier ideal sheaf}} associated to the pair $(X, \Delta)$. 
\end{sa}

\begin{sa}[a Kodaira type vanishing theorem]\label{vani} 
Let $f:X\to Y$ be a birational morphism from a smooth 
projective variety $X$ to a normal 
projective variety $Y$. 
Let $\Delta$ be a boundary $\mathbb Q$-divisor on $X$ such that 
$\Supp \Delta$ is a simple normal crossing divisor and 
let $L$ be a Cartier divisor on $X$. 
Assume that 
$$
L-(K_X+\Delta)\sim _{\mathbb Q}f^*H, 
$$ 
where $H$ is a nef and big $\mathbb Q$-Cartier 
$\mathbb Q$-divisor on $Y$ such that $H|_{f(C)}$ is big for every 
lc center $C$ of the pair $(X, \Delta)$. 
Then 
we obtain 
$$
H^i(Y, R^jf_*\mathcal O_X(L))=0
$$ 
for every $i>0$ and $j\geq 0$. 
It is a special case of \cite[Theorem 2.47]{fujino3}, 
which is the culmination of the works of several authors. 
We recommend \cite{fujino6} as an introduction to 
new vanishing theorems. 
\end{sa}

\begin{sa}Let $\Lambda$ be a linear system. 
Then $\Bs\Lambda$ denotes the {\em{base locus}} of $\Lambda$.  
\end{sa}

\section{Minimal model program for log surfaces}\label{sec3} 

Let us recall the notion of {\em{log surfaces}}. 

\begin{defn}[Log surfaces]
Let $X$ be a normal algebraic surface and let $\Delta$ 
be a boundary $\mathbb R$-divisor on 
$X$ such that $K_X+\Delta$ is $\mathbb R$-Cartier. 
Then the pair $(X, \Delta)$ is called a {\em{log surface}}. 
We recall that a {\em{boundary}} $\mathbb R$-divisor is 
an effective $\mathbb R$-divisor whose coefficients 
are less than or equal to one. 
\end{defn}

We note that we assume nothing on singularities of $(X, \Delta)$. 

From now on, we discuss the log minimal model program for log surfaces. 
The following cone and contraction 
theorem is a special case of \cite[Theorem 1.1]{fujino2}. 
For details, see \cite{fujino2}. 

\begin{thm}[{cf.~\cite[Theorem 1.1]{fujino2}}]\label{thm-cone}
Let $(X, \Delta)$ be a log surface and 
let $\pi:X\to S$ be a projective 
morphism onto an algebraic variety $S$. 
Then we have 
$$\overline {NE}(X/S)=\overline {NE}(X/S)_{K_X+\Delta\geq 0} 
+\sum R_j$$ 
with the following properties.  
\begin{itemize}
\item[(1)] 
$R_j$ is a $(K_X+\Delta)$-negative 
extremal ray of $\overline {NE}(X/S)$ for every $j$. 
\item[(2)] Let $H$ be a $\pi$-ample $\mathbb R$-divisor 
on $X$. 
Then there are only finitely many $R_j$'s included in 
$(K_X+\Delta+H)_{<0}$. In particular, 
the $R_j$'s are discrete in the half-space 
$(K_X+\Delta)_{<0}$. 
\item[(3)] 
Let $R$ be a $(K_X+\Delta)$-negative extremal 
ray of $\overline {NE}(X/S)$.  
Then there exists a contraction morphism $\varphi_R:X\to Y$ 
over $S$ with the following properties. 
\begin{itemize}
\item[(i)] 
Let $C$ be an integral curve on $X$ such that 
$\pi(C)$ is a point. 
Then $\varphi_R(C)$ is a point if and 
only if 
$[C]\in R$. 
\item[(ii)] $\mathcal O_Y\simeq (\varphi_R)_*\mathcal O_X$.  
\item[(iii)] Let $L$ be a line bundle on $X$ such 
that $L\cdot C=0$ for 
every curve $C$ with $[C]\in R$. 
Then there exists a line bundle $L_Y$ on $Y$ such 
that $L\simeq \varphi^*_RL_Y$. 
\end{itemize}
\end{itemize}
\end{thm}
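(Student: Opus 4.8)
The plan is to deduce everything from \cite[Theorem 1.1]{fujino2}, the general cone and contraction theorem for $\mathbb{R}$-divisors on normal varieties, by checking that a log surface $(X,\Delta)$ together with a projective morphism $\pi\colon X\to S$ falls within its hypotheses. First I would recall the setup of \cite{fujino2}: that theorem applies to a normal variety $X$ with an $\mathbb{R}$-Cartier divisor $D=K_X+\Delta$ where $\Delta$ is a boundary $\mathbb{R}$-divisor, \emph{without} any restriction on the singularities of the pair — precisely the generality in which a log surface is defined. So the cone decomposition $\overline{NE}(X/S)=\overline{NE}(X/S)_{K_X+\Delta\geq 0}+\sum R_j$ with properties (1) and (2), namely that each $R_j$ is a $(K_X+\Delta)$-negative extremal ray, that only finitely many $R_j$ lie in $(K_X+\Delta+H)_{<0}$ for a $\pi$-ample $\mathbb{R}$-divisor $H$, and the discreteness of the $R_j$ in $(K_X+\Delta)_{<0}$, are all immediate specializations of the corresponding statements in \cite[Theorem 1.1]{fujino2}.

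For part (3), the existence of the contraction morphism $\varphi_R\colon X\to Y$ over $S$ associated to a $(K_X+\Delta)$-negative extremal ray $R$, together with properties (i), (ii), (iii), is again the contraction part of \cite[Theorem 1.1]{fujino2} specialized to dimension two. The key technical input behind \cite{fujino2} is the Kodaira-type vanishing theorem recorded in~\ref{vani}, which is what makes the base-point-free-type arguments run; I would simply cite this rather than reprove it. Property (iii) (descent of line bundles numerically trivial on $R$) follows from the base point free theorem applied to a suitable twist, exactly as in the higher-dimensional theory, and property (ii) is the standard normality/Stein-factorization statement $\mathcal{O}_Y\simeq(\varphi_R)_*\mathcal{O}_X$ built into the construction.

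The one point worth emphasizing, which is really the reason the statement is clean for surfaces, is that no separate projectivity hypothesis on $Y$ is needed: because we contract only a $(K_X+\Delta)$-negative extremal ray of the relative Kleiman--Mori cone, the target $Y$ is automatically projective over $S$ — in contrast to the Grauert--Artin type contractions discussed in the introduction, where the contracted space can be a mere algebraic space. I would note this (and the relevant references, e.g.\ the discussion in \cite{fujino2}) but not belabor it.

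Honestly, there is no real obstacle here: the theorem as stated is explicitly flagged in the excerpt as ``a special case of \cite[Theorem 1.1]{fujino2},'' so the ``proof'' is a verification that the dimension-two log-surface setting satisfies the hypotheses of the cited general theorem, plus a remark on the relative cone version $\overline{NE}(X/S)$ — which is also covered there. The only thing that requires a sentence of care is making sure the reader sees that $\Delta$ need not be such that $(X,\Delta)$ has good singularities, and that this is genuinely allowed by \cite{fujino2}; this is the feature that later powers Case~(A) of Theorem~\ref{main-ka}.
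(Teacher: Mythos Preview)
Your approach matches the paper's: no proof is given there either, beyond the declaration that the statement is a special case of \cite[Theorem~1.1]{fujino2} and a pointer to that reference for details. However, you slightly misstate the scope of \cite[Theorem~1.1]{fujino2} when you assert it applies ``without any restriction on the singularities of the pair.'' The general cone and contraction theorem in \cite{fujino2} does carry a hypothesis involving the non-lc locus $\Nlc(X,\Delta)$, and the one substantive sentence the paper offers (immediately after the theorem statement) is precisely the verification of that hypothesis in dimension two: because $\Delta$ is a boundary, $\Nlc(X,\Delta)$ is zero-dimensional, so no curve on $X$ is contained in the non-lc locus. This is the actual content of the specialization to surfaces and is what guarantees the contraction part (3) goes through; you should include it, since without it the check that the hypotheses of \cite[Theorem~1.1]{fujino2} are satisfied is incomplete.
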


A key point is that the non-lc locus of a log surface $(X, \Delta)$ 
is zero-dimensional. 
So, there are no curves contained in the non-lc locus 
of $(X, \Delta)$. 
We will prove that $R_j$ in Theorem \ref{thm-cone} (1) is spanned 
by a rational curve $C_j$ with 
$-(K_X+\Delta)\cdot C_j\leq 3$ in Proposition \ref{2727} 
below. 

By Theorem \ref{thm-cone}, we can run the log minimal model 
program for log surfaces under some mild assumptions. 

\begin{thm}[Minimal model program for log surfaces]\label{thm32}
Let $(X, \Delta)$ be a log surface and 
let $\pi:X\to S$ be a projective morphism onto an algebraic 
variety $S$. 
We assume one of the following conditions{\em{:}} 
\begin{itemize}
\item[(A)] $X$ is $\mathbb Q$-factorial. 
\item[(B)] $(X, \Delta)$ is log canonical. 
\end{itemize}
Then, by {\em{Theorem \ref{thm-cone}}}, 
we can run the log minimal model program over $S$ with respect to 
$K_X+\Delta$.  
So, there is a sequence of at most $\rho (X/S)-1$ contractions 
$$
(X, \Delta)=(X_0, \Delta_0)\overset{\varphi_0}
\to (X_1, \Delta_1)\overset{\varphi_1}\to 
\cdots\overset{\varphi_{k-1}}\to (X_k, \Delta_k)=(X^*, \Delta^*)
$$
over $S$ such that one of the following holds{\em{:}}
\begin{itemize}
\item[(1)] {\em{(Minimal model)}} $K_{X^*}+\Delta^*$ is nef over $S$. 
In this case, $(X^*, \Delta^*)$ is called a {\em{minimal model}} of 
$(X, \Delta)$.  
\item[(2)] {\em{(Mori fiber space)}} There is a morphism $g:X^*\to C$ over 
$S$ such that $-(K_{X^*}+\Delta^*)$ is $g$-ample, $\dim C<2$, and 
$\rho (X^*/C)=1$. We sometimes call 
$g:(X^*, \Delta^*)\to C$ a {\em{Mori fiber space}}.  
\end{itemize}
We note that $X_i$ is $\mathbb Q$-factorial 
{\em{(}}resp.~$(X_i, \Delta_i)$ is 
lc{\em{)}} for every $i$ in 
{\em{Case (A)}} {\em{(}}resp.~{\em{(B)}}{\em{)}}. 
\end{thm}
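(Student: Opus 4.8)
The plan is to deduce Theorem \ref{thm32} directly from the cone and contraction theorem (Theorem \ref{thm-cone}), exactly as in the classical higher-dimensional MMP, the only subtlety being that we must check that the two hypotheses (A) $\mathbb Q$-factoriality and (B) log canonicity are each preserved under a single step of the program, so that the induction can be iterated. First I would set up the induction on the relative Picard number $\rho(X/S)$. If $K_X+\Delta$ is nef over $S$ we are in case (1) and there is nothing to do. Otherwise, by Theorem \ref{thm-cone}(1), there is a $(K_X+\Delta)$-negative extremal ray $R$ of $\overline{NE}(X/S)$, and by Theorem \ref{thm-cone}(3) it admits a contraction morphism $\varphi=\varphi_R\colon X\to Y$ over $S$ with $\varphi_*\mathcal O_X=\mathcal O_Y$.

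Next I would perform the standard case division according to $\dim Y$. If $\dim Y<2$, then $\varphi$ itself is a Mori fiber space: $-(K_X+\Delta)$ is $\varphi$-ample because $R$ is $(K_X+\Delta)$-negative and every curve contracted by $\varphi$ lies in $R$ by Theorem \ref{thm-cone}(3)(i), and $\rho(X/Y)=1$ since $\varphi$ contracts precisely the single extremal ray $R$; this is outcome (2). If $\dim Y=2$, then $\varphi$ is birational and contracts a single irreducible curve $E$ (in the surface case an extremal contraction with $\dim Y=2$ is a divisorial contraction of a $(K_X+\Delta)$-negative curve). Set $\Delta_Y=\varphi_*\Delta$; then $\Delta_Y$ is again a boundary $\mathbb R$-divisor. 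To see $(Y,\Delta_Y)$ is again a log surface one checks that $K_Y+\Delta_Y$ is $\mathbb R$-Cartier: in case (A) this holds because $Y$ is $\mathbb Q$-factorial (see below), and in case (B) one uses the negativity lemma together with $K_X+\Delta=\varphi^*(K_Y+\Delta_Y)+aE$ with $a=a(E,Y,\Delta_Y)$, the coefficient being determined by intersecting with $E$ and using $(K_X+\Delta)\cdot E<0$, which forces $a\ge 0$; since $(X,\Delta)$ is lc one gets $a(\cdot,Y,\Delta_Y)\ge a(\cdot,X,\Delta)\ge -1$ for every divisor, so $(Y,\Delta_Y)$ is lc. Then $\rho(Y/S)=\rho(X/S)-1$ and we replace $(X,\Delta)$ by $(Y,\Delta_Y)$ and iterate; the process terminates after at most $\rho(X/S)-1$ steps because $\rho$ strictly drops and the birational case cannot recur more than $\rho(X/S)-1$ times, landing us in outcome (1) or (2).

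The preservation of hypothesis (A) is the step I expect to require the most care, though it is standard: if $X$ is $\mathbb Q$-factorial and $\varphi\colon X\to Y$ is the contraction of an extremal ray with $\dim Y=2$, then $Y$ is again $\mathbb Q$-factorial. The argument is the usual one: given a prime Weil divisor $D_Y$ on $Y$, let $D$ be its strict transform; since $\rho(X/Y)=1$, some combination $D+cE$ is $\varphi$-numerically trivial (solve $(D+cE)\cdot E=0$ for $c$, using that $E^2<0$ so $E\cdot E\ne 0$), hence by Theorem \ref{thm-cone}(3)(iii), applied after passing to a multiple so that the relevant divisor becomes Cartier, $m(D+cE)$ descends to a Cartier divisor on $Y$, which must be $mD_Y$; therefore $D_Y$ is $\mathbb Q$-Cartier. (One must be a little careful that Theorem \ref{thm-cone}(3)(iii) is stated for line bundles, so I would phrase the descent in terms of an appropriate Cartier multiple and use $\mathbb Q$-factoriality of $X$ to clear denominators.) Finally, in case (A) one also notes $Y$ is projective by Lemma \ref{222}. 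This closes the induction in both cases and simultaneously records the asserted property that every $X_i$ is $\mathbb Q$-factorial, resp.\ every $(X_i,\Delta_i)$ is lc.
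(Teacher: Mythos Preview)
Your treatment of Case (A) and of the overall inductive structure is correct and matches the paper's approach (the paper simply cites \cite[Proposition 3.36]{km} for the preservation of $\mathbb Q$-factoriality).

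There is, however, a genuine gap in your Case (B). You write
\[
K_X+\Delta=\varphi^*(K_Y+\Delta_Y)+aE
\]
and then compare discrepancies. But writing $\varphi^*(K_Y+\Delta_Y)$ presupposes that $K_Y+\Delta_Y$ is $\mathbb R$-Cartier, which is precisely what is in doubt: in Case (B) the surface $X$ is \emph{not} assumed $\mathbb Q$-factorial, so neither is $Y$, and the exceptional curve $E$ need not be $\mathbb Q$-Cartier. Hence you cannot adjust $K_X+\Delta$ by a multiple of $E$ to make it $\varphi$-trivial and descend it via Theorem~\ref{thm-cone}(3)(iii). Your negativity/discrepancy comparison therefore only establishes that $(Y,\Delta_Y)$ is \emph{numerically} log canonical in the sense of Mumford's numerical pullback (cf.\ \cite[Notation 4.1]{km}); it does not by itself give that $K_Y+\Delta_Y$ is $\mathbb R$-Cartier, and thus not that $(Y,\Delta_Y)$ is lc.

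The paper closes this gap by invoking Proposition~\ref{pro} (a version of \cite[Proposition 4.11]{km}): for surfaces, numerically lc implies lc. This is a nontrivial, surface-specific statement whose proof uses the classification of numerically lc surface singularities (\cite[Theorem 4.7]{km}) together with a vanishing argument to show that $K_Y$ is $\mathbb Q$-Cartier. Once Proposition~\ref{pro} is available, your argument goes through; but as written you are missing exactly this ingredient.
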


\begin{proof}
It is obvious by Theorem \ref{thm-cone}. 
In Case (A), we can easily check 
that $X_i$ is $\mathbb Q$-factorial for 
every $i$ by the usual method (cf.~\cite[Proposition 3.36]{km}). 
In Case (B), we have to check that $(X_i, \Delta_i)$ is lc for 
$\Delta_i=\varphi_{i-1*}\Delta_{i-1}$. 
Since $-(K_{X_{i-1}}+\Delta_{i-1})$ is $\varphi_{i-1}$-ample, 
it is easy to see that $(X_i, \Delta_i)$ is 
numerically lc (cf.~\cite[Notation 4.1]{km}) 
by the negativity lemma. 
By Proposition \ref{pro} below, the pair 
$(X_i, \Delta_i)$ is log canonical. 
In particular, $K_{X_i}+\Delta_i$ is $\mathbb R$-Cartier. 
\end{proof}

As an application of Case (A) in Theorem \ref{thm32}, 
we obtain the following corollary. 

\begin{cor}\label{lem-q} 
Let $f:Y\to X$ be a projective birational morphism 
between normal surfaces. 
Let $\Delta_Y$ be an effective $\mathbb R$-divisor 
on $Y$ such that $\Supp \Delta_Y \subset \Exc(f)$ and $\llcorner 
\Delta_Y\lrcorner=0$. Assume that 
$Y$ is $\mathbb Q$-factorial 
and that $K_Y+\Delta_Y\equiv _f0$. Then 
$X$ is $\mathbb Q$-factorial. 
\end{cor}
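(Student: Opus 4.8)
The plan is to factor $f$ as a composition of extremal contractions terminating at $X$ and to invoke the fact --- the ``usual method'' of \cite[Proposition 3.36]{km}, already used in the proof of Theorem \ref{thm32} --- that $\mathbb{Q}$-factoriality is preserved under such a contraction; since $Y$ is $\mathbb{Q}$-factorial, this will force $X$ to be $\mathbb{Q}$-factorial. I would induct on $\rho(Y/X)$. If $\rho(Y/X)=0$ then $f$ contracts no curve, so $f$ is finite and birational onto the normal surface $X$, hence an isomorphism, and we are done. If $\rho(Y/X)\geq 1$, pick an irreducible curve $E\subset\Exc(f)$. The intersection matrix of the $f$-exceptional curves is negative definite --- a standard fact, reduced to the classical smooth case by pulling back to a resolution of $Y$ --- so $\overline{NE}(Y/X)$ is a simplicial cone whose extremal rays are the classes of the $f$-exceptional curves; in particular $R:=\mathbb{R}_{\geq 0}[E]$ is an extremal ray. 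Because $\llcorner\Delta_Y\lrcorner=0$, the pair $(Y,\Delta_Y+\varepsilon E)$ is a log surface for $0<\varepsilon\ll 1$, and since $(K_Y+\Delta_Y)\cdot E=0$ --- this is where $K_Y+\Delta_Y\equiv_f 0$ enters --- while $E^2<0$, the ray $R$ is $(K_Y+\Delta_Y+\varepsilon E)$-negative. Hence Theorem \ref{thm-cone} gives a contraction $\varphi\colon Y\to Y_1$ over $X$ with $Y_1$ $\mathbb{Q}$-factorial, and $f_1\colon Y_1\to X$ is birational with $\rho(Y_1/X)=\rho(Y/X)-1$.

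To keep the induction going I would check that $f_1\colon Y_1\to X$ and $\Delta_{Y_1}:=\varphi_*\Delta_Y$ satisfy the same hypotheses. That $\Supp\Delta_{Y_1}\subset\Exc(f_1)$ and $\llcorner\Delta_{Y_1}\lrcorner=0$ is clear. Since $Y_1$ is $\mathbb{Q}$-factorial, $K_{Y_1}+\Delta_{Y_1}$ is $\mathbb{R}$-Cartier, and by the negativity lemma $\varphi^*(K_{Y_1}+\Delta_{Y_1})-(K_Y+\Delta_Y)=cE$ for some $c\in\mathbb{R}$; intersecting with $E$ and using $\varphi_*E=0$, $(K_Y+\Delta_Y)\cdot E=0$, and $E^2\neq 0$ forces $c=0$, so $\varphi^*(K_{Y_1}+\Delta_{Y_1})=K_Y+\Delta_Y$. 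Then for any curve $C$ on $Y_1$ contracted by $f_1$, its strict transform $\widetilde C$ is contracted by $f$, so the projection formula gives $(K_{Y_1}+\Delta_{Y_1})\cdot C=(K_Y+\Delta_Y)\cdot\widetilde C=0$; thus $K_{Y_1}+\Delta_{Y_1}\equiv_{f_1}0$. This completes the induction.

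The step I expect to require the most care is precisely this propagation of the hypotheses through a single contraction --- the negativity-lemma computation giving $\varphi^*(K_{Y_1}+\Delta_{Y_1})=K_Y+\Delta_Y$ exactly --- after which termination is automatic since $\rho$ drops each time. A more computational alternative would be, for a prime divisor $D$ on $X$, to use negative-definiteness to produce the unique $\mathbb{Q}$-Cartier divisor $M=f^{-1}_*D+\sum_i a_iE_i$ on $Y$ with $M\cdot E_i=0$ for all $i$, and then to descend $M$ (which is numerically $f$-trivial and pushes forward to $D$) to a $\mathbb{Q}$-Cartier divisor on $X$ via the descent statement in Theorem \ref{thm-cone} along the above factorization; this needs the same ingredients, so I would keep the first argument.
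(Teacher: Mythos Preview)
Your argument is correct and follows essentially the same idea as the paper: contract the $f$-exceptional curves one by one as $(K_Y+\Delta_Y+\varepsilon\cdot\text{exceptional})$-negative extremal rays over $X$, using Theorem~\ref{thm32}~(A) to carry $\mathbb Q$-factoriality through each step. The only difference is packaging: the paper perturbs once by $\varepsilon E$ with $E=\Exc(f)$ the \emph{entire} exceptional divisor and runs the full $(K_Y+\Delta_Y+\varepsilon E)$-MMP over $X$, observing via the negativity lemma that this MMP must terminate at $X$ itself---so no inductive bookkeeping (re-checking $K_{Y_1}+\Delta_{Y_1}\equiv_{f_1}0$, etc.) is needed; your step-by-step version unwinds exactly this.
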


\begin{proof}
We put $E=\Exc (f)$. 
We run the $(K_Y+\Delta _Y+\varepsilon E)$-minimal model program over $X$ where 
$\varepsilon$ is a small positive number such that 
$\llcorner \Delta_Y+\varepsilon E\lrcorner =0$. 
By the negativity lemma, the above minimal model 
program terminates at $X$. 
Therefore, $X$ is $\mathbb Q$-factorial 
by Theorem \ref{thm32} (A). 
\end{proof}

Let us contain \cite[Proposition 4.11]{km} 
for the reader's convenience. 
The statement (2) in the following proposition is missing 
in the English edition of \cite{km}. 
For definitions, see \cite[Notation 4.1]{km}. 

\begin{prop}[{cf.~\cite[Proposition 4.11]{km}}]\label{pro} 
We have the following two statements. 

{\em{(1)}} Let $(X, \Delta)$ be a numerically dlt pair. Then every 
Weil divisor on $X$ is $\mathbb Q$-Cartier, that is, 
$X$ is $\mathbb Q$-factorial. 

{\em{(2)}} Let $(X, \Delta)$ be a numerically lc pair. 
Then it is lc. 
\end{prop}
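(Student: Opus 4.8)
The plan is to reduce both statements to local computations on a minimal resolution and then apply the classification of the possible exceptional configurations, exactly as in \cite[Proposition 4.11]{km}. Since the assertions are local on $X$, I would replace $X$ by a small neighbourhood of a point $P$ and let $f\colon Y\to X$ be the minimal resolution, writing $K_Y+\Delta_Y=f^*(K_X+\Delta)+\sum a_i E_i$ with $E_i$ the $f$-exceptional curves and $\Delta_Y=f^{-1}_*\Delta$; the numerically dlt (resp.\ numerically lc) hypothesis says $a_i>-1$ (resp.\ $a_i\geq -1$), together with the condition on the dual graph in the dlt case. The first point to nail down is the numerical one: because $f$ is the minimal resolution, each $E_i$ has $E_i^2\leq -2$, and minimality forces the intersection matrix $(E_i\cdot E_j)$ to be negative definite. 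This is the structural input that makes everything work.

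For statement (1), once I know $(E_i\cdot E_j)$ is negative definite, I would argue that any Weil divisor $D$ on $X$ is $\mathbb Q$-Cartier by the standard device: pull $D$ back to $Y$ as $f^{-1}_*D$, and solve the linear system $(f^{-1}_*D+\sum c_i E_i)\cdot E_j=0$ for the rational numbers $c_i$ — this has a unique solution precisely because the intersection matrix is invertible. The resulting $\mathbb Q$-divisor $f^{-1}_*D+\sum c_i E_i$ is $f$-numerically trivial, hence (on a surface, by the negativity lemma and the fact that $f$ has $1$-dimensional fibres) is the pullback of a $\mathbb Q$-Cartier divisor on $X$, which must be a positive rational multiple of $D$ near $P$. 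Hence $X$ is $\mathbb Q$-factorial. The numerically dlt hypothesis enters only to guarantee we are in a situation where the relevant resolution behaves well; in fact for surfaces $\mathbb Q$-factoriality of a normal surface is automatic away from this, so the real content is simply the negative-definiteness just discussed.

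For statement (2), knowing from (1)-type reasoning that $K_X+\Delta$ is $\mathbb R$-Cartier (solve the same linear system with $f^{-1}_*(K_X)$ in place of $f^{-1}_*D$, using that the discrepancies are determined numerically), the discrepancy coefficients $a_i$ computed on the minimal resolution coincide with the honest discrepancies, and numerically lc — $a_i\geq -1$ for all exceptional divisors over $P$ — becomes literally the definition of lc. One subtlety is that lc requires $a(E,X,\Delta)\geq -1$ for \emph{every} divisor $E$ over $X$, not just those on the minimal resolution; here I would invoke the standard fact that on a surface, discrepancies of divisors extracted by further blow-ups of a minimal resolution only increase (a blow-up of a smooth point or of a node of the exceptional configuration raises, or does not lower, the relevant coefficients), so checking $\geq -1$ on the minimal resolution suffices. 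I expect the main obstacle to be precisely this last point — controlling discrepancies under further blow-ups, i.e.\ verifying that the minimal resolution already ``sees'' the worst discrepancy — and the cleanest route is to quote the surface discrepancy bookkeeping from \cite{km} (the analysis underlying \cite[Theorem 4.7]{km} and Notation 4.1) rather than redo the combinatorics of the dual graphs by hand.
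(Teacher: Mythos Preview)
Your proposal has a genuine gap at the step ``$f$-numerically trivial $\Rightarrow$ pullback of a $\mathbb{Q}$-Cartier divisor''. Negative-definiteness of the exceptional intersection matrix holds for the resolution of \emph{any} normal surface singularity, so if your argument for (1) worked as written it would prove every normal surface is $\mathbb{Q}$-factorial. The cone over an elliptic curve is the standard obstruction: it is numerically lc (the single exceptional curve $E$ has discrepancy $-1$), the matrix $(E^2)$ is negative definite, yet a ruling through the vertex corresponding to a non-torsion point of the elliptic curve is not $\mathbb{Q}$-Cartier. What is actually needed is that a line bundle on $Y$ numerically trivial on the exceptional set has a power which is \emph{linearly} trivial there, and the negativity lemma says nothing about this. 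The numerically dlt hypothesis is not incidental, contrary to what you write: it forces the exceptional curves to be rational with tree dual graph, and only then does numerical triviality on the exceptional set imply torsion.

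The paper's route is substantively different. For (1), rather than descending a single divisor it runs a $(K_Y+\Delta_Y+\varepsilon\,\Exc(f))$-MMP over $X$ (Corollary~\ref{lem-q}); each extremal contraction preserves $\mathbb{Q}$-factoriality via the line-bundle descent built into Theorem~\ref{thm-cone}(3)(iii), and the MMP terminates at $X$. For (2), after reducing to $\Delta=0$ and to the non-dlt case, the entire content is precisely to prove that $K_X$ is $\mathbb{Q}$-Cartier---your ``(1)-type reasoning'' does not give this, for the same reason as above. The paper instead uses Kawamata--Viehweg vanishing together with the classification in \cite[Theorem~4.7]{km} of the exceptional configuration (a cycle of rational curves with $\Delta_Y=\llcorner\Delta_Y\lrcorner$, or a tree of rational curves) to show $n(K_Y+\Delta_Y)|_{\llcorner\Delta_Y\lrcorner}\sim 0$ and hence $n(K_Y+\Delta_Y)\sim_f 0$. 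Finally, the obstacle you single out (discrepancies under further blow-ups) is not the issue: once $K_X+\Delta$ is honestly $\mathbb{R}$-Cartier, numerically lc and lc coincide by definition and there is nothing left to check.
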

\begin{proof}
In both cases, if $\Delta\ne 0$, then $(X, 0)$ is numerically 
dlt by \cite[Corollary 4.2]{km} and we can reduce the problem 
to the case (1) 
with $\Delta=0$. 
Therefore, we may assume that $\Delta=0$ when we prove this proposition. 
Let $f:Y\to X$ be a minimal resolution and 
let $\Delta_Y$ be the $f$-exceptional $\mathbb Q$-divisor on $Y$ such 
that $K_Y+\Delta_Y\equiv _f 0$. 
Then $\Delta_Y\geq 0$ by \cite[Corollary 4.3]{km}. 

(1) We can apply Corollary \ref{lem-q} 
since $\llcorner \Delta_Y\lrcorner =0$. We note that 
we only used Case (A) of Theorem \ref{thm32} for the 
proof of Corollary \ref{lem-q}. 
See also the proof of \cite[Proposition 4.11]{km}. 

(2) We may assume that $(X, 0)$ is not numerically dlt, that is, 
$\llcorner \Delta_Y\lrcorner\ne 0$. 
By \cite[Theorem 4.7]{km}, $\{\Delta_Y\}$ is 
a simple normal crossing divisor. 
Since $-\llcorner \Delta_Y\lrcorner \equiv _f K_Y+\{\Delta_Y\}$, we 
have 
$$R^1f_*\mathcal O_Y(n(K_Y+\Delta_Y)-\llcorner \Delta_Y\lrcorner)=0$$ by 
the Kawamata--Viehweg vanishing theorem 
for $n\in \mathbb Z_{>0}$ such that $n\Delta_Y$ is a Weil divisor. 
Therefore, we obtain a surjection 
$$f_*\mathcal O_Y(n(K_Y+\Delta_Y))\twoheadrightarrow 
f_*\mathcal O_{\llcorner \Delta_Y\lrcorner}(n(K_Y+\Delta_Y)).$$ 
Therefore, if we check 
$$
n(K_Y+\Delta_Y)|_{\llcorner \Delta_Y\lrcorner}\sim 0, 
$$ 
then we obtain $n(K_Y+\Delta_Y)\sim _f 0$ and 
$nK_X=f_*(n(K_Y+\Delta_Y))$ is a Cartier divisor. 
This statement can be checked by \cite[Theorem 4.7]{km} as follows. 
By the classification, $\llcorner \Delta_Y\lrcorner$ is a cycle and 
$\Delta_Y=\llcorner \Delta_Y\lrcorner$ 
(cf.~\cite[Definition 4.6]{km}), 
or $\llcorner \Delta_Y\lrcorner$ is a simple normal 
crossing divisor consisting of rational curves and the dual graph is a tree. 
In the former case, we have $K_{\Delta_Y}\sim 0$. So, $n=1$ is sufficient. 
In the latter case, since $H^1(\mathcal O_{\llcorner \Delta_Y\lrcorner})=0$, 
$n(K_Y+\Delta_Y)|_{\llcorner \Delta_Y\lrcorner}\sim 0$ if we choose 
$n>0$ such that 
$n(K_Y+\Delta_Y)$ is a numerically trivial 
Cartier divisor (cf.~\cite[Theorem 4.13]{km}). 
\end{proof}

We give an important remark on rational singularities. 

\begin{rem} 
Let $X$ be an algebraic surface. 
If $X$ has only rational singularities, then it is well known that 
$X$ is $\mathbb Q$-factorial. 
Therefore, we can apply the log minimal model program in Theorem \ref{thm32} 
for pairs of surfaces with only rational 
singularities and boundary $\mathbb R$-divisors 
on them. We note that there are many two-dimensional rational singularities 
which are not lc. 

We take a rational non-lc surface singularity $P\in X$. 
Let $\pi:Z\to X$ be the index one cover of $X$. 
In this case, $Z$ is not log canonical 
or rational.  
\end{rem}

We note that our log minimal model program works for the class of surfaces with 
only rational singularities by the next proposition. 
It is similar to \cite[Proposition 2.71]{km}. 
It is mysterious that \cite[Proposition 2.71]{km} is also missing 
in the English edition of \cite{km}. 

\begin{prop}\label{2626}
Let $(X, \Delta)$ be a log surface and let $f:X\to Y$ be a projective surjective 
morphism onto a normal surface $Y$. Assume that $-(K_X+\Delta)$ is $f$-ample. 
Then $R^if_*\mathcal O_X=0$ for every $i>0$. 
Therefore, if $X$ has only rational singularities, then 
$Y$ also has only rational singularities. 
\end{prop}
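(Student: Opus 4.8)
The plan is to establish the vanishing $R^if_*\mathcal{O}_X=0$ for $i>0$ by applying the Kodaira type vanishing theorem recalled in \ref{vani}. First I would take a resolution $g:Z\to X$ such that $\mathrm{Exc}(g)\cup g^{-1}_*\Delta$ has simple normal crossing support, and write $K_Z+\Delta_Z=g^*(K_X+\Delta)+E$ where $\Delta_Z$ is the strict transform of $\Delta$ plus the reduced exceptional divisors appearing with discrepancy $\le -1$ (so $\Delta_Z$ is a boundary), and $E\ge 0$ is $g$-exceptional, with $\llcorner\Delta_Z\lrcorner$ supported on exceptional curves. Set $h=f\circ g:Z\to Y$. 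The key identity is that $\mathcal{O}_Z(\ulcorner E\urcorner-\llcorner\Delta_Z\lrcorner+\Delta_Z^{=1})$ pushes forward to $\mathcal{O}_X$ (via $g_*$) because $E$ is effective and $g$-exceptional and $\Delta_Z-\llcorner\Delta_Z\lrcorner$ has coefficients in $[0,1)$; more precisely $g_*\mathcal{O}_Z(\ulcorner E\urcorner - \Delta_Z^{<1} - \llcorner\Delta_Z^{=1}\lrcorner + \ldots)=\mathcal{O}_X$ after bookkeeping, and one wants to arrange $L:=\ulcorner E\urcorner + (\text{something})$ so that $g_*\mathcal{O}_Z(L)=\mathcal{O}_X$ and $L-(K_Z+\{\text{boundary}\})$ is $h$-semiample, indeed $g^*(-(K_X+\Delta))$-proportional.

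Concretely, since $-(K_X+\Delta)$ is $f$-ample, pick a general $\pi$-ample (here $f$-ample) $\mathbb{Q}$-divisor and write things relatively; the cleanest route is: $-(K_X+\Delta)\sim_{f,\mathbb{Q}} A$ with $A$ $f$-ample, pull back to $Z$, and note $(K_Z+\Delta_Z) - E + g^*A \sim_{g,\mathbb{Q}} 0$ hence $\ulcorner E\urcorner - (K_Z+\Delta_Z-\{E\}\text{-corrections})$ matches the shape of \ref{vani} with the nef and big divisor on $Y$ being (a small perturbation making $A$ ample after adding $f^*(\text{ample on }Y)$, or working over an affine base where ampleness statements are automatic). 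Then \ref{vani} gives $H^i(Y,R^jh_*\mathcal{O}_Z(L))=0$ for all $i>0,\ j\ge 0$ where $L$ is the relevant Cartier divisor with $g_*\mathcal{O}_Z(L)=\mathcal{O}_X$; combined with $R^jg_*\mathcal{O}_Z(L)=0$ for $j>0$ (again \ref{vani}, or the local form of the vanishing) and the Leray spectral sequence for $h=f\circ g$, one extracts $R^if_*\mathcal{O}_X=0$ for $i>0$.

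I would then conclude the second assertion formally: if $X$ has only rational singularities, take a resolution $g:Z\to X$ with $R^jg_*\mathcal{O}_Z=0$ for $j>0$; then $h=f\circ g:Z\to Y$ is a resolution of $Y$, and the Leray spectral sequence together with $R^if_*\mathcal{O}_X=0$ for $i>0$ (just proved) yields $R^ih_*\mathcal{O}_Z=0$ for $i>0$, which is exactly the statement that $Y$ has only rational singularities (and $Y$ normal is given).

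The main obstacle I anticipate is the careful bookkeeping in the first step: arranging a single Cartier divisor $L$ on $Z$ that simultaneously satisfies $g_*\mathcal{O}_Z(L)=\mathcal{O}_X$ and fits the hypotheses of the vanishing theorem \ref{vani}, in particular ensuring the required bigness of the nef divisor \emph{along every lc center} — here the fact that the non-lc locus of a log surface is zero-dimensional and that lc centers are points or curves keeps this manageable, but one must check that $A$ restricted to the image of each lc center remains big, which follows from $A$ being ample on $X$. Passing to the relative setting (replacing "nef and big" by the relative ampleness of $-(K_X+\Delta)$) is the standard device and should cause no real trouble; the surface-specific input that makes everything go through is precisely that there are no curves in the non-lc locus, so the delicate clause about lc centers in \ref{vani} is automatically satisfied.
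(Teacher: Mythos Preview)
Your plan has a genuine gap in the bookkeeping step, and it is not just a matter of care. With your decomposition $K_Z+\Delta_Z=g^*(K_X+\Delta)+E$, where $\Delta_Z$ is the strict transform of $\Delta$ together with the reduced exceptionals of discrepancy $\le -1$, the divisor $E$ is \emph{not} effective in general: an exceptional divisor $E_i$ with discrepancy $a_i<-1$ contributes $1+a_i<0$ to $E$, and an exceptional with $-1<a_i<0$ contributes $a_i<0$. More to the point, there is no choice of Cartier divisor $L$ on $Z$ that simultaneously satisfies the hypotheses of \ref{vani} and has $g_*\mathcal O_Z(L)=\mathcal O_X$. If you write $K_Z+B=g^*(K_X+\Delta)$ and chase through the requirement $L-(K_Z+\Theta)\sim_{\mathbb Q} g^*(-(K_X+\Delta))$ with $\Theta$ a boundary, you are forced into $L=-\llcorner B\lrcorner+B^{=1}$, and then $g_*\mathcal O_Z(L)=\mathcal J_{NLC}(X,\Delta)$, which is strictly smaller than $\mathcal O_X$ whenever $(X,\Delta)$ is not log canonical. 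So the Leray argument you sketch only delivers $R^if_*\mathcal J_{NLC}(X,\Delta)=0$, not $R^if_*\mathcal O_X=0$.

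The paper closes exactly this gap, and in one line: it applies the vanishing theorem (in the form \cite[Theorem 8.1]{fujino2}) to obtain $R^if_*\mathcal J_{NLC}(X,\Delta)=0$ for $i>0$, then uses the short exact sequence
\[
0\to \mathcal J_{NLC}(X,\Delta)\to \mathcal O_X\to \mathcal O_X/\mathcal J_{NLC}(X,\Delta)\to 0
\]
together with the fact that $\Supp(\mathcal O_X/\mathcal J_{NLC}(X,\Delta))=\Nlc(X,\Delta)$ is zero-dimensional (this is where ``$\Delta$ is a boundary on a surface'' is used) to get $R^if_*(\mathcal O_X/\mathcal J_{NLC}(X,\Delta))=0$, hence $R^if_*\mathcal O_X=0$. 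Your intuition that ``the non-lc locus is zero-dimensional keeps this manageable'' is correct, but this observation has to be used as a separate step \emph{after} the vanishing theorem, not absorbed into the choice of $L$. Your deduction of the second assertion from the first via Leray is fine and matches the paper.
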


\begin{proof}We consider the short exact sequence 
$$
0\to \mathcal J_{NLC}(X, \Delta)\to 
\mathcal O_X\to \mathcal O_X/\mathcal J_{NLC}(X, \Delta)\to 0, 
$$ 
where $\mathcal J_{NLC}(X, \Delta)$ is the non-lc ideal sheaf associated to 
the pair $(X, \Delta)$. 
By the vanishing theorem (cf.~\cite[Theorem 8.1]{fujino2}), we know 
$R^if_*\mathcal J_{NLC}(X, \Delta)=0$ for every $i>0$. 
Since $\Delta$ is a boundary $\mathbb R$-divisor, 
we have $\dim _{\mathbb C}\Supp (\mathcal O_X/\mathcal J_{NLC}(X, \Delta))=0$. 
So, we obtain 
$R^if_*(\mathcal O_X/\mathcal J_{NLC}(X, \Delta))=0$ for every $i>0$. 
Thus, $R^if_*\mathcal O_X=0$ for all $i>0$. 
\end{proof}

As a corollary, we can check the following result. 

\begin{prop}[Extremal rational curves]\label{2727}
Let $(X, \Delta)$ be a log surface and let $\pi:X\to S$ be a projective surjective 
morphism onto a variety $S$. 
Let $R$ be a $(K_X+\Delta)$-negative extremal ray. 
Then $R$ is spanned by a rational curve $C$ on $X$ such that 
$-(K_X+\Delta)\cdot C\leq 3$. 
Moreover, if $X\not\simeq \mathbb P^2$, then 
we can choose $C$ with $-(K_X+\Delta)\cdot C\leq 2$. 
\end{prop}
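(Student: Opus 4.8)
The plan is to use, for the given $(K_X+\Delta)$-negative extremal ray $R$, the contraction morphism $\varphi_R\colon X\to Y$ provided by Theorem \ref{thm-cone}(3), and to exhibit a curve spanning $R$ by a case analysis on $\dim Y\in\{0,1,2\}$; any curve contracted by $\varphi_R$ automatically lies in $R$. Suppose first $\dim Y=1$. Take a general fibre $F$ of $\varphi_R$, or a general irreducible component of it, through a general point of $X$, so that $F$ is smooth, avoids the finitely many singular points of $X$, and is not contained in $\Supp\Delta$. Then $F^2=0$ and $[F]\in R$, so $K_X\cdot F=(K_X+\Delta)\cdot F-\Delta\cdot F<0$; adjunction on the smooth locus gives $2p_a(F)-2=K_X\cdot F+F^2=K_X\cdot F$, forcing $p_a(F)=0$, $F\cong\mathbb P^1$ and $K_X\cdot F=-2$, whence $-(K_X+\Delta)\cdot F=2-\Delta\cdot F\le 2$. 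Here $X\not\cong\mathbb P^2$, since $\mathbb P^2$ admits no morphism onto a curve.

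Suppose next $\dim Y=2$, so $\varphi_R$ is birational. I would first check that $\Exc(\varphi_R)$ is a single irreducible curve $C$: connectedness of the fibres ($(\varphi_R)_*\mathcal O_X=\mathcal O_Y$), together with the observation that a nonzero effective divisor supported over one point of $Y$ meets no fibre over another point yet has negative self-intersection, shows $\Exc(\varphi_R)$ lies over one point, and negative-definiteness of the intersection matrix of its components against $\rho(X/Y)=1$ forces irreducibility. By Proposition \ref{2626}, $R^1(\varphi_R)_*\mathcal O_X=0$; combined with $R^2(\varphi_R)_*\mathcal O_X=0$ and $(\varphi_R)_*\mathcal O_X=\mathcal O_Y$, base change gives $H^1(X_y,\mathcal O_{X_y})=0$ for the scheme-theoretic fibre $X_y$ over $y=\varphi_R(C)$, and pushing forward along the surjection with nilpotent kernel $\mathcal O_{X_y}\twoheadrightarrow\mathcal O_C$ yields $p_a(C)=0$, so $C\cong\mathbb P^1$. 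On the minimal resolution $f\colon W\to X$ write $f^*(K_X+\Delta)=K_W+\Delta'_W$; since $a(E,X,0)\le 0$ for every $f$-exceptional $E$, the $\mathbb R$-divisor $\Delta'_W$ is effective. With $\widetilde C:=f^{-1}_*C\cong\mathbb P^1$ and $\widetilde C^2\le-1$ (as $\widetilde C$ is contracted by $\varphi_R\circ f$), the projection formula gives $(K_X+\Delta)\cdot C=(K_W+\Delta'_W)\cdot\widetilde C=(-2-\widetilde C^2)+\Delta'_W\cdot\widetilde C$; a short case distinction according to whether $C$ is a component of $\Delta$ (its coefficient there being at most $1$) shows this is $\ge-2$, i.e.\ $-(K_X+\Delta)\cdot C\le 2$. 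Again $X\not\cong\mathbb P^2$.

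The case $\dim Y=0$ is the crux, and the step I expect to be the main obstacle. Here $\rho(X)=1$ and $-(K_X+\Delta)$ is ample, so there is no fibre and no exceptional curve to exploit; one must build a rational curve. Since $-(K_X+\Delta)$ is ample and $\Delta\ge0$, one checks $H^0(X,\mathcal O_X(mK_X))=0$ for all $m>0$, hence $\kappa(W)=-\infty$ for the minimal resolution $f\colon W\to X$; by the Enriques classification $W$ is either $\mathbb P^2$ or admits a morphism onto a smooth curve whose general fibre $F$ is a $\mathbb P^1$. If $W\cong\mathbb P^2$ then $f$ is an isomorphism and $X\cong\mathbb P^2$, and a line $\ell$ gives $-(K_X+\Delta)\cdot\ell=3-\Delta\cdot\ell\le3$, the value $3$ being attained (e.g.\ for $\Delta=0$). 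Otherwise, a general such $F$ is not $f$-exceptional, so $C:=f(F)$ is a rational curve on $X$ with $f|_F\colon F\to C$ birational; since $F\cong\mathbb P^1$ and $F^2=0$ we have $K_W\cdot F=-2$, and the projection formula together with effectivity of $\Delta'_W$ gives $-(K_X+\Delta)\cdot C=-(K_W+\Delta'_W)\cdot F=2-\Delta'_W\cdot F\le2$. The essential input needed here goes beyond Theorem \ref{thm-cone} — the Enriques classification of the minimal resolution, or equivalently Mori's bend-and-break on $X$ — and is complicated by the fact that $X$ need not have rational singularities and $W$ need not be rational. The remaining technical care lies in the effectivity of $\Delta'_W$, the edge cases where the curve produced is a component of $\Delta$ (which is exactly what forces the constant $2$ rather than $1$ for birational contractions), the irreducibility of $\Exc(\varphi_R)$, and the passage from $R^1(\varphi_R)_*\mathcal O_X=0$ to $p_a(C)=0$.
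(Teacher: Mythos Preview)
Your proof is correct and follows the same strategy as the paper: take the contraction $\varphi_R$, do a case analysis on $\dim Y$, pass to the minimal resolution (with its effective $\Delta'_W$), and invoke Proposition~\ref{2626} in the birational case. The paper's treatment of the birational case is slightly leaner---it simply takes any irreducible component $E$ of $\Exc(\varphi_R)$ (no need to argue irreducibility of the exceptional locus) and reads off $H^1(E,\mathcal O_E)=0$ directly from the long exact sequence for $0\to\mathcal I_E\to\mathcal O_X\to\mathcal O_E\to 0$ together with $R^1\varphi_{R*}\mathcal O_X=0$, bypassing the scheme-theoretic fibre and base change; otherwise the arguments coincide.
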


\begin{proof}We consider the extremal contraction 
$\varphi_R:X\to Y$ over $S$ associated to 
$R$. 
Let $f:Z\to X$ be the minimal resolution such that 
$K_Z+\Delta_Z=f^*(K_X+\Delta)$. 
Note that $\Delta_Z$ is effective. 
First, we assume that $Y$ is a point. 
Let $D$ be a general curve on $Z$. 
Then $D\cdot (K_Z+\Delta_Z)=D\cdot f^*(K_X+\Delta)<0$. 
Therefore, $\kappa (Z, K_Z)=-\infty$. 
If $X\simeq \mathbb P^2$, then 
the statement is obvious. 
So, we may assume that $X\not\simeq \mathbb P^2$. 
In this case, there exists a morphism $g:Z\to B$ onto 
a smooth curve $B$. Let $D$ be a general fiber of $g$. 
Then $D\simeq \mathbb P^1$ and $-(K_Z+\Delta_Z)\cdot D
=-f^*(K_X+\Delta)\cdot D\leq 2$. Thus, 
$C=f(D)\subset X$ has the desired properties. 
Next, we assume that  $Y$ is a curve. 
In this case, we take a general fiber of 
$\varphi_R\circ f:Z\to X\to Y$. 
Then, it gives a desired curve as in the previous case. 
Finally, we assume that $\varphi_R:X\to Y$ 
is birational. Let $E$ be an irreducible component of 
the exceptional locus of $\varphi_R$. 
We consider the short exact sequence 
$$
0\to \mathcal I_E\to \mathcal O_X\to \mathcal O_E\to 0,  
$$ 
where $\mathcal I_E$ is the defining ideal sheaf of $E$ on $X$. 
By Proposition \ref{2626}, 
$R^1\varphi_{R*}\mathcal O_X=0$. 
Therefore, $R^1\varphi_{R*}\mathcal O_E=H^1(E, \mathcal O_E)=0$. 
Thus, $E\simeq \mathbb P^1$. 
Let $F$ be the strict transform of 
$E$ on $Z$. 
Then the coefficient of $F$ in $\Delta_Z$ is $\leq 1$ and 
$F^2<0$. 
Therefore, $-f^*(K_X+\Delta)\cdot F=-(K_Z+\Delta_Z)\cdot F\leq 2$. 
This means that $-(K_X+\Delta)\cdot E\leq 2$ and 
$E$ spans $R$. 
\end{proof}

We note the following 
easy result. 

\begin{prop}[Uniqueness]\label{p38} 
Let $(X, \Delta)$ be a log surface and 
let $\pi: X\to S$ be a projective 
morphism onto a variety $S$ as in {\em{Theorem 
\ref{thm32}}}. 
Let $(X^*, \Delta^*)$ and $(X^\dag, \Delta^\dag)$ be minimal 
models of $(X, \Delta)$ over $S$. Then 
$(X^*, \Delta^*)\simeq (X^\dag, \Delta^\dag)$ over $S$. 
\end{prop}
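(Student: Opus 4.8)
The plan is to run the standard negativity-lemma argument for uniqueness of minimal models, specialized to surfaces. First, since every contraction $\varphi_i$ in Theorem~\ref{thm32} is a projective morphism, the two runs of the MMP yield projective birational morphisms $\psi\colon X\to X^*$ and $\psi'\colon X\to X^\dag$ over $S$, with $\Delta^*=\psi_*\Delta$ and $\Delta^\dag=\psi'_*\Delta$. I would then fix a common resolution: a smooth projective surface $W$ with projective birational morphisms $\mu\colon W\to X$, $p=\psi\circ\mu\colon W\to X^*$ and $q=\psi'\circ\mu\colon W\to X^\dag$ over $S$, and consider $E=p^*(K_{X^*}+\Delta^*)-q^*(K_{X^\dag}+\Delta^\dag)$ on $W$ (well defined since $K_{X^*}+\Delta^*$ and $K_{X^\dag}+\Delta^\dag$ are $\mathbb R$-Cartier by Theorem~\ref{thm32}, in either case (A) or (B)).

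The heart of the argument is to show $E=0$. For this I would first observe that $-E$ is $p$-nef and $E$ is $q$-nef: if $C$ is a curve contracted by $p$, then $C$, hence $q_*C$, lies in a fibre over $S$, so $-E\cdot C=(K_{X^\dag}+\Delta^\dag)\cdot q_*C\ge 0$ because $K_{X^\dag}+\Delta^\dag$ is nef over $S$, and symmetrically for $q$. Next, the coefficient of a prime divisor $D$ on $W$ in $E$ is $a(D,X^\dag,\Delta^\dag)-a(D,X^*,\Delta^*)$. The key input is that discrepancies are non-decreasing along any run of the MMP and are \emph{preserved exactly} for prime divisors that are never contracted (each $\varphi_i$ being an isomorphism away from one curve and its image point); hence, if $D$ is not $p$-exceptional, the prime divisor $D_X=\mu(D)$ on $X$ survives to $X^*$ and $a(D,X^*,\Delta^*)=a(D_X,X,\Delta)\le a(D_X,X^\dag,\Delta^\dag)=a(D,X^\dag,\Delta^\dag)$, with equality unless $D$ is $q$-exceptional. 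So $p_*E\ge 0$, and the negativity lemma (applied to the $p$-nef divisor $-E$) gives $E\ge 0$; the symmetric computation with $p$ and $q$ interchanged gives $-E\ge 0$. Therefore $E=0$, which says both that $p^*(K_{X^*}+\Delta^*)=q^*(K_{X^\dag}+\Delta^\dag)$ and that $a(D,X^*,\Delta^*)=a(D,X^\dag,\Delta^\dag)$ for every prime divisor $D$ over $X$.

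To finish, I would note that a prime divisor contracted during one run of the MMP acquires a \emph{strictly} larger discrepancy on the corresponding output; together with the equality of all discrepancies just obtained, this rules out the existence of a $q$-exceptional prime divisor on $W$ that is not $p$-exceptional, and conversely. Thus $\phi=\psi'\circ\psi^{-1}\colon X^*\dashrightarrow X^\dag$ is an isomorphism in codimension one, and since the exceptional locus of a birational morphism of normal surfaces is purely one-codimensional, neither $\phi$ nor $\phi^{-1}$ contracts anything, so both are morphisms and $\phi$ is an isomorphism; finally $\phi_*\Delta^*=\Delta^\dag$ as both sides are the strict transform of $\Delta$. I expect the only delicate point to be the middle paragraph: correctly orienting the negativity lemma and keeping track of which prime divisors on $W$ survive, are $p$-exceptional, or are $q$-exceptional; the passage from "isomorphism in codimension one" to "isomorphism" is classical for surfaces.
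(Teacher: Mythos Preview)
Your argument is correct and rests on the same core idea as the paper's---the negativity lemma applied in both directions---but you take a longer route than necessary. Because the surface MMP consists only of divisorial contractions (no flips), $X$ itself already dominates both $X^*$ and $X^\dag$ via the compositions $f=\psi$ and $g=\psi'$; there is no need to pass to a further resolution $W$. The paper simply writes
\[
K_X+\Delta=f^*(K_{X^*}+\Delta^*)+E=g^*(K_{X^\dag}+\Delta^\dag)+F
\]
on $X$, notes that $E,F\ge 0$ with $\Supp E=\Exc(f)$ and $\Supp F=\Exc(g)$, and then applies the negativity lemma twice to conclude $E=F$. This immediately gives $\Exc(f)=\Exc(g)$, so the induced birational map $X^*\dashrightarrow X^\dag$ is an isomorphism off a finite set, hence an isomorphism. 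Your discrepancy bookkeeping (showing $p_*E\ge 0$ via monotonicity of discrepancies, then deducing equality of exceptional loci from strict increase of discrepancies for contracted divisors) is the standard device one needs in higher dimensions where flips occur and there is no common dominating model among the MMP outputs; here it is correct but superfluous. The payoff of the paper's version is brevity and the avoidance of any discussion of discrepancies along a non--log-canonical MMP, while your version has the virtue of being the argument that generalizes.
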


\begin{proof}
We consider 
$$
K_X+\Delta=f^*(K_{X^*}+\Delta^*)+E, 
$$
and 
$$
K_X+\Delta=g^*(K_{X^\dag}+\Delta^\dag)+F, 
$$ 
where $f:X\to X^*$ and $g:X\to X^\dag$. We note 
that $\Supp E=\Exc (f)$ and 
$\Supp F=\Exc (g)$. 
By the negativity lemma, we 
obtain $E=F$. 
Therefore, $(X^*, \Delta^*)\simeq (X^\dag, \Delta^\dag)$ 
over $S$.  
\end{proof}
 
We close this section with a remark on the {\em{Zariski decomposition}}. 
 
\begin{rem}\label{Z} 
Let $(X, \Delta)$ be a projective 
log surface such that $K_X+\Delta$ is $\mathbb Q$-Cartier and 
pseudo-effective. 
Assume that $(X, \Delta)$ is log canonical 
or $X$ is $\mathbb Q$-factorial. 
Then there exists the unique 
minimal model $(X^*, \Delta^*)$ of $(X, \Delta)$ by Theorem \ref{thm32} 
and Proposition \ref{p38}. 
Let $f:X\to X^*$ be the natural morphism. 
Then we can write 
$$
K_X+\Delta=f^*(K_{X^*}+\Delta^*)+E,  
$$ 
where $E$ is an effective $\mathbb Q$-divisor such that 
$\Supp E=\Exc (f)$. 
It is easy to see that 
$f^*(K_{X^*}+\Delta^*)$ (resp.~$E$) 
is the semi-positive (resp.~negative) part of 
the Zariski decomposition 
of $K_X+\Delta$. 
By Theorem \ref{aban} below, 
the semi-positive part $f^*(K_{X^*}+\Delta^*)$ of 
the Zariski decomposition 
of $K_X+\Delta$ is semi-ample. 
\end{rem}
 
\section{Finite generation of log canonical rings}\label{sec4}

In this section, we prove that the log canonical ring 
of a $\mathbb Q$-factorial projective log surface is finitely generated. 

First, we prove a special case of the log abundance conjecture for 
log surfaces. Our proof heavily depends on a Kodaira 
type vanishing theorem. 
 
\begin{thm}[Semi-ampleness]\label{41} 
Let $(X, \Delta)$ be a $\mathbb Q$-factorial projective log surface. 
Assume that $K_X+\Delta$ is nef and big and that $\Delta$ is 
a $\mathbb Q$-divisor. 
Then $K_X+\Delta$ is semi-ample. 
\end{thm}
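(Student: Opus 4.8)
The plan is to prove that $|m(K_X+\Delta)|$ is base point free for some $m>0$, which is equivalent to semi-ampleness. Write $D:=K_X+\Delta$, a nef, big, $\mathbb Q$-Cartier divisor. The argument follows the pattern of the Kawamata--Shokurov base point free theorem, but carried out on a log resolution and with the non-lc ideal sheaf in place of the usual multiplier ideal, since $(X,\Delta)$ is only assumed to be a log surface and need not be log canonical. Fix a log resolution $f\colon Y\to X$ with $K_Y+\Delta_Y=f^*D$ and $\Supp\Delta_Y$ simple normal crossing; then $\Delta_Y$ is effective but possibly not a boundary, and every component of $\Delta_Y$ with coefficient $>1$ lies over the finite set $\Nlc(X,\Delta)$ --- recall that the non-lc locus of a log surface is zero-dimensional.

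First I would prove freeness of $|mD|$ away from finitely many curves. Fix a point $x\in X$. Since $D$ is big, choose an effective $\mathbb Q$-divisor $D_0\sim_{\mathbb Q}D$ that is sufficiently singular at $x$, and a coefficient $0<c<1$, so that near $x$ the pair $(X,\Delta+cD_0)$ is log canonical with $\{x\}$ an isolated non-klt center; then the non-lc locus of $(X,\Delta+cD_0)$ sits inside the zero-dimensional set $\{x\}\cup\Nlc(X,\Delta)$. Because
$$
mD-(K_X+\Delta+cD_0)\sim_{\mathbb Q}(m-1-c)D
$$
is nef and big for $m\gg 0$ and is automatically big on every lc center of $(X,\Delta+cD_0)$, each such center being a point, the Kodaira-type vanishing theorem of \ref{vani} applied on $Y$, together with the short exact sequence for $\mathcal J_{NLC}(X,\Delta+cD_0)$ as in the proof of Proposition \ref{2626}, gives
$$
H^1\bigl(X,\mathcal J_{NLC}(X,\Delta+cD_0)\otimes\mathcal O_X(mD)\bigr)=0.
$$
Hence $H^0(X,\mathcal O_X(mD))$ surjects onto the sections of $\mathcal O_X(mD)$ on the zero-dimensional non-lc locus, which produces a section of $mD$ not vanishing at $x$. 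Running this over all $x$ and using the usual argument to make $m$ uniform, $\Bs|mD|$ is contained in a finite union of curves $C_i$ with $D\cdot C_i=0$.

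It then remains to analyse these curves. By the Hodge index theorem, $D\cdot C=0$ together with $D^2>0$ forces $C^2<0$, so such $C$ are finite in number and have negative definite intersection matrix; moreover $\mathbb Q$-factoriality makes $K_X+C$ $\mathbb Q$-Cartier, so adjunction applies and
$$
D|_C\sim_{\mathbb Q}K_C+\mathrm{Diff}_C(\Delta-C),\qquad \mathrm{Diff}_C(\Delta-C)\ge 0.
$$
Since $\deg(D|_C)=0$, the curve $C$ has arithmetic genus at most $1$, and a short case analysis --- if $\mult_C\Delta<1$ one gets $\deg\bigl(K_C+\mathrm{Diff}_C(\Delta-C)\bigr)<0$, hence $C\cong\mathbb P^1$; if $\mult_C\Delta=1$ then either $C\cong\mathbb P^1$ or $C$ is a smooth elliptic curve disjoint from the rest of $\Delta$ --- shows in every case that $D|_C$ is a torsion element of $\Pic(C)$. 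This is exactly where the hypothesis that $\Delta$ is a $\mathbb Q$-divisor is used: for an $\mathbb R$-divisor one would only obtain numerical triviality on $C$. Finally I would invoke a Zariski--Fujita-type principle: a nef and big $\mathbb Q$-Cartier divisor on a projective surface that restricts to a torsion class on every curve $C$ with $D\cdot C=0$ is semi-ample; its proof is again a cohomology-vanishing argument, and this is the step where a Kodaira-type vanishing theorem does the essential work.

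The main difficulty, as I see it, is not a single computation but the consistent handling of the non-log-canonical locus throughout: since $(X,\Delta)$ is not assumed log canonical, the classical base point free theorem cannot be used directly, and at each stage one must check that the perturbation $cD_0$ creates no positive-dimensional non-lc center, that $\mathcal J_{NLC}$ interacts correctly with the vanishing theorem of \ref{vani}, and that the zero-dimensionality of $\Nlc(X,\Delta)$ --- a feature special to log surfaces --- is precisely what makes the lifting of sections work. The more delicate geometric input, confined to the last step, is the adjunction analysis of the degree-zero curves and the passage from ``$D|_C$ torsion on each such curve'' to ``$D$ semi-ample''.
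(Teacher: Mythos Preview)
Your proposal has a genuine gap in the first step. You claim that every lc center of the perturbed pair $(X,\Delta+cD_0)$ is a point, but this is false: if $C$ is a component of $\llcorner\Delta\lrcorner$ along which $(X,\Delta)$ is generically log canonical, then $C$ is a one-dimensional lc center of $(X,\Delta)$ and remains one for $(X,\Delta+cD_0)$ whenever $D_0$ does not contain $C$. If in addition $D\cdot C=0$, then $(m-1-c)D|_C$ has degree zero and is not big, so the vanishing theorem of \ref{vani} does not apply; the asserted vanishing of $H^1\bigl(X,\mathcal J_{NLC}(X,\Delta+cD_0)\otimes\mathcal O_X(mD)\bigr)$ fails exactly on the curves you are trying to rule out of the base locus. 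There is also a mismatch between the ideal sheaf and the conclusion: since $x$ is constructed as an \emph{lc} center rather than a non-lc point, it does not lie in the support of $\mathcal O_X/\mathcal J_{NLC}$, so even granting the vanishing, the resulting surjection would not produce a section nonvanishing at $x$.

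The paper's proof confronts this difficulty directly rather than deferring it. It splits $\llcorner\Delta\lrcorner=A+B$ according to whether $D\cdot C_i=0$ or $>0$, and the bulk of the argument is devoted to showing $\mathcal O_A(n(K_X+\Delta))\simeq\mathcal O_A$ on the reduced scheme $A$ --- including a delicate case where $A_j$ meets $\Nlc(X,\Delta)$, handled by contracting curves. Only then are tailored ideal sheaves $\mathcal J_1,\mathcal J_2,\mathcal J_3$ built so that the remaining one-dimensional lc centers all satisfy $D\cdot C>0$, and the vanishing of \ref{vani} legitimately applies. Your final ``Zariski--Fujita-type principle'' hides exactly this work: passing from ``$D|_C$ is torsion on each irreducible $C$'' to semi-ampleness requires triviality on the full scheme $A$ and a lifting argument that is itself a careful application of the vanishing theorem. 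Your adjunction case analysis in the second step also does not address curves $C$ that pass through $\Nlc(X,\Delta)$, which is the hardest case in the paper's argument.
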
 

\begin{proof}
If $(X, \Delta)$ is klt, then $K_X+\Delta$ is semi-ample by 
the Kawamata--Shokurov base point free theorem. 
Therefore, we may assume that $(X, \Delta)$ is not klt. 
We divide the proof into several steps. 

\setcounter{say}{-1} 
\begin{say}\label{t0}
Let $\llcorner \Delta\lrcorner=\sum _i C_i$ be 
the irreducible decomposition. 
We put 
$$
A=\sum _{C_i\cdot (K_X+\Delta)=0}C_i \quad \text{and}\quad  
B=\sum _{C_i\cdot (K_X+\Delta)>0}C_i. 
$$
Then $\llcorner \Delta \lrcorner =A+B$. We note that 
$(C_i)^2<0$ if $C_i\cdot (K_X+\Delta)=0$ by 
the Hodge index theorem. 
We can decompose $A$ into 
the connected components as follows: 
$$
A=\sum _j A_j. 
$$
\end{say}

First, let us recall the following well-known easy result. 
Strictly speaking, Step \ref{t1} is redundant by more 
sophisticated arguments in Step \ref{t5} and Step \ref{t6}. 
 
\begin{say}\label{t1}
Let $P$ be an isolated point of $\Nklt (X, \Delta)$. 
Then $P\not\in \Bs|n(K_X+\Delta)|$, 
where $n$ is a divisible 
positive integer. 
\end{say}

\begin{proof}[Proof of {\em{Step \ref{t1}}}]
Let $\mathcal J(X, \Delta)$ be the multiplier ideal sheaf 
associated to $(X, \Delta)$. 
Then we have $$H^i(X, \mathcal O_X(n(K_X+\Delta))\otimes 
\mathcal J(X, \Delta))=0$$ 
for every $i>0$ 
by the Kawamata--Viehweg--Nadel vanishing theorem 
(cf.~\ref{vani}). 
Therefore, the restriction map 
$$
H^0(X, \mathcal O_X(n(K_X+\Delta)))\to 
H^0(X, \mathcal O_X(n(K_X+\Delta))/\mathcal J(X, \Delta))
$$ 
is surjective. 
By assumption, the evaluation map 
$$
H^0(X, \mathcal O_X(n(K_X+\Delta)))\to\mathcal O_X(n(K_X+\Delta))\otimes 
\mathbb C(P) 
$$ 
at $P$ is surjective. 
This implies that 
$P\not \in \Bs|n(K_X+\Delta)|$. 
\end{proof}

Next, we will check that $\Bs |n(K_X+\Delta)|$ 
contains no non-klt centers for 
a divisible positive 
integer $n$ from 
Step \ref{t3} to Step \ref{t7} (cf.~\cite[Theorem 12.1]{fujino2} 
and \cite[Theorem 1.1]{fujino5}). 

\begin{say}\label{t3}
We consider $A_j$ with $\Nlc (X, \Delta)\cap A_j\ne \emptyset$. 
Let $A_j=\sum_i D_i$ be the irreducible decomposition. 
We can easily check that $D_i$ is rational for every $i$ and 
that there exists a point $P\in \Nlc (X, \Delta)$ such that 
$P\in D_i$ for every $i$ by calculating differents 
(see, for example, \cite[Section 14]{fujino2}). 
We can also see that $D_k\cap D_l =P$ for $k\ne l$ and 
that $D_i$ is smooth outside $P$ for every $i$ by adjunction and 
inversion of 
adjunction. 
If $D_i\cap (\Delta -D_i)\ne \emptyset$, 
then 
$D_i$ spans a $(K_{X}+D_i)$-negative extremal ray. 
So, we can contract $D_i$ 
in order to prove that $\Bs |n(K_X+\Delta)|$ contains 
no non-klt centers (see Remark \ref{rem-rem} below). 
We note that $(K_X+\Delta)\cdot D_i=0$. 
Therefore, 
by replacing $X$ with its contraction, 
we may assume that $A_j$ is irreducible. 
We can further assume that $A_j$ is isolated in $\Supp \Delta$. 
It is because we can contract 
$A_j$ if $A_j$ is not isolated in $\Supp \Delta$.  

If $A_j$ is $\mathbb P^1$, 
then it is easy to see that 
$\mathcal O_{A_j}(n(K_X+\Delta))\simeq \mathcal O_{A_j}$ 
since $A_j\cdot (K_X+\Delta)=0$. 

If $A_j\ne \mathbb P^1$, then we obtain $H^1(A_j, \mathcal O_{A_j})\ne 0$. 
Therefore, by Serre duality, we obtain 
$H^0(A_j, \omega_{A_j})\ne 0$, 
where $\omega_{A_j}$ is the dualizing sheaf of $A_j$. 
We note that 
$$
0\to \mathcal T\to \mathcal O_X(K_X+A_j)\otimes \mathcal O_{A_j}\to 
\omega_{A_j}\to 0 
$$ is exact, where $\mathcal T$ is the 
torsion part of $\mathcal O_X(K_X+A_j)\otimes 
\mathcal O_{A_j}$. 
See Lemma \ref{adj} below.  
Since $A_j$ is a curve, $\mathcal T$ is a skyscraper 
sheaf on $A_j$. So, $H^0(A_j, \omega_{A_j})\ne 0$ implies 
$$\Hom (\mathcal O_{A_j}, \mathcal O_X(K_X+A_j)\otimes \mathcal O_{A_j})
\simeq H^0(A_j, \mathcal O_X(K_X+A_j)\otimes \mathcal O_{A_j})\ne 0. $$ 
More precisely, we can lift every section in 
$H^0(A_j, \omega_{A_j})$ to 
$$H^0(A_j, \mathcal O_X(K_X+A_j)\otimes \mathcal O_{A_j})$$ by 
$H^1(A_j, \mathcal T)=0$. 
Therefore, we obtain an inclusion map 
$$
\mathcal O_{A_j}\to \mathcal O_X(n(K_X+A_j))\otimes \mathcal O_{A_j}\simeq 
\mathcal O_{A_j}(n(K_X+\Delta))
$$ for 
a divisible positive integer $n$. 
Since $A_j\cdot (K_X+\Delta)=0$, we see that 
$\mathcal O_{A_j}(n(K_X+\Delta))\simeq 
\mathcal O_{A_j}$. 
\end{say}

The following example may help us understand the case 
when $A_j\ne \mathbb P^1$ in Step \ref{t3}. 

\begin{ex} We consider $C:=(zy^2=x^3)\subset \mathbb P^2=:X$. 
Then $(X, C)$ is not log canonical at 
$P=(0: 0: 1)$. 
On the other hand, $(K_X+C)|_C=K_C\sim 0$ by adjunction. 
\end{ex}

\begin{rem}\label{rem-rem} 
Let $f:(X, \Delta)\to (X', \Delta')$ 
be a proper birational 
morphism between log surfaces such that 
$K_X+\Delta=f^*(K_{X'}+\Delta')$. 
Let $\mathcal C$ be a non-klt center of the pair 
$(X, \Delta)$. 
Then it is obvious that $f(\mathcal C)$ is a non-klt center of 
the pair $(X', \Delta')$. 
Since $\Bs |n(K_X+\Delta)|=f^{-1}\Bs |n(K_{X'}+\Delta')|$ for 
every divisible positive integer 
$n$, 
$\Bs|n(K_X+\Delta)|$ contains no 
non-klt centers of $(X, \Delta)$ if $\Bs |n(K_{X'}+\Delta')|$ contains no 
non-klt centers of $(X', \Delta')$.  
\end{rem}

\begin{say}\label{t2}
If $\Nlc (X, \Delta)\cap A_j=\emptyset$, then 
$\mathcal O_{A_j}(n(K_X+\Delta))\simeq \mathcal O_{A_j}$ for some 
divisible positive integer $n$ 
by the abundance theorem for semi log canonical 
curves (cf.~\cite{fujino1} and \cite[Theorem 1.3]{fuji-gon}). 
\end{say}
 
Anyway, we obtain $\mathcal O_{A}(n(K_X+\Delta))\simeq \mathcal O_A$ for a 
divisible positive integer $n$. 

\begin{say}\label{t4} 
We have $A\cap \Bs|n(K_X+\Delta)|=\emptyset$. 
\end{say}
\begin{proof}[Proof of {\em{Step \ref{t4}}}]
Let $f:Y\to X$ be a resolution such that 
$K_Y+\Delta_Y=f^*(K_X+\Delta)$. 
We may assume that 
\begin{itemize}
\item[(1)] $f^{-1}(A)$ has simple normal crossing 
support, and 
\item[(2)] $\Supp f^{-1}_*\Delta\cup \Exc (f)$ is a 
simple normal crossing divisor on $Y$. 
\end{itemize}
Let $W_1$ be the union of the irreducible components 
of $\Delta^{= 1}_Y$ which 
are mapped into $A$ by $f$. 
We write $\Delta^{= 1}_Y=W_1+W_2$. 
Then 
$$
-W_1-\llcorner \Delta^{>1}_Y
\lrcorner+\ulcorner -(\Delta^{<1}_Y)\urcorner-
(K_Y+\{\Delta_Y\}+W_2)\sim _{\mathbb Q}-f^*(K_X+\Delta). 
$$ 
We put 
$$\mathcal J_1=f_*\mathcal O_Y(-W_1-\llcorner \Delta^{>1}_Y\lrcorner+
\ulcorner -(\Delta^{<1}_Y)\urcorner)\subset \mathcal O_X. 
$$ 
Then we can easily check that 
$$
0\to \mathcal J_1\to \mathcal O_X(-A)\to \delta\to 0 
$$
is exact, where $\delta$ is a skyscraper sheaf, 
and 
$$H^i(X, \mathcal O_X(n(K_X+\Delta))\otimes \mathcal J_1)=0$$ for 
every $i>0$ by \ref{vani}, where 
$n$ is a divisible positive integer. 
By the above exact sequence, we obtain 
$$H^i(X, \mathcal O_X(n(K_X+\Delta))\otimes \mathcal O_X(-A))=0$$ for 
$i>0$. 
By this vanishing theorem, 
we see that the restriction map  
$$
H^0(X, \mathcal O_X(n(K_X+\Delta)))\to 
H^0(A, \mathcal O_A(n(K_X+\Delta)))
$$ is 
surjective. Since $\mathcal O_A(n(K_X+\Delta))\simeq \mathcal O_A$, 
we have $\Bs |n(K_X+\Delta)|\cap A=\emptyset$. 
\end{proof}

\begin{say}\label{t5}
Let $P$ be a zero-dimensional lc center of $(X, \Delta)$. 
Then $P\not \in \Bs|n(K_X+\Delta)|$, 
where $n$ is a divisible positive integer. 
\end{say}
\begin{proof}[Proof of {\em{Step \ref{t5}}}]
If $P\in A$, then it is obvious by Step \ref{t4}. So, we may assume that 
$P\cap \Supp A=\emptyset$. 
Let $f:Y\to X$ be the resolution as in the proof of Step \ref{t4}. 
We can further assume that 
\begin{itemize}
\item[(3)] $f^{-1}(P)$ has simple normal crossing support. 
\end{itemize}
Let $W_3$ be the union of the irreducible components of $\Delta^{=1}_Y$ which 
are mapped into $A\cup P$ by $f$. 
We put $\Delta^{=1}_Y=W_3+W_4$. 
Then $$
-W_3-\llcorner \Delta^{>1}_Y\lrcorner+\ulcorner -(\Delta^{<1}_Y)\urcorner-
(K_Y+\{\Delta_Y\}+W_4)\sim _{\mathbb Q}-f^*(K_X+\Delta). 
$$ 
We put $$\mathcal J_2=f_*\mathcal O_Y(-W_3-\llcorner \Delta^{>1}_Y
\lrcorner+\ulcorner -(\Delta^{<1}_Y)
\urcorner)\subset \mathcal O_X. $$  
Then, we have 
$$H^i(X, \mathcal O_X(n(K_X+\Delta))\otimes \mathcal J_2)=0$$ for 
every $i>0$ by \ref{vani}, 
where $n$ is a divisible positive integer. 
Thus, the restriction map 
$$
H^0(X, \mathcal O_X(n(K_X+\Delta)))\to H^0(X, 
\mathcal O_X(n(K_X+\Delta))\otimes \mathcal 
O_X/\mathcal J_2)
$$ 
is surjective. 
Therefore, the evaluation map 
$$
H^0(X, \mathcal O_X(n(K_X+\Delta)))\to 
\mathcal O_X(n(K_X+\Delta))\otimes \mathbb C(P)
$$ 
is surjective since $P\cap \Supp A=\emptyset$. 
So, we have $P\not \in \Bs|n(K_X+\Delta)|$. 
\end{proof}

\begin{say}\label{t6}
Let $P\in \Nlc (X, \Delta)$. 
Then $P\not\in \Bs|n(K_X+\Delta)|$. 
\end{say}
\begin{proof}[Proof of {\em{Step \ref{t6}}}]
If $P\in A$, then it is obvious by Step \ref{t4}. 
So, we may assume that 
$P\cap \Supp A=\emptyset$. 
By the proof of Step \ref{t4}, 
we obtain that the restriction map 
$$
H^0(X, \mathcal O_X(n(K_X+\Delta)))\to 
H^0(X, \mathcal O_X(n(K_X+\Delta))\otimes 
\mathcal O_X/\mathcal J_1)
$$ 
is surjective. 
Since $P\cap \Supp A=\emptyset$, we see that 
the evaluation map 
$$
H^0(X, \mathcal O_X(n(K_X+\Delta)))\to 
\mathcal O_X(n(K_X+\Delta))\otimes \mathbb C(P)
$$ 
is surjective. 
So, we have $P\not \in \Bs|n(K_X+\Delta)|$.
\end{proof}

\begin{say}\label{t7}
We see that $E_i\not \subset \Bs|n(K_X+\Delta)|$, 
where $E_i$ is any irreducible component of $B$ and 
$n$ is a divisible positive integer.  
\end{say}
\begin{proof}[Proof of {\em{Step \ref{t7}}}]
We may assume that $E_i\cap A=\emptyset$ by Step \ref{t4} 
and $(X, \Delta)$ is log canonical 
in a neighborhood of $E_i$ by Step \ref{t6}. 
We note that $\mathcal O_{E_i}(n(K_X+\Delta))$ is ample. 
So, $\mathcal O_{E_i}(n(K_X+\Delta))$ is generated by global sections. 
Let $f:Y\to X$ be the resolution as in the proof of Step \ref{t4}. 
We can further assume that 
\begin{itemize}
\item[(4)] $f^{-1}(E_i)$ has simple normal crossing support. 
\end{itemize}
Let $W_5$ be the union of the irreducible 
components of $\Delta^{=1}_Y$ which 
are mapped into $A\coprod E_i$ by $f$. 
We put $\Delta^{=1}_Y=W_5+W_6$. 
Then $$
-W_5-\llcorner \Delta^{>1}_Y\lrcorner+\ulcorner 
-(\Delta^{<1}_Y)\urcorner-
(K_Y+\{\Delta_Y\}+W_6)\sim _{\mathbb Q}-f^*(K_X+\Delta). 
$$ 
We put $$\mathcal J_3=f_*\mathcal O_Y(-W_5-\llcorner \Delta^{>1}_Y
\lrcorner+\ulcorner -(\Delta^{<1}_Y)\urcorner)\subset \mathcal O_X.$$  
Then, we have 
$$H^i(X, \mathcal O_X(n(K_X+\Delta))\otimes \mathcal J_3)=0$$ for 
every $i>0$ by \ref{vani}, 
where $n$ is a divisible positive integer. 
We note that there exists a short exact sequence 
$$
0\to \mathcal J_3\to \mathcal O_X(-A-E_i)\to \delta'\to 0, 
$$ 
where $\delta'$ is a skyscraper sheaf on $X$. 
Thus, 
$$H^i(X, \mathcal O_X(n(K_X+\Delta))\otimes \mathcal O_X(-A-E_i))=0$$ for 
every $i>0$, 
Therefore, the restriction map 
$$
H^0(X, \mathcal O_X(n(K_X+\Delta)))\to 
H^0(E_i, \mathcal O_{E_i}(n(K_X+\Delta)))
$$ 
is surjective since $\Supp E_i\cap \Supp A=\emptyset$. 

This implies that $E_i\not\subset \Bs |n(K_X+\Delta)|$ for 
every irreducible component $E_i$ of $B$. 
\end{proof}

Therefore, we have checked that 
$\Bs|n(K_X+\Delta)|$ contains no non-klt centers of $(X, \Delta)$. 

Finally, we will prove that 
$K_X+\Delta$ is semi-ample. 

\begin{say} 
If $|n(K_X+\Delta)|$ is free, then 
there are nothing to prove. 
So, we assume that $\Bs|n(K_X+\Delta)|\ne\emptyset$. 
We take general members $\Xi_1, \Xi_2, \Xi_3 \in |n(K_X+\Delta)|$ and 
put $\Theta =\Xi_1+\Xi_2+\Xi_3$. 
Then $\Theta$ contains no non-klt centers of $(X, \Delta)$ 
and $K_X+\Delta+\Theta$ is not lc at the generic point of 
any irreducible component of $\Bs|n(K_X+\Delta)|$ 
(see, for example, \cite[Lemma 13.2]{fujino2}). 
We put 
$$
c=\max \{ t \in \mathbb R \,|\, K_X+\Delta+t\Theta {\text{ is 
lc outside }} \Nlc (X, \Delta)\}. 
$$ 
Then we can easily check that $c\in \mathbb Q$ and $0<c<1$. 
In this case, 
$$
K_X+\Delta+c\Theta\sim _{\mathbb Q}(1+cn)(K_X+\Delta)
$$ 
and there exists an lc center $\mathcal C$ of 
$(X, \Delta+c\Theta)$ contained 
in $\Bs|(n(K_X+\Delta)|$. 
We take positive integer $l$ and $m$ such that 
$$
l(K_X+\Delta +c\Theta)\sim mn(K_X+\Delta). 
$$ 
Replace $n(K_X+\Delta)$ with $l(K_X+\Delta+c\Theta)$ and 
apply the previous arguments. 
Then, we obtain $\mathcal C\not\subset \Bs|kl(K_X+\Delta +c\Theta)|$ for 
some positive integer $k$. 
Therefore, we have   
$$\Bs |kmn(K_X+\Delta)|\subsetneq \Bs |n(K_X+\Delta)|.  
$$ 
It is because there is an lc center $\mathcal C$ of 
$(X, \Delta +c\Theta)$ such that 
$\mathcal C\subset \Bs|n(K_X+\Delta)|$, and 
$l(K_X+\Delta +c\Theta)\sim mn(K_X+\Delta)$. 
By noetherian induction, we obtain that 
$(K_X+\Delta)$ is semi-ample. 
\end{say}
We finish the proof of Theorem \ref{41}. 
\end{proof}

We used the following lemma in the proof of Theorem \ref{41}. 

\begin{lem}[Adjunction]\label{adj} 
Let $X$ be a normal projective surface and 
let $D$ be a pure one-dimensional 
reduced irreducible 
closed subscheme. 
Then we have the following short exact sequence{\em{:}} 
$$
0\to \mathcal T\to \omega_X(D)\otimes \mathcal O_D
\to \omega_D\to 0, 
$$ 
where $\mathcal T$ is the torsion part of 
$\omega_X(D)\otimes \mathcal O_D$. 
In particular, $\mathcal T$ is a skyscraper sheaf on $D$. 
\end{lem}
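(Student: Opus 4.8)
The plan is to run the argument through Grothendieck duality. Since $X$ is a normal projective surface it is Cohen--Macaulay, with $\omega_X=\mathcal O_X(K_X)$ a dualizing sheaf, and since $D$ is a reduced irreducible curve it is Cohen--Macaulay of pure dimension one, so its dualizing sheaf is $\omega_D=\mathcal{E}xt^1_{\mathcal O_X}(\mathcal O_D,\omega_X)$ while $\mathcal{H}om_{\mathcal O_X}(\mathcal O_D,\omega_X)=0$ (the torsion sheaf $\mathcal O_D$ admits no nonzero map to the torsion-free sheaf $\omega_X$). Applying $R\mathcal{H}om_{\mathcal O_X}(-,\omega_X)$ to $0\to \mathcal I_D\to \mathcal O_X\to \mathcal O_D\to 0$ and using $\mathcal{E}xt^{>0}_{\mathcal O_X}(\mathcal O_X,-)=0$ together with these two facts, the long exact sequence collapses to $0\to \omega_X\to \mathcal{H}om_{\mathcal O_X}(\mathcal I_D,\omega_X)\to \omega_D\to 0$. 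Because $X$ is normal, $\mathcal I_D$ agrees with the divisorial sheaf $\mathcal O_X(-D)$ outside a finite set and $\omega_X$ is reflexive ($S_2$), so $\mathcal{H}om_{\mathcal O_X}(\mathcal I_D,\omega_X)=\mathcal{H}om_{\mathcal O_X}(\mathcal O_X(-D),\omega_X)=\omega_X(D)$, the reflexive divisorial sheaf $\mathcal O_X(K_X+D)$; this produces the ``adjunction sequence'' $0\to \omega_X\to \omega_X(D)\to \omega_D\to 0$.

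Next I would restrict this sequence to $D$, i.e.\ tensor it with $\mathcal O_D$ over $\mathcal O_X$. Using $\omega_D\otimes_{\mathcal O_X}\mathcal O_D=\omega_D$ one gets an exact sequence $\omega_X\otimes\mathcal O_D\to \omega_X(D)\otimes\mathcal O_D\to \omega_D\to 0$, so $\omega_X(D)\otimes\mathcal O_D\to\omega_D$ is surjective and its kernel $\mathcal K$ is the image of $\omega_X\otimes\mathcal O_D$. On the open set $U\subset X$ where $\mathcal O_X(D)$ is invertible (the complement of finitely many points) the inclusion $\omega_X\hookrightarrow\omega_X(D)$ is locally multiplication by a defining equation of the Cartier divisor $D\cap U$, hence becomes the zero map after $\otimes\mathcal O_D$; therefore $\mathcal K$ is supported on a finite set, in particular it is a torsion (indeed skyscraper) sheaf.

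Finally, since $D$ is reduced of pure dimension one it is Cohen--Macaulay, so $\omega_D$ is a maximal Cohen--Macaulay, hence torsion-free, $\mathcal O_D$-module. Therefore the torsion part $\mathcal T$ of $\omega_X(D)\otimes\mathcal O_D$ is killed by the surjection onto $\omega_D$, giving $\mathcal T\subseteq\mathcal K$, while $\mathcal K\subseteq\mathcal T$ since $\mathcal K$ is torsion; hence $\mathcal K=\mathcal T$ and we obtain the asserted exact sequence $0\to\mathcal T\to\omega_X(D)\otimes\mathcal O_D\to\omega_D\to 0$ with $\mathcal T$ a skyscraper sheaf. The one point requiring genuine care is the identification $\mathcal{H}om_{\mathcal O_X}(\mathcal I_D,\omega_X)=\omega_X(D)$ in the first paragraph: one must check that replacing $\mathcal I_D$ by its reflexive hull $\mathcal O_X(-D)$ does not change $\mathcal{H}om(-,\omega_X)$, which rests on the facts that the two sheaves differ only at finitely many points and that $\omega_X$ is $S_2$; everything else is formal homological algebra.
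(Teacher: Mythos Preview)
Your proof is correct and follows essentially the same route as the paper's: both derive the adjunction sequence $0\to\omega_X\to\omega_X(D)\to\omega_D\to 0$ by applying $\mathcal{E}xt^{\bullet}_{\mathcal O_X}(-,\omega_X)$ to the ideal sequence of $D$, then compare with the restriction to $D$ to see that the kernel of $\omega_X(D)\otimes\mathcal O_D\to\omega_D$ is a skyscraper sheaf, hence equals the torsion part since $\omega_D$ is torsion-free. The paper organizes the comparison via a commutative diagram built from the reflexive-hull map $\alpha:\omega_X(D)\otimes\mathcal O_X(-D)\to\omega_X$, whereas you tensor the adjunction sequence directly with $\mathcal O_D$; this is only a presentational difference.
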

\begin{proof}
We consider the following short exact sequence 
$$
0\to \mathcal O_X(-D)\to \mathcal O_X\to \mathcal O_D\to 0. 
$$ 
By tensoring $\omega_X(D)$, where 
$\omega_X(D)=(\omega_X\otimes \mathcal O_X(D))^{**}$, 
we obtain 
$$
\omega_X(D)\otimes \mathcal O_X(-D)\to \omega_X(D)\to 
\omega_X(D)\otimes \mathcal O_D\to 0. 
$$ 
On the other hand, by taking 
$\mathcal Ext^i_{\mathcal O_X}(\underline{\quad}, 
\omega_X)$, we obtain 
$$
0\to \omega_X\to \omega_X(D)\to \omega_D\simeq 
\mathcal Ext^1_{\mathcal O_X}(\mathcal O_D, \omega_X)\to 0. 
$$
Note that $\omega_X(D)\simeq \mathcal Hom_{\mathcal O_X}
(\mathcal O_X(-D), \omega_X)$. 
The natural 
homomorphism 
$$
\alpha :\omega_X(D)\otimes \mathcal O_X(-D)\to 
\omega_X\simeq (\omega_X(D)\otimes \mathcal O_X(-D))^{**}
$$ 
induces the following commutative 
diagram. 
$$
\xymatrix{
& & & 0\ar[d]& \\ 
& & & \mathcal T\ar[d]& \\
 & \omega_X(D)\otimes \mathcal O_X(-D)\ar[d]^{\alpha }\ar[r]&
 \omega_X(D)\ar@{=}[d]\ar[r]&\omega_X(D)\otimes \mathcal O_D \ar[d]
\ar[r] &0\\
0\ar[r] & \omega_X\ar[d]
\ar[r]&\omega_X(D)\ar[r]& \omega_D\ar[r]\ar[d] & 0\\ 
& \mathcal T\ar[d]& & 0& 
\\ & 0&&& }
$$
It is easy to see that 
$\alpha$ is surjective in codimension one and 
$\mathcal T$ is the torsion part of 
$\omega_X(D)\otimes \mathcal O_D$. 
\end{proof}

The next theorem is a generalization of 
Fujita's result in \cite{fujita}. 

\begin{thm}[Finite generation of log canonical rings]\label{43} 
Let $(X, \Delta)$ be a $\mathbb Q$-factorial 
projective log surface such that 
$\Delta$ is a $\mathbb Q$-divisor. 
Then the log canonical 
ring 
$$
R(X, \Delta)=\bigoplus _{m\geq 0}
H^0(X, \mathcal O_X(\llcorner m(K_X+\Delta)\lrcorner))
$$ 
is a finitely generated $\mathbb C$-algebra. 
\end{thm}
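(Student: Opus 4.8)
The plan is to run the log minimal model program with respect to $K_X+\Delta$ over $\Spec \mathbb C$, as provided by Theorem \ref{thm32} (Case (A), since $X$ is $\mathbb Q$-factorial), and reduce the finite generation statement to the nef case on the output of the program. The first point to settle is that the log canonical ring is a birational invariant along the steps of the minimal model program: if $\varphi:(X,\Delta)\dashrightarrow (X_1,\Delta_1)$ is one of the extremal contractions appearing in Theorem \ref{thm32}, with $\Delta_1=\varphi_*\Delta$, then $R(X,\Delta)\cong R(X_1,\Delta_1)$. For divisorial contractions this follows because $\varphi$ is a birational morphism contracting a $(K_X+\Delta)$-negative curve $E$, and for a Weil divisor $D$ that is $\varphi$-nef (in particular for $D=K_X+\Delta$ pulled back from below) one has $\varphi_*\mathcal O_X(\llcorner mD\lrcorner)=\mathcal O_{X_1}(\llcorner m(K_{X_1}+\Delta_1)\lrcorner)$ by the negativity lemma together with $\varphi_*\mathcal O_X=\mathcal O_{X_1}$; since $\varphi$ is birational, $H^0$ is unchanged. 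Throughout, $X_i$ stays $\mathbb Q$-factorial and $\Delta_i$ stays a $\mathbb Q$-divisor (with coefficients $\le 1$), so each ring $R(X_i,\Delta_i)$ makes sense.

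Next I would dispose of the non-pseudo-effective case. If $K_X+\Delta$ is not pseudo-effective, the program terminates with a Mori fiber space $g:X^*\to C$ with $\dim C<2$ and $-(K_{X^*}+\Delta^*)$ $g$-ample; then $K_{X^*}+\Delta^*$ restricted to a general fiber of $g$ has negative degree, so $H^0(X^*,\mathcal O_{X^*}(\llcorner m(K_{X^*}+\Delta^*)\lrcorner))=0$ for all $m>0$. Hence $R(X^*,\Delta^*)=R(X,\Delta)=\mathbb C$, which is trivially finitely generated. So we may assume $K_X+\Delta$ is pseudo-effective, and the program ends with a minimal model $(X^*,\Delta^*)$ with $K_{X^*}+\Delta^*$ nef (Theorem \ref{thm32} (1)); by the invariance above it suffices to prove $R(X^*,\Delta^*)$ is finitely generated. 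Replacing the pair, we may assume from the start that $X$ is $\mathbb Q$-factorial projective, $\Delta$ is a $\mathbb Q$-divisor with coefficients $\le 1$, and $K_X+\Delta$ is nef.

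Now I split on the numerical Kodaira dimension $\nu=\nu(X,K_X+\Delta)\in\{0,1,2\}$. If $\nu=2$, then $K_X+\Delta$ is nef and big, so by Theorem \ref{41} it is semi-ample; a semi-ample $\mathbb Q$-Cartier divisor gives a finitely generated section ring by the standard argument (pull back from the ample model $\Proj R$ where the ring is a Veronese of the homogeneous coordinate ring of a projective variety). The cases $\nu=0$ and $\nu=1$ are where the real content lies, and this is the step I expect to be the main obstacle; it requires a non-vanishing / abundance input for log surfaces, namely that a nef $K_X+\Delta$ on a $\mathbb Q$-factorial projective log surface is in fact semi-ample (Theorem \ref{aban}, announced in Remark \ref{Z} and proved later in the paper). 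Granting log abundance in this form, semi-ampleness of $K_X+\Delta$ again yields finite generation of $R(X,\Delta)$ by the same Veronese argument as in the big case. Thus the proof structure is: (i) birational invariance of the log canonical ring along the MMP; (ii) reduce to $K_X+\Delta$ nef; (iii) invoke semi-ampleness — via Theorem \ref{41} when big, via the log abundance theorem otherwise — and conclude finite generation from semi-ampleness.
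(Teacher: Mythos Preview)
Your argument is correct, but it diverges from the paper's in one substantive way: you case-split on the numerical Kodaira dimension $\nu$ and then appeal to the full log abundance theorem (Theorem \ref{aban}) to handle $\nu\le 1$. The paper instead splits on the \emph{Iitaka} dimension $\kappa(X,K_X+\Delta)$ after passing to a minimal model, which lets it avoid any forward reference to abundance. When $\kappa=-\infty$ the ring is $\mathbb C$; when $\kappa=0$ the ring is trivially finitely generated because $|m(K_X+\Delta)|$ has at most one element for each $m$ (no semi-ampleness needed); when $\kappa=1$ one checks directly that $\nu=1$ as well and invokes Fujita's classical result \cite[(4.1) Theorem]{fujita} to get semi-ampleness; only for $\kappa=2$ is Theorem \ref{41} used. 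The upshot is that the paper's proof of Theorem \ref{43} is logically prior to Sections \ref{sec5} and \ref{sec6}, whereas your route makes Theorem \ref{43} essentially a corollary of Theorem \ref{aban}. Your approach is cleaner if one is willing to take abundance as given; the paper's is more economical in that it isolates exactly how little is needed for finite generation, and in particular shows that the hard $\kappa=0$ abundance statement (Theorem \ref{final}) is irrelevant for this purpose.
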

\begin{proof}
Without loss of generality, we may assume that 
$\kappa (X, K_X+\Delta)\geq 0$. 
By Theorem \ref{thm32}, 
we may further assume that $K_X+\Delta$ is nef. 
If $K_X+\Delta$ is big, then $K_X+\Delta$ is semi-ample by 
Theorem \ref{41}. 
Therefore, $R(X, \Delta)$ is finitely generated. 
If $\kappa (X, K_X+\Delta)=1$, 
then we can easily check that 
$\kappa (X, K_X+\Delta)=\nu(X, K_X+\Delta)=1$ and 
that 
$K_X+\Delta$ is semi-ample (cf.~\cite[(4.1) Theorem]{fujita}). 
So, $R(X, \Delta)$ is finitely generated. 
If $\kappa (X, K_X+\Delta)=0$, then 
it is obvious that 
$R(X, \Delta)$ is finitely generated. 
\end{proof}

As a corollary, we obtain the finite generation of canonical 
rings for projective surfaces with only rational singularities. 

\begin{cor}\label{44} 
Let $X$ be a projective surface with only rational singularities. 
Then the canonical ring 
$$
R(X)=\bigoplus _{m\geq 0} H^0(X, \mathcal O_X(mK_X))
$$ 
is a finitely generated $\mathbb C$-algebra.
\end{cor}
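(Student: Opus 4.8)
The plan is to reduce immediately to Theorem \ref{43}. First, recall the standard fact (already noted in the remark preceding Proposition \ref{2626}) that a surface with only rational singularities is automatically $\mathbb Q$-factorial; in particular $K_X$ is $\mathbb R$-Cartier (indeed $\mathbb Q$-Cartier). Hence $(X,0)$ is a $\mathbb Q$-factorial projective log surface with boundary $\Delta=0$, which is trivially a $\mathbb Q$-divisor. Applying Theorem \ref{43} with $\Delta=0$ gives at once that
$$
R(X)=\bigoplus_{m\geq 0}H^0(X,\mathcal O_X(mK_X))=R(X,0)
$$
is a finitely generated $\mathbb C$-algebra, since $\llcorner mK_X\lrcorner=mK_X$ when $\Delta=0$. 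That is the whole argument.

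The only point requiring a word of justification is the passage from "rational singularities" to "$\mathbb Q$-factorial." I would recall the classical reason: if $P\in X$ is a rational singularity and $f\colon Y\to X$ is a resolution, then $R^1f_*\mathcal O_Y=0$, so for any Weil divisor $D$ the obstruction in $R^1f_*\mathcal O_Y(-f^{-1}_*D)$ to extending a Cartier multiple lives in a group that vanishes after passing to a suitable finite neighborhood, forcing some multiple of $D$ to be Cartier near $P$. Equivalently, one uses that the local class group $\mathrm{Cl}(\mathcal O_{X,P})$ is finite for a rational surface singularity (Lipman, Brieskorn, Mumford), which gives $\mathbb Q$-factoriality directly. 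No new ideas are needed here beyond citing this.

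There is really no serious obstacle: this corollary is a formal specialization of Theorem \ref{43}, and the surface being projective (rather than merely complete) is also automatic by Lemma \ref{222} once $\mathbb Q$-factoriality is in hand, so even that hypothesis is self-contained. The subtlety, if any, lies entirely upstream — in Theorem \ref{41} and Theorem \ref{43}, where the genuine work (the vanishing-theorem argument establishing semi-ampleness in the big case, and Fujita-type abundance in the $\kappa=1$ case) takes place — and that has already been carried out. So I would simply state: "Since $X$ has only rational singularities, $X$ is $\mathbb Q$-factorial (see the remark above). Thus $K_X$ is $\mathbb Q$-Cartier and we may apply Theorem \ref{43} to $(X,0)$."
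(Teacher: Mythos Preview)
Your proposal is correct and matches the paper's own (implicit) argument: the paper states Corollary \ref{44} immediately after Theorem \ref{43} without proof, relying precisely on the fact---already recorded in the introduction and in the remark preceding Proposition \ref{2626}---that a surface with only rational singularities is $\mathbb Q$-factorial, so that Theorem \ref{43} applies to $(X,0)$. There is nothing to add; your extra remarks about Lemma \ref{222} and the Lipman--Mumford finiteness of the local class group are accurate but not needed beyond what the paper already assumes.
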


\begin{rem}
In Theorems \ref{41} and \ref{43}, the assumption that $\Delta$ is a 
boundary $\mathbb Q$-divisor is crucial. 
By Zariski's example, we can easily construct a smooth 
projective surface $X$ and an 
effective $\mathbb Q$-divisor $\Delta$ on $X$ 
such that $\Supp \Delta$ is simple normal crossing, $K_X+\Delta$ 
is nef and big, and 
$$
R(X, \Delta)=\bigoplus _{m\geq 0}H^0(X, \mathcal O_X(\llcorner 
m(K_X+\Delta)\lrcorner))
$$ 
is not a finitely generated $\mathbb C$-algebra. 
Of course, $K_X+\Delta$ is not semi-ample. 
See, for example, \cite[2.3.A Zariski's Construction]{laz}.  
\end{rem}

\section{Non-vanishing theorem}\label{sec5} 

In this section, we prove the following non-vanishing theorem. 

\begin{thm}[Non-vanishing theorem]\label{thm-non} 
Let $(X, \Delta)$ be a $\mathbb Q$-factorial 
projective log surface such that 
$\Delta$ is a $\mathbb Q$-divisor. 
Assume that $K_X+\Delta$ is pseudo-effective. 
Then $\kappa (X, K_X+\Delta)\geq 0$. 
\end{thm}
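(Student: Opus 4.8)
The plan is to reduce the statement to the classical non-vanishing situation for smooth surfaces and then invoke known results about the Iitaka dimension of pseudo-effective adjoint divisors. First I would pass to the minimal resolution $f : Y \to X$ with $K_Y + \Delta_Y = f^*(K_X+\Delta)$, where $\Delta_Y$ is effective since $f$ is the minimal resolution. Because $\kappa(X, K_X+\Delta) = \kappa(Y, K_Y+\Delta_Y)$ (the global sections of multiples of $f^*(K_X+\Delta)$ and of $K_Y+\Delta_Y$ agree, as $f$ is birational and $\Delta_Y$ effective $f$-exceptional up to the strict transform of $\Delta$), it suffices to prove the statement on $Y$. However, $\Delta_Y$ need not be a boundary divisor: the minimal resolution of a non-lc point produces coefficients $> 1$. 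So I would instead run the $(K_X+\Delta)$-minimal model program from Theorem \ref{thm32}, Case (A): since $K_X+\Delta$ is pseudo-effective, the program terminates (as in Theorem \ref{thm32}) with a minimal model $(X^*, \Delta^*)$ on which $K_{X^*}+\Delta^*$ is nef, and $\kappa$ is preserved under each step of the MMP. Thus we may assume from the start that $X$ is $\mathbb Q$-factorial and $K_X+\Delta$ is nef.

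Now with $L := K_X + \Delta$ nef, the dichotomy is by the numerical Kodaira dimension $\nu(X, L) \in \{0, 1, 2\}$. If $\nu(X,L) = 2$, then $L$ is nef and big, hence $\kappa(X,L) = 2 \geq 0$ (a nef and big divisor has maximal Iitaka dimension; alternatively this follows from Theorem \ref{41} giving semi-ampleness). If $\nu(X, L) = 0$, then $L \equiv 0$; passing to the minimal resolution $Y$, $f^*L \equiv 0$ is a nef divisor numerically trivial, and $K_Y + \Delta_Y \equiv 0$ with $\Delta_Y \geq 0$, so $\kappa(Y, K_Y) = \kappa(Y, K_Y+\Delta_Y - \Delta_Y) \leq \kappa(Y, f^*L) $; here one shows $\kappa(X, L) = 0$ directly because $L$ numerically trivial on a surface with $\kappa \geq 0$ forces $L \sim_{\mathbb Q} 0$ — more carefully, one should rule out $\kappa = -\infty$, which is where the real argument lies. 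The main case is $\nu(X, L) = 1$.

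For $\nu(X, L) = 1$, I would again work on the minimal resolution $Y$ and write $K_Y + \Delta_Y = f^*L$ with $f^*L$ nef, $\nu(Y, f^*L) = 1$. Here $\Delta_Y$ is effective but possibly with some coefficients exceeding $1$; write $\Delta_Y = \Delta_Y^{\leq 1} + (\Delta_Y^{>1} - \lceil \text{support} \rceil\text{-part})$, or more simply decompose $\Delta_Y = B + N$ where $B$ is the boundary part ($0 \le$ coefficients $\le 1$) and $N \ge 0$ is supported on the curves with coefficient $> 1$ minus their reduced part; since $N$ is $f$-exceptional and effective, $\kappa(Y, K_Y + B) \le \kappa(Y, K_Y + \Delta_Y)$, so it suffices to produce a section of some multiple of $K_Y + B$. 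Now $(Y, B)$ is a smooth surface with a boundary $\mathbb Q$-divisor, and $K_Y + B$ is $\mathbb Q$-linearly equivalent to $f^*L - N$, which need not be nef, but it is pseudo-effective; applying the classical non-vanishing / abundance theory for adjoint divisors on smooth surfaces — or, staying within this paper's framework, running the $(K_Y+B)$-MMP, which is legitimate by Theorem \ref{thm32}(B) after checking $(Y,B)$ is lc, and landing on a minimal model where one uses Fujita's result \cite[(4.1) Theorem]{fujita} (quoted in the proof of Theorem \ref{43}) that a nef divisor of numerical dimension $1$ on a smooth projective surface with $\kappa \ge 0$ is semi-ample — gives $\kappa \ge 0$, provided we first know $\kappa \ne -\infty$.

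The crux, therefore, is genuinely the step $\nu(X, L) \in \{0, 1\} \Rightarrow \kappa(X, L) \ge 0$, i.e.\ ruling out $\kappa = -\infty$ when $L = K_X+\Delta$ is nef but not big. I expect this to be handled by the Riemann--Roch / Euler characteristic argument on the minimal resolution: using the vanishing theorem \ref{vani} (or Proposition \ref{2626} to control $R^if_*$) to show $\chi(Y, mf^*(K_Y+B)) $ grows, combined with the boundedness of $h^2$ via Serre duality (here one needs that $-(K_Y + B)$ cannot be effective, which follows from pseudo-effectivity of $K_X + \Delta$), forcing $h^0(Y, m(K_Y+B)) > 0$ for $m \gg 0$. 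This is the one place where a careful case analysis — separating $\kappa(Y, K_Y) = -\infty$ (ruled surfaces / $\mathbb P^2$, where pseudo-effectivity of the adjoint pins down the boundary) from $\kappa(Y, K_Y) \ge 0$ — is unavoidable, and it is the main obstacle. Everything else is a formal consequence of the MMP (Theorem \ref{thm32}), the semi-ampleness result (Theorem \ref{41}), and Fujita's numerical-dimension-one result already cited in the paper.
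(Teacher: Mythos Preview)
Your proposal has a genuine gap in the reduction step. After passing to the minimal resolution $f:Y\to X$ with $K_Y+\Delta_Y=f^*(K_X+\Delta)$, you try to replace $\Delta_Y$ by its boundary truncation $B$ and absorb the excess $N=\Delta_Y-B\geq 0$ as an $f$-exceptional divisor. The inequality $\kappa(Y,K_Y+B)\leq\kappa(Y,K_Y+\Delta_Y)$ is fine, but to get $\kappa(Y,K_Y+B)\geq 0$ you would need $K_Y+B=f^*(K_X+\Delta)-N$ to be pseudo-effective, and this simply fails in general: subtracting an effective $f$-exceptional divisor from a nef pullback can land you outside the pseudo-effective cone. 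Your Riemann--Roch fallback on $K_Y+B$ has the same problem, since $(K_Y+B)^2=(K_X+\Delta)^2+N^2$ and $N^2<0$ by negative definiteness, so $\chi$ need not grow. The splitting by $\nu(X,K_X+\Delta)$ is also a detour: the $\nu=0$ and $\nu=1$ cases both collapse to the same question of producing a section, and you acknowledge you have not resolved it.

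The paper's proof avoids the truncation entirely and works directly with the non-boundary divisor $\Delta_Y$ on $Y$. After the MMP reduction to $K_X+\Delta$ nef (which you also do), it splits according to $\kappa(Y,K_Y)$. If $\kappa(Y,K_Y)\geq 0$ the claim is immediate. If $Y$ is rational, Riemann--Roch applied to the \emph{nef} divisor $K_Y+\Delta_Y$ (not to $K_Y+B$) gives $h^0>0$. The hard case is $Y$ an irrational ruled surface with Albanese fibration $p:Y\to C$, and here the paper uses the $\mathbb Q$-factoriality of $X$ in a way you do not anticipate: Lemma~\ref{lem-fac} shows that any curve on $Y$ dominating $C$ cannot be $f$-exceptional (the argument is that a general element of $\Pic^0(C)$ pulled back cannot lie in the $\mathbb Q$-span of the exceptional classes). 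Hence every horizontal component of $\Delta_Y$ has coefficient $\leq 1$, and after discarding vertical components one is in the boundary situation where Fujita's theorem applies. This geometric use of $\mathbb Q$-factoriality, not merely its role in enabling the MMP, is the missing idea in your proposal.
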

\begin{proof}
By Theorem \ref{thm32}, we may assume that 
$K_X+\Delta$ is nef. Let $f:Y\to X$ be 
the minimal resolution. We put $K_Y+\Delta_Y=f^*(K_X+\Delta)$. 
We note that $\Delta_Y$ is effective. 
If $\kappa (Y, K_Y)\geq 0$, then it is obvious 
that $$
\kappa (X, K_X+\Delta)=\kappa (Y, K_Y+\Delta_Y)\geq \kappa (Y, K_Y)\geq 0. 
$$ 
So, from now on, we assume $\kappa (Y, K_Y)=-\infty$. 
When $Y$ is rational, we can easily check 
$\kappa (Y, K_Y+\Delta_Y)\geq 0$ by the Riemann--Roch 
formula (see, for example, the proof of \cite[11.2.1 Lemma]{11}). 
Therefore, we may assume that $Y$ is an irrational  
ruled surface. 
Let $p:Y\to C$ be the Albanese fibration. 
We can write $K_Y+\Delta_Y=K_Y+\Delta _1+\Delta_2$, where 
$\Delta_1$ is an effective $\mathbb Q$-divisor 
on $Y$ such that $\Delta_1$ has no vertical components with respect to 
$p$, 
$0\leq \Delta_1\leq \Delta _Y$, $(K_Y+\Delta_1)\cdot F=0$ for 
any general fiber $F$ of $p$, and $\Delta_2=\Delta_Y-\Delta_1\geq 0$. 
When we prove $\kappa (Y, K_Y+\Delta_Y)\geq 0$, 
we can replace $\Delta_Y$ with $\Delta_1$ because 
$\kappa (Y, K_Y+\Delta_Y)\geq \kappa (Y, K_Y+\Delta_1)$. 
Therefore, we may assume that $\Delta_Y=\Delta_1$.  
By taking blow-ups, we can 
further assume that $\Supp \Delta_Y$ is smooth. 
We note the following easy but important lemma. 
\begin{lem}\label{lem-fac}
Let $B$ be any smooth irreducible curve on $Y$ such that 
$p(B)=C$. 
Then $B$ is not $f$-exceptional. 
\end{lem}
\begin{proof}[{Proof of {\em{Lemma \ref{lem-fac}}}}]
Let $\{E_i\}_{i\in I}$ be the set of all $f$-exceptional 
divisors. 
We consider the subgroup $G$ of $\Pic (B)$ generated by 
$\{\mathcal O_B(E_i)\}_{i\in I}$. 
Let $\mathcal L=\mathcal O_C(D)$ be a sufficiently general 
member of $\Pic ^0(C)$. 
We note that the genus $g(C)$ of $C$ is positive. 
Then 
$$(p|_B)^*\mathcal L\in \Pic ^0(B)
\otimes _{\mathbb Z}\mathbb Q\setminus 
G\otimes _{\mathbb Z}\mathbb Q. $$ 
Suppose that $B$ is $f$-exceptional. 
We consider $E=p^*D$ on $Y$. 
Since $X$ is $\mathbb Q$-factorial, 
$$
E\sim _{\mathbb Q}f^*f_*E+\sum _{i\in I}a_i E_i 
$$ with 
$a_i\in \mathbb Q$ for every $i$. 
By restricting the above relation to $B$, we obtain 
$(p|_B)^*\mathcal L\in G\otimes _{\mathbb Z}\mathbb Q$. 
It is a contradiction. Therefore, $B$ is not $f$-exceptional.  
\end{proof}
Thus, every irreducible component $B$ of $\Delta_Y$ 
is not $f$-exceptional. So, its coefficient in $\Delta_Y$ 
is not greater than one because 
$\Delta$ is a boundary $\mathbb Q$-divisor. 
By applying \cite[(2.2) Theorem]{fujita}, 
we obtain that 
$\kappa (Y, K_Y+\Delta_Y)\geq 0$. 
We finish the proof. 
\end{proof}

In \cite{tanaka}, Hiromu Tanaka generalizes Lemma \ref{lem-fac} 
as follows. It is one of the key observations for the 
minimal model theory of log surfaces in positive characteristic.  

\begin{thm}\label{thm53}
Let $k$ be an algebraically closed filed of any characteristic 
such that $k\ne \overline{\mathbb F}_p$. 
We assume that everything is defined over $k$ in this theorem.  
Let $X$ be a $\mathbb Q$-factorial projective 
surface and let $f:Y\to X$ be a projective 
birational 
morphism from a smooth projective surface $Y$. Let $p:Y\to C$ be 
a projective surjective morphism onto a projective smooth 
curve $C$ with the genus $g(C)\geq 1$. 
Then every $f$-exceptional curve $E$ on $Y$ is contained in a fiber of 
$p:Y\to C$. 
\end{thm}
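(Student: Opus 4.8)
The plan is to run the argument of Lemma~\ref{lem-fac} with the complex-analytic genericity statement about $\Pic^0(C)$ replaced by one valid over an arbitrary algebraically closed $k\neq\overline{\mathbb F}_p$. Assume for contradiction that some $f$-exceptional prime divisor $E$ on $Y$ dominates $C$; then $p|_E\colon E\to C$ is surjective. Let $\nu\colon\widetilde E\to E$ be the normalisation and $\pi:=(p|_E)\circ\nu\colon\widetilde E\to C$ the resulting finite surjective morphism of smooth projective curves. Let $\{E_i\}_{i\in I}$ be the finite set of $f$-exceptional prime divisors of $Y$ (so $E=E_{i_0}$ for some $i_0$), and let $G\subseteq\Pic(\widetilde E)$ be the subgroup generated by the classes $\nu^*(\mathcal O_Y(E_i)|_E)$, $i\in I$. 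Since $G$ is finitely generated, $G\otimes_{\mathbb Z}\mathbb Q$ is a finite-dimensional $\mathbb Q$-subspace of $\Pic(\widetilde E)\otimes_{\mathbb Z}\mathbb Q$.

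First I would secure the following key point: there exists $\mathcal L\in\Pic^0(C)$ with $\pi^*\mathcal L\notin G\otimes_{\mathbb Z}\mathbb Q$. Because $\pi^*\colon\Pic^0(C)\to\Pic^0(\widetilde E)$ has kernel annihilated by $\deg\pi$, hence finite, the induced map $\Pic^0(C)\otimes_{\mathbb Z}\mathbb Q\to\Pic(\widetilde E)\otimes_{\mathbb Z}\mathbb Q$ is injective, so it is enough to know that $\Pic^0(C)\otimes_{\mathbb Z}\mathbb Q$ has dimension larger than that of $G\otimes_{\mathbb Z}\mathbb Q$; in fact I would prove once and for all that $\Pic^0(C)\otimes_{\mathbb Z}\mathbb Q$ is infinite-dimensional whenever $g(C)\geq 1$ and $k\neq\overline{\mathbb F}_p$. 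This is precisely the point at which the hypothesis on $k$ is indispensable: over $\overline{\mathbb F}_p$ every $k$-point of the abelian variety $\Pic^0(C)$ is torsion, so $\Pic^0(C)\otimes_{\mathbb Z}\mathbb Q=0$ and the conclusion of the theorem genuinely fails. For $k\neq\overline{\mathbb F}_p$ I would descend $C$ to a subfield $k_0$ that is finitely generated over the prime field and that still has positive transcendence degree in characteristic $p$ (or contains $\overline{\mathbb Q}$ in characteristic $0$), and invoke the known infinitude of the Mordell--Weil rank of an abelian variety over the algebraic closure of such a field --- the Frey--Jarden theorem over $\overline{\mathbb Q}$ in characteristic zero, and its positive-characteristic counterpart over $\overline{\mathbb F_p(t)}$. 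I expect this rank statement to be \emph{the main obstacle}; it is the essential new ingredient compared with Lemma~\ref{lem-fac}, where over $\mathbb C$ one only needs that $\Pic^0(C)(\mathbb C)$ is a divisible group of uncountable rank.

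Granting such an $\mathcal L=\mathcal O_C(D)$ with $\deg D=0$, I would conclude exactly as in Lemma~\ref{lem-fac}. Set $D_Y:=p^*D$, a Cartier divisor on $Y$. As $X$ is $\mathbb Q$-factorial, $f_*D_Y$ is $\mathbb Q$-Cartier and $D_Y-f^*f_*D_Y$ is an $f$-exceptional $\mathbb Q$-divisor, so $D_Y=f^*f_*D_Y+\sum_{i\in I}a_iE_i$ for suitable $a_i\in\mathbb Q$. Restricting this identity of $\mathbb Q$-divisor classes to $E$ and pulling back by $\nu$, one gets an identity in $\Pic(\widetilde E)\otimes_{\mathbb Z}\mathbb Q$: the left-hand side is $\nu^*((p|_E)^*\mathcal O_C(D))=\pi^*\mathcal L$; the term coming from $f^*f_*D_Y$ is trivial because $f$ contracts $E$ to a point, so $\mathcal O_X(f_*D_Y)$ pulls back to a trivial bundle on $E$; and $\sum_i a_i\,\nu^*(\mathcal O_Y(E_i)|_E)$ lies in $G\otimes_{\mathbb Z}\mathbb Q$ by construction. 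Hence $\pi^*\mathcal L\in G\otimes_{\mathbb Z}\mathbb Q$, contradicting the choice of $\mathcal L$. Therefore no $f$-exceptional curve dominates $C$, i.e. every $f$-exceptional curve on $Y$ is contained in a fibre of $p$.
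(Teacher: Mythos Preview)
The paper does not prove Theorem~\ref{thm53}; it is quoted from Tanaka \cite{tanaka} as a generalization of Lemma~\ref{lem-fac}, with no argument supplied here. So there is no proof in the paper to compare against. Your proposal --- rerun the argument of Lemma~\ref{lem-fac}, replacing the complex-analytic genericity in $\Pic^0(C)$ by the statement that $\Pic^0(C)(k)\otimes_{\mathbb Z}\mathbb Q$ is infinite-dimensional for every algebraically closed $k\neq\overline{\mathbb F}_p$ --- is exactly the strategy the paper's framing suggests, and your deduction from that arithmetic input is correct. Passing to the normalisation $\widetilde E$ is the right adaptation, since in Lemma~\ref{lem-fac} the horizontal curve had already been arranged to be smooth, whereas an arbitrary $f$-exceptional curve need not be.

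The one place to tighten is the arithmetic input itself. As written, your descent sketch implicitly assumes that $C$ (or its Jacobian) can be moved down to $\overline{\mathbb Q}$ or to $\overline{\mathbb F_p(t)}$, which need not hold. What you actually need is the uniform statement: for any abelian variety $A$ of positive dimension over an algebraically closed field $k\neq\overline{\mathbb F}_p$, the group $A(k)$ has infinite rank. For uncountable $k$ this is immediate, since $A(k)$ is divisible and uncountable while its torsion subgroup is countable; for countable $k$ one descends $A$ to a subfield $k_0$ finitely generated over the prime field and applies the Frey--Jarden type results you cite to $A(\overline{k_0})\subseteq A(k)$. With the input phrased in this form, your argument goes through.
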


\section{Abundance theorem for log surfaces}\label{sec6} 

In this section, we prove the log 
abundance theorem for $\mathbb Q$-factorial 
projective log surfaces. 

\begin{thm}[Abundance theorem]\label{aban} 
Let $(X, \Delta)$ be a $\mathbb Q$-factorial projective 
log surface such that 
$\Delta$ is a $\mathbb Q$-divisor. 
Assume that $K_X+\Delta$ is nef. 
Then $K_X+\Delta$ is semi-ample. 
\end{thm}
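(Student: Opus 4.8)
The plan is to reduce the abundance statement to the cases already handled and to the nontrivial case where $K_X+\Delta$ is nef but not big. By Theorem~\ref{41}, if $K_X+\Delta$ is nef and big then it is semi-ample, so we may assume $\nu(X, K_X+\Delta)\le 1$. If $\nu(X, K_X+\Delta)=0$, then $K_X+\Delta$ is numerically trivial; by Theorem~\ref{thm-non} we have $\kappa(X, K_X+\Delta)\ge 0$, hence $K_X+\Delta$ is torsion (an effective divisor numerically equivalent to zero on a projective surface is zero up to linear equivalence after passing to a multiple, using that the image of $\Pic^0$ is divisible and the N\'eron--Severi part is rigid), and a torsion divisor is semi-ample. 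So the heart of the matter is the case $\nu(X, K_X+\Delta)=1$, and we must show $\kappa(X, K_X+\Delta)=1$ together with semi-ampleness.

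First I would pass to the minimal resolution $f\colon Y\to X$ with $K_Y+\Delta_Y=f^*(K_X+\Delta)$, $\Delta_Y$ effective, and note that semi-ampleness of $K_X+\Delta$ is equivalent to that of $K_Y+\Delta_Y$. Then I would invoke the nef dimension machinery recalled in \ref{2525}: let $g\colon Y\dashrightarrow Z$ be the reduction map associated to the nef divisor $K_Y+\Delta_Y$. Since $\nu=1$ we expect $n(Y, K_Y+\Delta_Y)=1$ as well (the inequalities $\kappa\le\nu\le n$ and a direct analysis of the fibers should force $n=1$ here, because $\nu=1$ already means $(K_Y+\Delta_Y)^2=0$ and $(K_Y+\Delta_Y)$ is not numerically trivial), so after resolving indeterminacy we get a fibration $h\colon Y'\to Z$ onto a curve with $K_{Y'}+\Delta_{Y'}$ numerically trivial on the general fiber. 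The strategy is then a relative-over-$Z$ argument: on the general fiber $F$ (a smooth curve) the restriction $(K_{Y'}+\Delta_{Y'})|_F$ has degree zero and, by adjunction plus the boundary condition on coefficients, is torsion; a canonical-bundle-formula / Fujita-type argument (the same circle of ideas as \cite[(2.2) Theorem and (4.1) Theorem]{fujita}) should then yield $\kappa(Y', K_{Y'}+\Delta_{Y'})\ge 1$, hence $\kappa=\nu=1$, i.e. $K_X+\Delta$ is abundant.

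Once abundance is known, semi-ampleness in the $\nu=1$ case should follow by running the argument of Step~\ref{t7}-type base-point-freeness along the fibration: the Iitaka fibration of $K_X+\Delta$ coincides birationally with the reduction map, $K_X+\Delta$ is numerically (hence, after taking a multiple and using the base curve, $\mathbb Q$-linearly) pulled back from the base curve, and a nef divisor on a curve is semi-ample; the vanishing theorem \ref{vani} handles the non-klt and boundary contributions exactly as in the proof of Theorem~\ref{41}. Alternatively, one can cite \cite[(4.1) Theorem]{fujita} directly for smooth surfaces and then descend along $f$, which is how Theorem~\ref{43} already treats $\kappa=1$; I would mirror that here.

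The main obstacle I anticipate is the $\nu=1$ case, specifically establishing $\kappa\ge 1$ and controlling the base of the fibration. The subtlety is that $(X,\Delta)$ carries no assumption on singularities in case (A) — it need not be lc — so the non-lc locus must be dealt with (it is zero-dimensional, which helps), and when passing to the minimal resolution the coefficients of $\Delta_Y$ along $f$-exceptional curves can exceed one, so the boundary condition is only available along the non-exceptional part; Lemma~\ref{lem-fac} is exactly the tool that rescues this in the irrational-ruled situation, and an analogous control will be needed here to legitimately apply Fujita's criterion. Getting the nef dimension equal to $1$ rather than $2$ (ruling out that $K_Y+\Delta_Y$ is ``big on the base'' in a degenerate way) is the other point that needs care; I would argue it from $(K_Y+\Delta_Y)^2=0$ together with the Hodge index theorem.
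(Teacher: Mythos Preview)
Your case split on $\nu$ and your handling of $\nu=2$ (Theorem~\ref{41}) and $\nu=0$ (non-vanishing plus ``effective and numerically trivial implies zero'') are fine and match the paper. The genuine gap is in the $\nu=1$ case: you assume that $\nu(X,K_X+\Delta)=1$ forces the nef dimension $n(X,K_X+\Delta)=1$, and your proposed justification via $(K_Y+\Delta_Y)^2=0$ and Hodge index does not work. The inequalities only give $\kappa\le\nu\le n$, so $\nu=1$ leaves open $n=2$, and Hodge index says nothing about the existence of a covering family of curves on which $K_Y+\Delta_Y$ has degree zero. Concretely, on $\mathbb P^2$ blown up at nine very general points, $-K_S$ is nef with $(-K_S)^2=0$ but $n(S,-K_S)=2$: the unique member of $|{-K_S}|$ is the only curve with $(-K_S)\cdot C=0$, and it does not move. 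Exactly this situation arises here.

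The paper's proof of Theorem~\ref{aban} organizes the cases by $\kappa$ rather than $\nu$; the $\kappa=2$ and $\kappa=1$ cases are dispatched by Theorem~\ref{41} and \cite[(4.1) Theorem]{fujita} respectively, and the real content is showing that $\kappa=0$ forces $K_X+\Delta\sim_{\mathbb Q}0$ (Theorem~\ref{final}). After passing to the minimal resolution and contracting suitable $(-1)$-curves, one assumes $\nu=1$ and must derive a contradiction. When $H^1(\mathcal O_S)=0$ the argument splits on $n$: the case $n=1$ is exactly your reduction-map argument, but the case $n=2$ requires Sakai's classification \cite[Propositions 4 and 5]{sakai1}, which pins $S$ down as a degenerate del Pezzo surface with $Z\in|{-nK_S}|$, and then a delicate tracking of the divisor $D\in|{-K_S}|$ back through the blow-ups to show its normal bundle is torsion, contradicting Sakai. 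The irrational-ruled case is handled separately via Lemma~\ref{lem-fac} and \cite[(5.9)]{fujita}. Your proposal contains no mechanism to treat $n=2$, and this is where the substantive work lies.
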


\begin{proof}
By Theorem \ref{thm-non}, we have 
$\kappa (X, K_X+\Delta)\geq 0$. 
If $\kappa (X, K_X+\Delta)=2$, 
then $K_X+\Delta$ is semi-ample by Theorem \ref{41}. 
If $\kappa (X, K_X+\Delta)=1$, then 
$\kappa (X, K_X+\Delta)=\nu(X, K_X+\Delta)=1$ and 
we can easily check that $K_X+\Delta$ is semi-ample 
(cf.~\cite[(4.1) Theorem]{fujita}). 
Therefore, all we have to do is to prove 
$K_X+\Delta\sim _{\mathbb Q}0$ when $\kappa (X, K_X+\Delta)=0$. 
It is Theorem \ref{final} below.  
\end{proof}

The proof of the following theorem depends on the argument in 
\cite[\S 5.~The case $\kappa =0$]{fujita} and 
Sakai's classification result in \cite{sakai1}. 

\begin{thm}\label{final}
Let $(X,\Delta)$ be a $\mathbb Q$-factorial 
projective log surface such that 
$\Delta$ is a $\mathbb Q$-divisor. 
Assume that 
$K_X+\Delta$ is nef and 
$\kappa (X, K_X+\Delta)=0$. 
Then $K_X+\Delta\sim _{\mathbb Q}0$. 
\end{thm}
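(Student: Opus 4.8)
The goal is to show that a nef $\mathbb{Q}$-divisor $K_X + \Delta$ of Iitaka dimension zero on a $\mathbb{Q}$-factorial projective log surface is in fact $\mathbb{Q}$-linearly trivial. First I would pass to the minimal resolution $f : Y \to X$ with $K_Y + \Delta_Y = f^*(K_X+\Delta)$, so that $\Delta_Y$ is effective and it suffices to prove $K_Y + \Delta_Y \sim_{\mathbb{Q}} 0$ (since $\kappa$ and nefness are preserved, and the coefficients of the non-$f$-exceptional part of $\Delta_Y$ are still $\le 1$). By the non-vanishing theorem (Theorem \ref{thm-non}) and its proof, the case analysis of Theorem \ref{final} really comes down to controlling the geometry of $(Y, \Delta_Y)$ according to the Kodaira dimension of $Y$ itself.

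If $\kappa(Y, K_Y) \geq 0$, then $0 = \kappa(X, K_X+\Delta) = \kappa(Y, K_Y+\Delta_Y) \geq \kappa(Y, K_Y) \geq 0$ forces $\kappa(Y,K_Y) = 0$, so $Y$ is birational to an abelian surface, a hyperelliptic surface, a K3 surface, or an Enriques surface; pulling the canonical bundle back to such a minimal model and using $\kappa = 0$, one sees the effective divisor $\Delta_Y$ together with the exceptional locus is extremely constrained, and a direct argument (as in \cite[\S5]{fujita}) gives $K_Y + \Delta_Y \sim_{\mathbb{Q}} 0$. The substantive case is $\kappa(Y, K_Y) = -\infty$. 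When $Y$ is rational one uses a Riemann--Roch argument (as in the proof of Theorem \ref{thm-non}); when $Y$ is irrational ruled, $p : Y \to C$ is the Albanese fibration over a curve $C$ of genus $\geq 1$, and Lemma \ref{lem-fac} guarantees that every component of $\Delta_Y$ dominating $C$ is non-$f$-exceptional, hence has coefficient $\leq 1$. Here I would invoke Fujita's classification of the case $\kappa = 0$ from \cite[\S5]{fujita} together with Sakai's classification in \cite{sakai1}: the condition $\nu(Y, K_Y+\Delta_Y) = 0$ (which follows from nefness and $\kappa = 0$ via $0 \le \nu \le \cdots$, combined with the fact that a nef divisor with $\kappa = 0$ on a surface has $\nu = 0$ — this is essentially the abundance statement one is proving, so it needs care) pins down $(Y, \Delta_Y)$ up to finitely many configurations, in each of which $K_Y + \Delta_Y$ is visibly $\mathbb{Q}$-torsion in the Néron--Severi group, and then semi-ampleness of a numerically trivial nef divisor is automatic.

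The main obstacle I anticipate is precisely the step $\kappa(X, K_X+\Delta) = 0 \Rightarrow \nu(X, K_X+\Delta) = 0$, i.e.\ ruling out the possibility that $K_X+\Delta$ is nef with $(K_X+\Delta)^2 = 0$ but not numerically trivial (the ``$\nu = 1$, $\kappa = 0$'' scenario). On a surface this is where one must genuinely use the structure theory: after reducing to the ruled case one shows that the reduction map / Albanese map forces $(K_Y+\Delta_Y)$ to be numerically equivalent to a multiple of a fiber, and then $\kappa = 0$ together with $(K_Y+\Delta_Y)\cdot F = 0$ on fibers yields a contradiction unless $K_Y+\Delta_Y \equiv 0$. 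I would handle this by a careful analysis of $(K_Y + \Delta_Y)|_F$ on a general fiber $F \cong \mathbb{P}^1$ of $p$: since the coefficients are $\le 1$ and the restriction has degree $0$, adjunction on $F$ and semi-log-canonicity of the restricted boundary force the horizontal part of $\Delta_Y$ to be a section-plus-section or a bisection with prescribed coefficients, and Fujita's and Sakai's classifications then close the argument. Once numerical triviality is established, $K_X + \Delta \equiv 0$ nef implies $K_X+\Delta \sim_{\mathbb{Q}} 0$ because $\kappa \ge 0$ supplies an effective member, which must then be zero.
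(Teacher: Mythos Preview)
Your overall architecture is close to the paper's: pass to the minimal resolution, split according to $\kappa(Y,K_Y)$, and in the ruled case use Lemma~\ref{lem-fac} and Fujita's analysis. You also correctly identify the crux as excluding $\nu=1$ when $\kappa=0$. But there is a genuine gap in your treatment of the \emph{rational} case.

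You write that when $Y$ is rational ``one uses a Riemann--Roch argument (as in the proof of Theorem~\ref{thm-non})''. That argument only produces a section, i.e.\ $\kappa\geq 0$; it says nothing about $\nu=0$. In the paper this case is the heart of the proof and occupies Steps~2--4. After contracting all $(-1)$-curves $C$ with $(K+\Delta)\cdot C=0$ to reach a model $(S,\Delta_S)$, one takes $Z\in |m(K_S+\Delta_S)|$ and shows each connected component of $Z$ is indecomposable of canonical type. When $H^1(S,\mathcal O_S)=0$ and the nef dimension $n(S,K_S+\Delta_S)=1$, the reduction map gives $K_S+\Delta_S\sim_{\mathbb Q}g^*A$ with $A$ ample on a curve, contradicting $\kappa=0$. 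When $n=2$, Sakai's classification \cite[Propositions~4,~5]{sakai1} forces $S$ to be a degenerate del Pezzo surface with $Z\in|-nK_S|$ and $\Delta_S=\Delta_S^{>1}$; the contradiction then comes from proving that the normal bundle of the unique anticanonical curve $D$ is torsion. This last step (Step~4 in the paper) is delicate: one lifts $D$ back through the sequence $V\to S$ to a divisor $D_0$ supported in $\Delta_V^{>1}$, which is $f$-exceptional precisely because $\Delta$ is a boundary on the $\mathbb Q$-factorial $X$, and then uses $K_V+\Delta_V=f^*(K_X+\Delta)$ to conclude $\mathcal O_{D_0}(b(K_V+\Delta_V))\simeq\mathcal O_{D_0}$, hence $\mathcal O_D(aD)\simeq\mathcal O_D$.

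None of this degenerate del Pezzo analysis, the use of the nef dimension, or the crucial role of $\mathbb Q$-factoriality in Step~4 appears in your plan. Your final paragraph's strategy (analyze $(K_Y+\Delta_Y)|_F$ on a general fiber) is appropriate for the irrational ruled case but does not touch the rational situation, where there is no Albanese fibration. Without a replacement for Steps~3--4, your proposal does not close the $\nu=1$, $\kappa=0$ gap in the rational case.
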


\begin{proof}
Let $f:V\to X$ be the minimal resolution. 
We put $K_V+\Delta_V=f^*(K_X+\Delta)$. 
We note that $\Delta_V$ is effective. 
It is sufficient to see that $K_V+\Delta_V\sim _{\mathbb Q}0$. 
Let 
$$
\varphi:V=:V_0\overset{\varphi_0}\to V_1\overset{\varphi_1}\to \cdots 
\overset{\varphi_{k-1}}\to V_k=:S 
$$ 
be a sequence of blow-downs such that 
\begin{itemize}
\item[(1)] $\varphi_i$ is a blow-down of a $(-1)$-curve $C_i$ on $V_i$, 
\item[(2)] $\Delta_{V_{i+1}}=\varphi_{i*}\Delta_{V_i}$, and 
\item[(3)] $(K_{V_i}+\Delta_{V_i})\cdot C_i=0$, 
\end{itemize}
for every $i$. 
We may assume that there are no $(-1)$-curves $C$ on $S$ with 
$(K_S+\Delta_S)\cdot C=0$. We note that 
$K_V+\Delta_V=\varphi^*(K_S+\Delta_S)$. 
It is 
sufficient to 
see that $K_S+\Delta_S\sim _{\mathbb Q}0$. 
By assumption, there is a member $Z$ of 
$|m(K_S+\Delta_S)|$ for some divisible positive integer $m$. 
Then, for every positive integer $t$, 
$tZ$ is the unique member of $|tm(K_S+\Delta_S)|$. 
We can easily check the following lemma. 
See, for example, \cite[(5.4)]{fujita}. 
 
\begin{lem}[{cf.~\cite[(5.5) Lemma]{fujita}}]\label{633}  
Let $Z=\sum _i \xi_i Z_i$ be the 
prime decomposition of $Z$. Then 
$K_S\cdot Z_i=\Delta_S \cdot Z_i=Z\cdot Z_i=0$ for every $i$. 
\end{lem} 
We will derive a contradiction assuming $Z\ne 0$, 
equivalently, $\nu(S, K_S+\Delta_S)=1$. 
We can decompose $Z$ into the connected components as follows: 
$$
Z=\sum _{i=1}^{r} \mu_i Y_i, 
$$ 
where $\mu_i Y_i$ is a connected component of 
$Z$ such that $\mu_i$ is the greatest common divisor of 
the coefficients of prime components of $Y_i$ in $Z$ for every 
$i$, 
and $\mu_iY_i\ne \mu_j Y_j$ for $i\ne j$. 
Then we obtain $\omega _{Y_i}\simeq \mathcal O_{Y_i}$ for 
every $i$. 
It is because $Y_i$ is {\em{indecomposable of 
canonical type}} in the sense of Mumford by Lemma \ref{633} 
 (see, for example, \cite[(5.6)]{fujita}). 
\setcounter{say}{0}
\begin{say}[{cf.~\cite[(5.7)]{fujita}}]\label{ste1} 
We assume that $\kappa (S, K_S)\geq 0$. 
Since $0\leq \kappa (S, K_S)\leq \kappa (S, K_S+\Delta_S)=0$, we 
obtain $\kappa (S, K_S)=0$. If 
$S$ is not minimal, then we can find a 
$(-1)$-curve 
$E$ on $S$ such that $E\cdot (K_S+\Delta_S)=0$. 
Therefore, $S$ is minimal by the construction of $(S, \Delta_S)$. 
We show $\kappa (S, K_S+\Delta_S)=\kappa (S, Z)\geq 1$ 
in order to get a contradiction. 
By taking an {}\'etale cover, we may assume that 
$S$ is an Abelian surface or a $K3$ surface. 
In this case, it is easy to see that 
$\kappa (S, K_S+\Delta_S)=\kappa (S, Z)\geq 1$ since 
$Z\ne 0$. 
\end{say}
From now on, we assume that 
$\kappa (S, K_S)=-\infty$. 

\begin{say}
We further assume that $H^1(S, \mathcal O_S)=0$. 
If $n(S, K_S+\Delta_S)=1$, then 
there exist a surjective morphism $g:S\to T$ onto a smooth projective 
curve $T$ and a nef $\mathbb Q$-divisor $A\not\equiv 0$ on $T$ such that 
$K_S+\Delta_S\equiv g^*A$ (cf.~\cite[Proposition 2.11]{8a}). 
Here, $g$ is the reduction map associated to $K_S+\Delta_S$. 
Since $H^1(S, \mathcal O_S)=0$, we 
obtain $K_S+\Delta_S\sim _{\mathbb Q}g^*A$. 
Therefore, $\kappa (S, K_S+\Delta_S)=1$ because $A$ is an ample 
$\mathbb Q$-divisor on $T$. 
It is a contradiction. 
\end{say}

\begin{say}
Under the assumption that 
$H^1(S, \mathcal O_S)=0$, 
we further assume that $n(S, K_S+\Delta_S)=2$. 
By \cite[Proposition 4]{sakai1}, we know $r=1$, that is, 
$Z=\mu_1Y_1$. 
In this case, $S$ is a degenerate del Pezzo surface, 
that is, nine times blow-ups of 
$\mathbb P^2$, and $Z\in |-nK_S|$ for some 
positive integer $n$ (cf.~\cite[Proposition 5]{sakai1}). 
Since $\kappa (S, -K_S)=0$ and 
$m(K_S+\Delta _S)\sim Z\sim -nK_S$, we obtain $m\Delta_S=(m+n)D$, 
where $D$ is the unique member of $|-K_S|$. 
Thus, 
\begin{align*}
\Delta_S=\frac{m+n}{m}D \quad {\text{and}}\quad  
Z=nD. 
\end{align*} 
In particular, we obtain $\Delta_S=\Delta^{>1}_S$. 
We will see that 
$\mathcal O_D(aD)\simeq \mathcal O_D$ for some positive 
integer $a$ in Step \ref{s4}. 
This implies that 
the normal bundle $\mathcal N_D=\mathcal O_D(D)$ is 
a torsion. 
It is a contradiction by \cite[Proposition 5]{sakai1}. 
\end{say}

\begin{say}\label{s4}
In this step, we will prove that 
$\mathcal O_D(aD)\simeq \mathcal O_D$ for 
some positive integer $a$. 
We put $D_k=D$ and construct $D_i$ inductively. 
It is easy to see that 
$\varphi_i:V_i\to V_{i+1}$ is the blow-up at $P_{i+1}$ with 
$\mult _{P_{i+1}}\Delta_{V_{i+1}}\geq 1$ 
for every $i$ by calculating discrepancy 
coefficients since $\Delta_{V_i}$ is effective. 
If $\mult _{P_{i+1}}D_{i+1}=0$, 
then we put $D_i=\varphi^*_{i+1}D_{i+1}$. 
If $\mult _{P_{i+1}}D_{i+1}>0$, 
then we put $D_i=\varphi^*_{i+1}D_{i+1}-C_i$, where 
$C_i$ is the exceptional curve of $\varphi_i$. 
We note that $\mult _P\Delta_{V_{i+1}}>\mult _PD_{i+1}$ for 
every $P\in V_{i+1}$ and $\mult _PD_{i+1}\in \mathbb Z$. 
Finally, we obtain $D_0$ on $V_0=V$. 
We can see that $D_0$ is effective and 
$\Supp D_0\subset \Supp \Delta^{>1}_V$ 
by the above construction. 
We note that 
$\varphi_{i*}\mathcal O_{D_i}\simeq \mathcal O_{D_{i+1}}$ 
for every $i$. 
It is because $\varphi_{i*}\mathcal O_{V_i}(-D_i)\simeq 
\mathcal O_{V_{i+1}}(-D_{i+1})$ and 
$R^1\varphi_{i*}\mathcal O_{V_{i}}(-D_{i})=0$ for every $i$. 
See the following commutative diagram. 
$$
\xymatrix{
0\ar[r] & \mathcal O_{V_{i+1}}(-D_{i+1})
\ar[d]^{\simeq }\ar[r]&\mathcal O_{V_{i+1}}
\ar[d]^{\simeq }\ar[r]&\mathcal O_{D_{i+1}} \ar[d]
\ar[r] &0\\
0\ar[r] & \varphi_{i*}\mathcal O_{V_i}(-D_i)
\ar[r]&\varphi_{i*}\mathcal O_{V_{i}}\ar[r]& 
\varphi_{i*}\mathcal O_{D_{i}}\ar[r] & 
R^1\varphi_{i*}\mathcal O_{V_i}(-D_i)=0 
}
$$ 
Therefore, we obtain $\varphi_*\mathcal O_{D_0}\simeq \mathcal O_D$. 
Since $\Supp D_0\subset \Supp \Delta^{>1}_V$, we see that 
$D_0$ is $f$-exceptional. 
Since $K_V+\Delta_V=f^*(K_X+\Delta)$, 
we obtain 
$\mathcal O_{D_0}(b(K_V+\Delta_V))\simeq 
\mathcal O_{D_0}$ for 
some positive divisible integer $b$. 
Thus, $$\mathcal O_D(b(K_S+\Delta_S))
\simeq\varphi_*\mathcal O_{D_0}
(b(K_V+\Delta_V))\simeq \mathcal O_D. $$ 
In particular, $\mathcal O_D(aD)\simeq \mathcal O_D$ for 
some positive integer $a$. 
It is because 
$$
b(K_S+\Delta_S)\sim \frac{bn}{m}D. 
$$
\end{say}

\begin{say}\label{ste5}
Finally, we assume that 
$S$ is an irrational ruled surface. 
Let $\alpha :S\to B$ be the Albanese fibration. 
In this case, we can easily check that 
every irreducible component of 
$\Supp \Delta^{>1}_S$ is vertical with respect to 
$\alpha$ (cf.~Lemma \ref{lem-fac}). 
Therefore, \cite[(5.9)]{fujita} works without any changes. 
Thus, we get a contradiction. 
\end{say}
We finish the proof of Theorem \ref{final}. 
\end{proof}

\begin{rem}\label{rem-ta} 
In \cite{tanaka}, Hiromu Tanaka slightly 
simplifies the proof of Theorem \ref{final}. 
His proof, which does not use the reduction map (cf.~\ref{2525}), 
works over any algebraically closed field $k$ with $k\ne \overline {\mathbb F}_p$. 
\end{rem}

\begin{rem}\label{rem65} 
Our proof of Theorem \ref{final} 
works over any algebraically closed field $k$ of characteristic zero if we 
use Theorem \ref{thm53} in Step \ref{ste5}. 
From Step \ref{ste1} to Step \ref{s4}, 
we can use the Lefschetz principle because we do not need the $\mathbb Q$-factoriality 
of $X$ there. 
\end{rem}

We close this section with the following corollary. 

\begin{cor}[Abundance theorem for log canonical surfaces] 
Let $(X, \Delta)$ be a complete log canonical surface such that 
$\Delta$ is a $\mathbb Q$-divisor. 
Assume that $K_X+\Delta$ is nef. 
Then $K_X+\Delta$ is semi-ample. 
\end{cor}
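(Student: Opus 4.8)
The plan is to deduce the statement from Theorem \ref{aban} by passing to the minimal resolution and then descending semi-ampleness, which simultaneously disposes of the fact that $X$ is neither assumed $\mathbb Q$-factorial nor assumed projective. Let $f\colon Y\to X$ be the minimal resolution and write $K_Y+\Delta_Y=f^*(K_X+\Delta)$. First I would check that $(Y,\Delta_Y)$ is a pair to which Theorem \ref{aban} applies. The divisor $\Delta_Y$ is effective: this is the same observation used in the proof of Theorem \ref{final}, and it reflects that the exceptional discrepancies of the minimal resolution satisfy $a(E_i,X,\Delta)\leq a(E_i,X,0)\leq 0$ (the first inequality because $f^*\Delta-f^{-1}_*\Delta$ is effective, the second by minimality). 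Moreover every coefficient of $\Delta_Y$ is at most one: the strict transform part $f^{-1}_*\Delta$ inherits the coefficients of $\Delta$, and each $f$-exceptional coefficient equals $-a(E_i,X,\Delta)\leq 1$ by log canonicity (and the two parts share no prime component). Thus $\Delta_Y$ is a boundary $\mathbb Q$-divisor. Finally $Y$ is smooth, hence $\mathbb Q$-factorial, hence projective by Lemma \ref{222} --- crucially this needs nothing about $X$ --- and $K_Y+\Delta_Y=f^*(K_X+\Delta)$ is nef because $K_X+\Delta$ is. So Theorem \ref{aban} gives that $K_Y+\Delta_Y$ is semi-ample.

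Next I would descend semi-ampleness along $f$. Pick $m>0$ sufficiently divisible so that $m(K_X+\Delta)$ is Cartier (using that $K_X+\Delta$ is $\mathbb Q$-Cartier for an lc pair with $\mathbb Q$-boundary) and that $m(K_Y+\Delta_Y)$ is base point free. Then $\mathcal O_Y(m(K_Y+\Delta_Y))\simeq f^*\mathcal O_X(m(K_X+\Delta))$, so the projection formula together with $f_*\mathcal O_Y\simeq \mathcal O_X$ yields $H^0(Y,\mathcal O_Y(m(K_Y+\Delta_Y)))\simeq H^0(X,\mathcal O_X(m(K_X+\Delta)))$ via $s\mapsto f^*s$. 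Since $f^*s$ vanishes at $y\in Y$ exactly when $s$ vanishes at $f(y)$, we get $\Bs|m(K_Y+\Delta_Y)|=f^{-1}(\Bs|m(K_X+\Delta)|)$; the left side is empty and $f$ is surjective, hence $\Bs|m(K_X+\Delta)|=\emptyset$. Therefore $K_X+\Delta$ is semi-ample, as desired.

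There is no serious obstacle once Theorem \ref{aban} is available; the only points needing care are bookkeeping ones, namely verifying that $\Delta_Y$ is genuinely an effective boundary $\mathbb Q$-divisor so that $(Y,\Delta_Y)$ really is a $\mathbb Q$-factorial projective log surface of the type covered by Theorem \ref{aban}, and observing that the projectivity of $Y$ is automatic from Lemma \ref{222}. (As a byproduct, the descent shows that $X$ is itself projective when $K_X+\Delta$ is big, since then $|m(K_X+\Delta)|$ is very ample for $m\gg 0$, though this is not needed here.)
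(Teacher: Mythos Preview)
Your proof is correct and follows exactly the paper's approach: pass to the minimal resolution $f\colon V\to X$, observe that $\Delta_V$ is a boundary $\mathbb Q$-divisor (effectivity from minimality, coefficients $\leq 1$ from log canonicity), note $V$ is smooth hence projective, apply Theorem \ref{aban}, and push semi-ampleness back down to $X$. Your write-up simply spells out in more detail the steps the paper leaves implicit; the one inaccuracy is the parenthetical claim that big and semi-ample forces $|m(K_X+\Delta)|$ to be very ample for $m\gg 0$ (it need only define a birational contraction), but as you note this plays no role in the argument.
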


\begin{proof}
Let $f:V\to X$ be the minimal resolution. 
We put $K_V+\Delta_V=f^*(K_X+\Delta)$. 
Since $(X, \Delta)$ is log canonical, 
$\Delta_V$ is a boundary $\mathbb Q$-divisor. 
Since $V$ is smooth, 
$V$ is automatically projective. 
Apply Theorem \ref{aban} to the pair $(V, \Delta_V)$. 
We obtain $K_V+\Delta_V$ is semi-ample. 
It implies that 
$K_X+\Delta$ is semi-ample. 
\end{proof}

\section{Relative setting}\label{sec7} 

In this section, we discuss the finite generation of log canonical 
rings and the 
log abundance theorem in the relative setting. 

\begin{thm}[Relative finite generation]\label{rel1} 
Let $(X, \Delta)$ be a log surface 
such that $\Delta$ is a $\mathbb Q$-divisor. 
Let $\pi:X\to S$ be a proper surjective morphism onto a variety $S$. 
Assume that $X$ is $\mathbb Q$-factorial or that $(X, \Delta)$ is log canonical. 
Then 
$$
R(X/S, \Delta)=\bigoplus _{m\geq 0} \pi_*\mathcal O_X(\llcorner 
m(K_X+\Delta)\lrcorner)
$$ 
is a finitely generated $\mathcal O_S$-algebra. 
\end{thm}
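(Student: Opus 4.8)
The plan is to reduce the relative statement to the absolute finite generation theorem (Theorem \ref{43}) together with the relative log minimal model program (Theorem \ref{thm32}), using a standard localization/strata argument on the base $S$. First I would note that the question is local on $S$, so I may assume $S$ is affine. By Theorem \ref{thm32} we can run the log minimal model program over $S$ with respect to $K_X+\Delta$ and reach either a minimal model $(X^*,\Delta^*)$ with $K_{X^*}+\Delta^*$ nef over $S$, or a Mori fiber space $g:(X^*,\Delta^*)\to C$ over $S$. Since the $R(X/S,\Delta)$-algebra is unchanged under the birational contractions of the program (each $\varphi_i$ contracts a $(K+\Delta)$-negative ray, so $\varphi_{i*}\mathcal O_{X_i}(\llcorner m(K_{X_i}+\Delta_i)\lrcorner)=\mathcal O_{X_{i+1}}(\llcorner m(K_{X_{i+1}}+\Delta_{i+1})\lrcorner)$ for $m$ divisible, by the negativity lemma), it suffices to prove finite generation for the output. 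In the Mori fiber space case $-(K_{X^*}+\Delta^*)$ is relatively ample, so $H^0$ of positive multiples vanishes over the generic point of each component of $C$ and $R(X^*/S,\Delta^*)$ is trivially finitely generated; so the substance is the minimal model case.

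So from now on I would assume $K_X+\Delta$ is nef over $S$ with $X$ $\mathbb Q$-factorial (the lc case being handled by passing to the minimal resolution, which is automatically projective, exactly as in the Corollary after Theorem \ref{final}). Let $\eta\in S$ be the generic point; the generic fiber is a $\mathbb Q$-factorial log surface (or curve, or point) over the function field of $S$, and by the absolute Theorem \ref{43} (applied after a Lefschetz-principle base change to a subfield finitely generated over $\mathbb C$, or directly by spreading out) the generic fiber log canonical ring is finitely generated. The idea is then to spread this out: choose a nonempty open $S^\circ\subset S$ over which finite generation holds fiberwise and the formation of $\pi_*\mathcal O_X(\llcorner m(K_X+\Delta)\lrcorner)$ commutes with base change, which is possible because the relevant higher direct images vanish by the Kodaira-type vanishing theorem \ref{vani} (this is where one uses that $\Delta$ is a boundary and the non-lc locus is finite). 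Then by noetherian induction on $\dim S$, applied to the complement $S\setminus S^\circ$ with its reduced structure (and its preimage in $X$), one reduces to strictly smaller base dimension.

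The technical heart — and the step I expect to be the main obstacle — is gluing the finitely generated algebras over the stratification of $S$ into a single finitely generated $\mathcal O_S$-algebra, rather than merely knowing finite generation stratum by stratum. The clean way around this is to argue semi-ampleness relatively: show that over the minimal model, either $K_{X^*}+\Delta^*$ is relatively semi-ample (which gives $R(X^*/S,\Delta^*)$ finitely generated as the relative section ring of a relatively semi-ample divisor), or it has relative numerical dimension $\le 1$ over each point and one invokes the relative version of Fujita's $\kappa=\nu\le 1$ semi-ampleness argument (\cite[(4.1) Theorem]{fujita}). Concretely I would prove a relative analogue of Theorem \ref{41}: if $(X,\Delta)$ is $\mathbb Q$-factorial, $\pi:X\to S$ is proper, and $K_X+\Delta$ is nef and $\pi$-big, then $K_X+\Delta$ is $\pi$-semi-ample — the base-point-freeness steps (Steps \ref{t1}–\ref{t7} in the proof of Theorem \ref{41}) are local on $S$ and go through verbatim once the Kodaira-type vanishing \ref{vani} is read in the relative form already available in \cite{fujino2}. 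Combining this relative semi-ampleness with the relative MMP of Theorem \ref{thm32} and the non-vanishing/abundance inputs then yields that $R(X/S,\Delta)$ is a finitely generated $\mathcal O_S$-algebra, completing the proof.
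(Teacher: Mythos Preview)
Your overall destination is right --- after running the MMP over $S$ you want $K_X+\Delta$ to be $\pi$-semi-ample --- but the route you propose is both more laborious than necessary and not actually carried through. The stratification/spreading-out argument has exactly the gluing problem you yourself flag, and there is no cheap fix: fiberwise finite generation does not assemble into finite generation of the sheaf of algebras. Your fallback, proving relative analogues of Theorem~\ref{41} and the rest of the abundance machinery, might ultimately succeed for the $\pi$-big case, but you leave the non-big case as a vague appeal to a ``relative Fujita $\kappa=\nu\le 1$ argument,'' which is not in the paper and is not automatic. Also, applying Theorem~\ref{43} to the generic fiber via the Lefschetz principle is delicate precisely because $\mathbb Q$-factoriality is not stable under field extension (the paper warns about this in Remark~\ref{rem65}).

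The paper sidesteps all of this with a single device. After shrinking so $S$ is affine, it \emph{compactifies} so that $S$ is projective; then $X$ is automatically projective by Lemma~\ref{222}. Take $H$ very ample on $S$ and $G$ a general member of $|4H|$, and run the (absolute) MMP for $K_X+\Delta+\pi^*G$. By the length bound $-(K_X+\Delta)\cdot C\le 3$ on extremal rational curves (Proposition~\ref{2727}), any $(K_X+\Delta+\pi^*G)$-negative extremal ray is spanned by a curve $C$ with $\pi^*G\cdot C=0$, hence $\pi(C)$ is a point; so this absolute MMP is in fact an MMP over $S$. The output has $K_X+\Delta+\pi^*G$ nef, and the \emph{absolute} abundance theorem (Theorem~\ref{aban}) makes it semi-ample. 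Subtracting $\pi^*G$, one gets that $K_X+\Delta$ is $\pi$-semi-ample, and the relative log canonical ring is finitely generated. No separate relative semi-ampleness theorem, no stratification, no Lefschetz principle --- the whole relative statement is reduced in one stroke to the absolute abundance theorem already proved.
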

\begin{proof}(cf.~Proof of Theorem 1.1 in \cite{fujino-finite}). 
When $(X, \Delta)$ is log canonical, 
we replace $X$ with its minimal resolution. 
So, we may always assume that $X$ is $\mathbb Q$-factorial. 
If $\kappa (X_\eta, K_{X_\eta}+\Delta_\eta)=-\infty$, where 
$\eta$ is the generic point of $S$, 
$X_\eta$ is the generic fiber of $\pi$, and 
$\Delta_\eta=\Delta|_{X_\eta}$, 
then the statement is trivial. 
So, we assume that 
$\kappa (X_\eta, K_{X_\eta}+\Delta_\eta)\geq 0$. 
We further assume that $S$ is affine by shrinking $\pi:X\to S$. 
By compactifying $\pi:X\to S$, we may assume that 
$S$ is projective. 
Since $X$ is $\mathbb Q$-factorial, $X$ is automatically 
projective (cf.~Lemma \ref{222}). 
In particular, $\pi$ is projective. 
Let $H$ be a very ample divisor on $S$ and $G$ a general member of 
$|4H|$. We run the log minimal model program 
for $(X, \Delta +\pi^*G)$. 
By Proposition \ref{2727}, 
this log minimal model program is a log minimal model 
program over $S$. 
It is because any $(K_X+\Delta+\pi^*G)$-negative 
extremal ray of $\overline {NE}(X)$ is a 
$(K_X+\Delta)$-negative extremal ray of $\overline {NE}(X/S)$. 
When we prove this theorem, by Theorem \ref{thm32}, 
we may assume that 
$K_X+\Delta+\pi^*G$ is nef over $S$, equivalently, 
$K_X+\Delta+\pi^*G$ is nef. 
By Theorem \ref{aban}, $K_X+\Delta+\pi^*G$ is semi-ample. 
In particular, $K_X+\Delta$ is $\pi$-semi-ample. 
Thus, $$R(X/S, \Delta)=\bigoplus _{m\geq 0}
\pi_*\mathcal O_X(\llcorner 
m(K_X+\Delta)\lrcorner)$$ 
is a finitely generated $\mathcal O_S$-algebra.  
\end{proof}
 
\begin{thm}[Relative abundance theorem]\label{rel2} 
Let $(X, \Delta)$ be a 
log surface such that $\Delta$ is a $\mathbb Q$-divisor. 
Let $\pi:X\to S$ be a proper 
surjective morphism onto a variety $S$. 
Assume that $X$ is $\mathbb Q$-factorial or that 
$(X, \Delta)$ is log canonical. 
We further assume that $K_X+\Delta$ is $\pi$-nef. 
Then $K_X+\Delta$ is $\pi$-semi-ample. 
\end{thm}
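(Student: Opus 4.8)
The plan is to deduce the relative statement from the absolute abundance theorem (Theorem \ref{aban}) by a compactification argument, in the spirit of the proof of Theorem \ref{rel1}. The conclusion is local on $S$, so I would first shrink $S$ and assume it is affine; if $\dim S=0$ the statement reduces to Theorem \ref{aban} (and its corollary for log canonical surfaces), so I assume $\dim S\geq 1$. When $(X,\Delta)$ is log canonical I would replace $X$ by its minimal resolution $f:V\to X$ and $\Delta$ by $\Delta_V$ with $K_V+\Delta_V=f^*(K_X+\Delta)$; then $\Delta_V$ is a boundary $\mathbb Q$-divisor, $V$ is $\mathbb Q$-factorial, $K_V+\Delta_V$ is $(\pi\circ f)$-nef, and $(\pi\circ f)$-semi-ampleness of $K_V+\Delta_V$ gives $\pi$-semi-ampleness of $K_X+\Delta$. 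So from now on $X$ is $\mathbb Q$-factorial. Then I would choose a compactification $\psi:\bar X\to\bar S$ of $\pi:X\to S$ with $\bar S$ projective, $\bar X$ a projective $\mathbb Q$-factorial surface, and $\psi^{-1}(S)=X$ (such a compactification exists by a routine construction for surfaces; cf.~the proof of Lemma \ref{222}), and let $\bar\Delta$ be the closure of $\Delta$, which is again a boundary $\mathbb Q$-divisor.

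The second step makes $K_{\bar X}+\bar\Delta$ nef over $\bar S$ without changing the situation over $S$. I would run the $(K_{\bar X}+\bar\Delta)$-minimal model program over $\bar S$, which is permitted by Theorem \ref{thm32}~(A). Since $K_X+\Delta$ is $\pi$-nef, an easy induction shows that no step of this program can contract a curve lying over a point of $S$ (such a curve would lie in $X$ and be $(K_X+\Delta)$-negative), so every step is an isomorphism over $S$; in particular the output $\psi:(W,\Delta_W)\to\bar S$ is still isomorphic to $(X,\Delta)$ over $S$ and satisfies $\psi^{-1}(S)=X$. The program cannot terminate with a Mori fiber space $g:W\to C$ over $\bar S$: the induced map $C\to\bar S$ is dominant, so $\dim C\geq\dim\bar S=\dim S\geq 1$; if $\dim S=2$ this contradicts $\dim C<2$, while if $\dim S=1$ then $\dim C=1$ and, for $c\in C$ lying over a point of $S$, the fiber $g^{-1}(c)$ contains a curve $F\subseteq\psi^{-1}(S)=X$ with $(K_X+\Delta)\cdot F<0$, contradicting $\pi$-nefness. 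Hence the program ends with $K_W+\Delta_W$ nef over $\bar S$.

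Now I would import the device from the proof of Theorem \ref{rel1}. Let $H$ be a very ample divisor on $\bar S$ and $G\in|4H|$ a general member; then $\psi^*G$ is a reduced effective divisor with no component in common with $\Delta_W$, so $\Delta_W+\psi^*G$ is a boundary $\mathbb Q$-divisor. As in the proof of Theorem \ref{rel1}, any $(K_W+\Delta_W+\psi^*G)$-negative extremal ray of $\overline {NE}(W)$ is a $(K_W+\Delta_W)$-negative extremal ray of $\overline {NE}(W/\bar S)$, hence is generated by a $\psi$-vertical curve $C$; but then $\psi^*G\cdot C=0$ and $(K_W+\Delta_W)\cdot C\geq 0$, so no such ray exists and $K_W+\Delta_W+\psi^*G$ is nef. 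Applying Theorem \ref{aban} to the $\mathbb Q$-factorial projective log surface $(W,\Delta_W+\psi^*G)$, the $\mathbb Q$-divisor $K_W+\Delta_W+\psi^*G\sim_{\mathbb Q}(K_W+\Delta_W)+\psi^*(4H)$ is semi-ample, hence $\psi$-semi-ample; since $\psi^*(4H)$ is pulled back from $\bar S$, the projection formula shows that $K_W+\Delta_W$ is itself $\psi$-semi-ample. Restricting over $S$ gives that $K_X+\Delta$ is $\pi$-semi-ample.

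The step I expect to be the main obstacle is the second one: checking that the minimal model program over $\bar S$ is an isomorphism over $S$ and cannot end in a Mori fiber space. This is precisely where the hypothesis that $K_X+\Delta$ is already $\pi$-nef is used, and it requires some care in the choice of compactification (the condition $\psi^{-1}(S)=X$). The other ingredients --- the existence of a $\mathbb Q$-factorial projective compactification of a surface, and the invariance of relative semi-ampleness under adding the pullback of a divisor from the base --- are routine.
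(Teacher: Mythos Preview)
Your proof is correct but proceeds differently from the paper's. The paper's argument for Theorem~\ref{rel2} is very short: it quotes Theorem~\ref{rel1} to obtain that $R(X/S,\Delta)$ is a finitely generated $\mathcal O_S$-algebra, observes that on the generic fiber $K_{X_\eta}+\Delta_\eta$ is nef and abundant (trivial since $\dim X_\eta\le 1$), and then appeals to an external lemma \cite[Lemma~3.12]{fujino-finite} which converts ``finitely generated relative canonical ring plus abundance on the generic fiber'' into $\pi$-semi-ampleness.

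Your approach instead extends the \emph{method} of the proof of Theorem~\ref{rel1} rather than its statement: you compactify, run a $(K_{\bar X}+\bar\Delta)$-MMP over $\bar S$ (carefully checking via the $\pi$-nefness hypothesis that every step is an isomorphism over $S$ and that no Mori fiber space occurs), and then use the $4H$ device together with Proposition~\ref{2727} to reduce to the absolute abundance theorem. This is a genuine alternative: it is self-contained within the paper and avoids the external reference to \cite{fujino-finite}, at the cost of the additional MMP step over $\bar S$ and the verification that this MMP cannot terminate in a Mori fiber space. The paper's route is more modular (finite generation and semi-ampleness are decoupled), while yours is more geometrically direct. One small remark: in your Step~3 the sentence ``is a $(K_W+\Delta_W)$-negative extremal ray of $\overline{NE}(W/\bar S)$ \ldots\ but then $(K_W+\Delta_W)\cdot C\ge 0$'' is a contradiction-by-design, and the underlying computation really uses Proposition~\ref{2727} applied to the pair $(W,\Delta_W)$ (so that the bound $-(K_W+\Delta_W)\cdot C\le 3$ is beaten by $\psi^*G\cdot C\ge 4$ on non-vertical curves) --- you might make that explicit.
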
 
\begin{proof} 
As in the proof of Theorem \ref{rel1}, 
we may always assume that $X$ is $\mathbb Q$-factorial. 
By Theorem \ref{aban}, 
we may assume that $\dim S\geq 1$. 
By Theorem \ref{rel1}, 
we have that 
$$R(X/S, \Delta)=\bigoplus _{m\geq 0}\pi_*\mathcal O_X(\llcorner 
m(K_X+\Delta)\lrcorner)$$ 
is a finitely generated $\mathcal O_S$-algebra. 
It is easy to see that 
$K_{X_\eta}+\Delta_\eta$ is nef and abundant. 
Therefore, $K_X+\Delta$ is $\pi$-semi-ample 
(see, for example, \cite[Lemma 3.12]{fujino-finite}). 
\end{proof}

We recommend the reader to see 
\cite[3.1.~Appendix]{fujino-finite} for related 
topics. 
Here, we give an easy application. 

\begin{thm}\label{atara}  
Let $X$ be a normal algebraic 
variety with only rational singularities and 
let $\pi:X\to S$ be a projective 
morphism onto a variety $S$. 
Assume that $K_X$ is $\pi$-big. 
Then the relative {\em{canonical model}} 
$$
Y=\Proj _{S} \bigoplus _{m\geq 0} \pi_*\mathcal O_X(mK_X)
$$ 
of $X$ over $S$ has only rational singularities. 
\end{thm}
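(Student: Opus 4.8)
The plan is to run the minimal model program for $K_X$, pass to the relative canonical model by the abundance theorem, and then recognize the contraction onto the canonical model as an instance of Proposition \ref{2626}. First I would make the standard reductions as in the proof of Theorem \ref{rel1}: since $X$ has only rational singularities it is $\mathbb Q$-factorial, and after shrinking the base to an affine scheme and then compactifying we may assume that $S$ is projective and $\pi$ is projective, so that $X$ and the canonical model $Y$ are projective surfaces. I would then run the log minimal model program for $K_X$ over $S$ (Theorem \ref{thm32}~(A)); since $K_X$ is $\pi$-big, it is $\pi$-pseudo-effective, so the program terminates at a birational morphism $\varphi\colon X\to X^{*}$ over $S$ with $X^{*}$ $\mathbb Q$-factorial and $K_{X^{*}}$ $\pi$-nef. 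Each step of the program is the contraction of a $(K)$-negative extremal ray, so Proposition \ref{2626} applies at every step and shows that $X^{*}$ again has only rational singularities; moreover $K_{X^{*}}$ is still $\pi$-big, since the canonical ring is unchanged along the program.

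Next I would produce the morphism onto the canonical model. By the relative abundance theorem (Theorem \ref{rel2}), $K_{X^{*}}$ is $\pi$-semi-ample, so for a divisible $m$ the relative linear system $|mK_{X^{*}}/S|$ defines a morphism $g\colon X^{*}\to Y$ over $S$ with $mK_{X^{*}}\sim g^{*}A$ for some $\pi$-ample divisor $A$ on $Y$. Since $K_{X^{*}}$ is $\pi$-big, $g$ has connected fibres and is generically finite, hence birational, $Y$ is a normal surface (because $g_{*}\mathcal O_{X^{*}}=\mathcal O_{Y}$ and $X^{*}$ is normal), and $Y=\Proj_{S}\bigoplus_{m\ge 0}\pi_{*}\mathcal O_{X^{*}}(mK_{X^{*}})=\Proj_{S}\bigoplus_{m\ge 0}\pi_{*}\mathcal O_{X}(mK_{X})$ is exactly the relative canonical model of $X$ over $S$. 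In particular $K_{X^{*}}$ is numerically $g$-trivial.

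Finally I would apply Proposition \ref{2626} to $g$ after a harmless perturbation, and this last step is the only one that needs care. Write $\Exc(g)=\bigcup_i C_i$. The intersection matrix $(C_i\cdot C_j)$ is negative definite, its off-diagonal entries are non-negative, so $-(C_i\cdot C_j)$ is positive definite with non-positive off-diagonal entries, whence its inverse has non-negative entries; thus there is an effective $\mathbb Q$-divisor $B$ with $\Supp B\subseteq \Exc(g)$ and $B\cdot C_j<0$ for every $j$. Choosing a small rational $\varepsilon>0$, the $\mathbb Q$-divisor $\Delta^{*}:=\varepsilon B$ has all coefficients strictly less than $1$; it is $\mathbb R$-Cartier because $X^{*}$ is $\mathbb Q$-factorial, so $(X^{*},\Delta^{*})$ is a log surface, and since $K_{X^{*}}$ is $g$-trivial we get $-(K_{X^{*}}+\Delta^{*})\cdot C_j=-\varepsilon\, B\cdot C_j>0$ for every $g$-exceptional curve $C_j$, i.e.\ $-(K_{X^{*}}+\Delta^{*})$ is $g$-ample. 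Now Proposition \ref{2626}, applied to the log surface $(X^{*},\Delta^{*})$ and $g\colon X^{*}\to Y$, gives $R^{i}g_{*}\mathcal O_{X^{*}}=0$ for $i>0$ and, because $X^{*}$ has only rational singularities, so does $Y$. The delicate point, as indicated, is arranging the auxiliary boundary $\Delta^{*}$ to be effective with coefficients below $1$ so that the hypotheses of Proposition \ref{2626} are literally met; the rest — termination of the surface program, invariance of the canonical ring, and the fact that $g$ is birational onto the canonical model — is routine.
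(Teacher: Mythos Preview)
Your argument is correct. The reduction via the MMP and the passage to the canonical model via relative abundance are exactly as in the paper (your compactification step is unnecessary, since Theorem~\ref{thm32} is already stated for a projective morphism $\pi:X\to S$, but it does no harm). The genuine divergence is in the final step, where you must show $R^ig_*\mathcal O_{X^*}=0$ for $i>0$.

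The paper does this with multiplier ideals: writing $K_V+\Delta_V=f^*K_{X^*}$ for a log resolution $f:V\to X^*$, one has $\mathcal J(X^*,0)=f_*\mathcal O_V(-\llcorner\Delta_V\lrcorner)$, and since $-\llcorner\Delta_V\lrcorner-(K_V+\{\Delta_V\})\sim_{\mathbb Q}-(g\circ f)^*K_Y$ is $(g\circ f)$-nef and $(g\circ f)$-big (the map being birational), relative Kawamata--Viehweg--Nadel gives $R^ig_*\mathcal J(X^*,0)=0$; the cokernel $\mathcal O_{X^*}/\mathcal J(X^*,0)$ is supported in dimension zero, so $R^ig_*\mathcal O_{X^*}=0$ follows. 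Your route instead manufactures a boundary $\Delta^*=\varepsilon B$ supported on $\Exc(g)$, using negative definiteness to arrange $B\cdot C_j<0$, so that $-(K_{X^*}+\Delta^*)$ is $g$-ample and Proposition~\ref{2626} applies as a black box. Both approaches ultimately rest on the same circle of vanishing theorems (Proposition~\ref{2626} is proved via the non-lc ideal sheaf), but yours packages the vanishing more cleanly and avoids introducing the multiplier ideal of the possibly non-klt pair $(X^*,0)$; the paper's approach is more direct and does not require the auxiliary linear-algebra construction of $B$.
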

\begin{proof}
By Theorem \ref{thm32} and 
Proposition \ref{2626}, 
we may assume that 
$K_X$ is $\pi$-nef and $\pi$-big. 
By Theorem \ref{rel2}, there 
exists the birational morphism $\varphi:X\to Y$ over $S$ 
induced by the surjection $\pi^*\pi_*\mathcal O_X(lK_X)\to 
\mathcal O_X(lK_X)$ for some positive 
divisible integer $l$. 
We note that $K_X=\varphi^*K_Y$ by construction. 
Let $f:V\to X$ be a resolution 
such that $K_V+\Delta_V=f^*K_X$ 
and that $\Supp \Delta_V$ is a simple 
normal crossing divisor. 
We consider the following 
short exact sequence 
$$
0\to \mathcal J(X, 0)\to \mathcal O_X\to 
\mathcal O_X/\mathcal J(X, 0)\to 
0. 
$$ 
Note that 
$$
-\llcorner 
\Delta_V\lrcorner -(K_V+\{\Delta _V\})\sim _{\mathbb Q}
-f^*K_X\sim _{\mathbb Q}-f^*\varphi^*K_Y
$$ 
and that $\mathcal J(X, 0)=f_*\mathcal O_V
(-\llcorner \Delta_V\lrcorner)$. 
Then we obtain $$R^i\varphi_*\mathcal J(X, 0)=0$$ 
for every $i>0$ by 
the relative Kawamata--Viehweg--Nadel vanishing theorem. 
Since we have 
$$\dim _{\mathbb C}\Supp (\mathcal O_X/\mathcal J(X, 0))=0, $$ 
we obtain $R^i\varphi_*\mathcal O_X=0$ for 
every $i>0$. 
Therefore, $Y$ has only rational singularities since 
$X$ has only rational singularities. 
It is because $R^ig_*\mathcal O_V\simeq R^i\varphi_*\mathcal 
O_X=0$ for every $i>0$, 
where $g=\varphi\circ f:V\to Y$. 
\end{proof}

\section{Abundance theorem for $\mathbb R$-divisors}\label{new-sec8}

In this section, we generalize the relative log abundance theorem (cf.~Theorem \ref{rel2}) 
for $\mathbb R$-divisors. 

\begin{thm}[Relative abundance theorem for $\mathbb R$-divisors]\label{rdiv}
Let $(X, \Delta)$ be a log surface and let $\pi:X\to S$ be a proper surjective 
morphism onto a variety $S$. 
Assume that 
$X$  is $\mathbb Q$-factorial or that 
$(X, \Delta)$ is log canonical. 
We further assume that $K_X+\Delta$ is $\pi$-nef. 
Then $K_X+\Delta$ is $\pi$-semi-ample.  
\end{thm}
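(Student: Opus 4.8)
The plan is to reduce the $\mathbb{R}$-divisor statement to the already-established $\mathbb{Q}$-divisor case (Theorem \ref{rel2}) by a standard polyhedral-cone argument on the coefficients of $\Delta$. First I would reduce, exactly as in the proof of Theorem \ref{rel1}, to the situation where $X$ is $\mathbb{Q}$-factorial (replace $X$ by its minimal resolution in the log canonical case, using that the minimal resolution of a surface is $\mathbb{Q}$-factorial and that semi-ampleness descends under the resolution morphism) and where $S$ is affine, so that $\pi$ is projective. Since $\pi$-semi-ampleness only depends on the $\mathbb{R}$-linear equivalence class in a suitable sense, the goal becomes: given that $K_X+\Delta$ is $\pi$-nef, produce a presentation of $K_X+\Delta$ as a positive $\mathbb{R}$-linear combination of $\pi$-semi-ample $\mathbb{Q}$-divisors of the form $K_X+\Delta_i$, with each $(X,\Delta_i)$ again a log surface satisfying our hypotheses.

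The key step is the perturbation. Write $\Delta=\sum a_j D_j$ with $D_j$ the distinct prime components. Consider the finite-dimensional affine space $\mathcal{V}$ of $\mathbb{R}$-divisors supported on $\bigcup_j D_j$ whose coefficients lie in $[0,1]$; this is a rational polytope, and $\Delta$ is a point of it. The condition "$K_X+\Delta'$ is $\pi$-nef" is a rational polyhedral condition on $\Delta'\in\mathcal{V}$ (intersection with the finitely many extremal curve classes of $\overline{NE}(X/S)$ that are relevant near $\Delta$, using Theorem \ref{thm-cone} and Proposition \ref{2727}), hence cuts out a rational subpolytope $\mathcal{N}\subset\mathcal{V}$ containing $\Delta$. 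Therefore $\Delta=\sum_{i} r_i \Delta_i$ for finitely many rational points $\Delta_i\in\mathcal{N}$ and real numbers $r_i>0$ with $\sum_i r_i=1$, where each $\Delta_i$ is a boundary $\mathbb{Q}$-divisor with $K_X+\Delta_i$ $\pi$-nef, and $K_X+\Delta=\sum_i r_i(K_X+\Delta_i)$. In Case (A), $X$ stays $\mathbb{Q}$-factorial, so $(X,\Delta_i)$ is a log surface with $X$ $\mathbb{Q}$-factorial; in Case (B) I must choose the $\Delta_i$ inside the log canonical locus of $\mathcal{N}$, which is again rational polyhedral, so $(X,\Delta_i)$ is log canonical — either way the hypotheses of Theorem \ref{rel2} hold for each $(X,\Delta_i)$.

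Applying Theorem \ref{rel2} to each $(X,\Delta_i)$ gives that $K_X+\Delta_i$ is $\pi$-semi-ample, so for suitable positive integers $m_i$ the divisor $m_i(K_X+\Delta_i)$ is $\pi$-free; pulling back from $\mathrm{Proj}$ over $S$ this means $K_X+\Delta$ lies in the relative interior of a cone spanned by $\pi$-free Cartier divisors, which gives that $K_X+\Delta$ is $\pi$-semi-ample in the sense appropriate for $\mathbb{R}$-divisors (it is an $\mathbb{R}_{>0}$-linear combination of members of base-point-free linear systems over $S$, equivalently $\pi^*\pi_*$ globally generates it up to the relevant twist). I expect the main obstacle to be bookkeeping rather than deep: one must check that the polyhedral conditions "$\pi$-nef" and, in Case (B), "log canonical" are genuinely rational polyhedral on $\mathcal{V}$ and that $\Delta$ is an interior point of the common region (or handle boundary points by a limiting/continuity argument), and one must fix the precise definition of $\pi$-semi-ample for $\mathbb{R}$-divisors so that it is stable under taking positive $\mathbb{R}$-combinations. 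Alternatively, the whole argument can be run over $S$ directly by compactifying $S$ and adding $\pi^*G$ for $G$ a general member of a sufficiently positive linear system, exactly as in the proof of Theorem \ref{rel1}, reducing to the absolute $\mathbb{R}$-divisor abundance statement, which is then handled by the same coefficient-perturbation trick applied to Theorem \ref{aban}.
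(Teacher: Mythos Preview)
Your proposal is correct and follows essentially the same route as the paper: reduce to the $\mathbb Q$-factorial case, show that the set $\mathcal N$ of boundaries $\Delta'$ supported on $\Supp\Delta$ with $K_X+\Delta'$ $\pi$-nef is a rational polytope (using the bound on extremal rays from Proposition~\ref{2727}), write $\Delta$ as a convex combination of rational points of $\mathcal N$, and apply the $\mathbb Q$-divisor abundance Theorem~\ref{rel2}. The paper handles the boundary-point issue you flag by passing to the minimal face $\mathcal F$ of $\mathcal N$ containing $\Delta$ and choosing the $\Delta_i$ in the relative interior of $\mathcal F$; since you have already reduced to the $\mathbb Q$-factorial case, your separate treatment of Case~(B) via the log canonical locus is unnecessary.
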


The following proof is essentially due to \cite[Proof of Theorem 2.7]{shokurov-models}. 
 
\begin{proof}
As in the proof of Theorem \ref{rel1}, 
we may always assume that 
$X$ is $\mathbb Q$-factorial. 
We put $F=\Supp \Delta$ and consider the 
real vector space $V=\bigoplus _k \mathbb RF_k$, 
where 
$F=\sum_k F_k$ is the irreducible decomposition. 
We put 
$$
\mathcal P=\{D\in V\, |\, (X, D)\ \text{is a log surface}\}.
$$ 
Then it is obvious that 
$$\mathcal P=\{\sum_k d_k F_k\,|\, 0\leq d_k\leq 1\ \text{for 
every $k$}\}.$$ 
Let 
$\{R_{\lambda}\}_{\lambda\in \Lambda}$ be the set 
of all the extremal rays of $\overline {NE}(X/S)$ spanned by curves. 
We put 
$$
\mathcal N=\{D\in \mathcal P\, |\, (K_X+D)\cdot R_{\lambda}\geq 0\ 
\text{for every $\lambda\in \Lambda$}\}. 
$$ 
Then we can prove that $\mathcal N$ is a rational polytope in $\mathcal 
P$ by using 
Proposition \ref{2727} (cf.~\cite[6.2.~First Main Theorem]{shokurov-models}). 
For the proof, see, for example, the proof of 
\cite[Proposition 3.2]{birkar}. 
We note that 
we can easily see 
$$
\mathcal N=\{D\in \mathcal P\, |\, K_X+D\ \text{is nef}\}. 
$$ 
By the above construction, 
$\Delta \in \mathcal N$. 
Let $\mathcal F$ be the minimal face of $\mathcal N$ containing 
$\Delta$. 
Then we can take $\mathbb Q$-divisors $\Delta _1, \cdots, \Delta_l$ on $X$ and 
positive 
real numbers $r_1, \cdots, r_l$ such that 
$\Delta_i$ is in the relative  interior of $\mathcal F$ for 
every $i$, 
$K_X+\Delta=\sum _i r_i(K_X+\Delta_i)$, 
and $\sum _i r_i =1$. 
By Theorem \ref{rel2}, $K_X+\Delta_i$ is $\pi$-semi-ample 
for every $i$ since $K_X+\Delta_i$ is $\pi$-nef. 
Therefore, $K_X+\Delta$ is $\pi$-semi-ample. 
\end{proof}

We note the following easy but important 
remark on Theorem \ref{rdiv}. 

\begin{rem}[Stability of Iitaka fibrations]
In the proof of Theorem \ref{rdiv}, 
we note the following property. 
If $C$ is a curve on $X$ such that 
$\pi(C)$ is a point and $(K_X+\Delta_{i_0})\cdot C=0$ for some $i_0$, 
then $(K_X+\Delta_i)\cdot C=0$ for every $i$. 
It is because we can find $\Delta'\in \mathcal F$ such that 
$(K_X+\Delta')\cdot C<0$ if $(K_X+\Delta_i)\cdot C >0$ for 
some $i\ne i_0$. 
It is a contradiction. 
Therefore, there exist 
a contraction morphism $f:X\to Y$ over $S$ and 
$g$-ample $\mathbb Q$-Cartier 
$\mathbb Q$-divisors 
$A_1, \cdots, A_l$ on $Y$, 
where $g:Y\to S$, such that 
$K_X+\Delta_i\sim _{\mathbb Q}f^*A_i$ for 
every $i$. 
In particular, we obtain 
$$
K_X+\Delta\sim _{\mathbb R}f^*(\sum _i r_i A_i). 
$$ 
Note that $\sum_i r_i A_i$ is $g$-ample. 
Roughly speaking, 
the Iitaka fibration of $K_X+\Delta$ is the same as that 
of $K_X+\Delta_i$ for every $i$. 
\end{rem}

Anyway, we obtain the relative 
log minimal model program for log surfaces 
(cf.~Theorem \ref{thm32}) and 
the relative log abundance theorem for log 
surfaces (cf.~Theorem \ref{rdiv}) 
in full generality. 
Therefore, we can freely use the log minimal model theory for 
log surfaces in the relative setting. 

We close this section with an easy application of 
Theorem \ref{rdiv}. 

\begin{thm}[Base point free theorem via abundance]
Let $(X, \Delta)$ be a log surface and let $\pi:X\to S$ be 
a proper surjective morphism onto a variety $S$. Assume that 
$X$ is $\mathbb Q$-factorial or that 
$(X, \Delta)$ is log canonical. Let $D$ be a $\pi$-nef 
$\mathbb R$-Cartier $\mathbb R$-divisor 
on $X$. If $D-(K_X+\Delta)$ is $\pi$-semi-ample, 
then $D$ is $\pi$-semi-ample. 
\end{thm}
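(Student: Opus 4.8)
The plan is to reduce the assertion to the relative log abundance theorem for $\mathbb R$-divisors (Theorem~\ref{rdiv}) by absorbing $D-(K_X+\Delta)$ into the boundary. Put $N:=D-(K_X+\Delta)$, which is $\pi$-semi-ample by hypothesis. If $N\sim_{\mathbb R,S}0$, then $D\sim_{\mathbb R,S}K_X+\Delta$; since $D$ is $\pi$-nef, so is $K_X+\Delta$, and Theorem~\ref{rdiv} shows that $K_X+\Delta$, hence $D$, is $\pi$-semi-ample. So the point is to find an effective $\mathbb R$-divisor $N'$ with $N'\sim_{\mathbb R,S}N$ such that $\Delta':=\Delta+N'$ is a boundary and $(X,\Delta')$ satisfies the hypotheses of Theorem~\ref{rdiv}. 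Granting this, $D\sim_{\mathbb R,S}K_X+\Delta'$, this divisor is $\pi$-nef because $D$ is, and Theorem~\ref{rdiv} gives that $K_X+\Delta'$ is $\pi$-semi-ample; since $\pi$-semi-ampleness depends only on the $\mathbb R$-linear equivalence class over $S$, $D$ is $\pi$-semi-ample.

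To build $N'$, I would first write $N\sim_{\mathbb R,S}\sum_i r_iN_i$ with $r_i>0$ real and each $N_i$ a $\pi$-semi-ample $\mathbb Q$-Cartier $\mathbb Q$-divisor, discarding those with $N_i\equiv_\pi0$ (this uses the structure of $\pi$-semi-ample $\mathbb R$-divisors: such $N$ is $\sim_{\mathbb R,S}$ to the pullback of a relatively ample $\mathbb R$-divisor, which lies in the interior of the relatively ample cone and so is a positive combination of relatively ample rational classes). For $m\gg0$ each relative linear system $|mN_i/S|$ is free; since $N_i\not\equiv_\pi0$, a general member $G_i\in|mN_i/S|$ is reduced (by Bertini) and shares no irreducible component with any fixed divisor on $X$, in particular contains no component of $\Delta$. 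Set $N':=\sum_i(r_i/m)G_i\ge0$; then $N'\sim_{\mathbb R,S}N$, and every coefficient of $N'$ is at most $\bigl(\sum_i r_i\bigr)/m$, which is $<1$ for $m$ large. Hence $\Delta':=\Delta+N'$ is a boundary, and $K_X+\Delta'=(K_X+\Delta)+N'$ is $\mathbb R$-Cartier, so $(X,\Delta')$ is a log surface with $D\sim_{\mathbb R,S}K_X+\Delta'$.

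It remains to verify the hypotheses of Theorem~\ref{rdiv} for $(X,\Delta')$. If $X$ is $\mathbb Q$-factorial there is nothing to check. If instead $(X,\Delta)$ is log canonical, one checks that $(X,\Delta+N')$ is again log canonical for $m\gg0$: the coefficients of $N'$ are arbitrarily small, and by the generality of the $G_i$ the support of $N'$ contains no log canonical centre of $(X,\Delta)$, so on a common log resolution no discrepancy drops below $-1$. This last step is the only delicate point, and it is precisely where I use that $D-(K_X+\Delta)$ is $\pi$-semi-ample rather than merely $\pi$-nef: that is what allows $N$ to be replaced by an effective divisor lying in a free relative linear system with coefficients as small as desired, so that the perturbed pair stays inside the class of pairs to which Theorem~\ref{rdiv} applies.
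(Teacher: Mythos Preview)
Your argument is correct and shares the paper's core idea: write $D\sim_{\mathbb R,S}K_X+\Delta+\Delta'$ for a suitable effective $\Delta'$ with $\Delta+\Delta'$ a boundary, then invoke Theorem~\ref{rdiv}. The only substantive difference is how you treat the log canonical (non-$\mathbb Q$-factorial) case. The paper sidesteps the issue entirely by first replacing $X$ with its minimal resolution $f:Y\to X$; since $(X,\Delta)$ is lc, $K_Y+\Delta_Y=f^*(K_X+\Delta)$ has $\Delta_Y$ a boundary, $Y$ is smooth hence $\mathbb Q$-factorial, and one pulls back $D$ and works on $Y$, so Theorem~\ref{rdiv} applies without ever checking that the perturbed pair stays lc. Your route---choosing the $G_i$ general in free relative linear systems, with coefficients $r_i/m$ small, and arguing that no lc centre lies in $\Supp N'$ so that $(X,\Delta+N')$ remains lc---is a legitimate Bertini-type argument, but it is more work and carries more bookkeeping (and your parenthetical justification that $N$ is ``the pullback of a relatively ample $\mathbb R$-divisor'' is not quite accurate: $\pi$-semi-ample does not mean $\pi$-ample, though the decomposition $N=\sum r_iN_i$ into $\pi$-semi-ample $\mathbb Q$-divisors is immediate from the definition used in the paper). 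The minimal-resolution reduction is cleaner and is worth internalizing: for log surfaces it turns the lc hypothesis into the $\mathbb Q$-factorial one at no cost.
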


\begin{proof}
If $(X, \Delta)$ is log canonical, then we replace $X$ with its 
minimal resolution. So we may always assume that 
$X$ is $\mathbb Q$-factorial. 
Without loss of generality, we can further assume that 
$S$ is affine. Since $D-(K_X+\Delta)$ is $\pi$-semi-ample, 
we can write 
$$
D-(K_X+\Delta)\sim _{\mathbb R}\Delta'\geq 0
$$
such that $\Delta+\Delta'$ is a boundary $\mathbb R$-divisor on $X$. 
Therefore, we obtain $D\sim _{\mathbb R}K_X+\Delta+\Delta'$. 
By Theorem \ref{rdiv}, we obtain that 
$D$ is $\pi$-semi-ample. 
\end{proof}

\section{Appendix:~Base point free theorem for log surfaces}\label{sec8} 

In this appendix, we prove the base point free theorem for log surfaces 
in full generality. 
It generalizes Fukuda's base point free theorem 
for log canonical surfaces (cf.~\cite[Main Theorem]{fukuda}). 
Our proof is different from Fukuda's and depends on the 
theory of {\em{quasi-log varieties}}. 
We note that this result is not necessary for the minimal model theory 
for log surfaces discussed in this paper. We also note that 
a more general result was stated in \cite[Theorem 7.2]{ambro} 
without any proofs (cf.~\cite[Theorem 4.1]{fujino3}). 

\begin{thm}[Base point free theorem for log surfaces]\label{bpf}
Let $(X, \Delta)$ be a log surface and 
let $\pi:X\to S$ be a proper surjective morphism onto a variety $S$. 
Let $L$ be a $\pi$-nef Cartier 
divisor on $X$. 
Assume that 
$aL-(K_X+\Delta)$ is $\pi$-nef and $\pi$-big and 
that $(aL-(K_X+\Delta))|_C$ is $\pi$-big for every lc 
center $C$ of the pair $(X, \Delta)$, 
where $a$ is a positive number. 
Then there exists a positive integer $m_0$ such that 
$\mathcal O_X(mL)$ is $\pi$-generated 
for every $m\geq m_0$.  
\end{thm}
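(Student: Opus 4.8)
The plan is to deduce the theorem from the base point free theorem for quasi-log schemes, exploiting the fact that a log surface carries an unusually simple quasi-log structure because its non-lc locus is zero-dimensional. First I would reduce to the absolute case: $\pi$-generation can be checked after shrinking $S$, so we may assume $S$ affine, and then, compactifying $\pi$, we may assume $X$ and $S$ projective; it thus suffices to prove that $\mathcal O_X(mL)$ is globally generated, i.e. $\Bs|mL|=\emptyset$, for all $m\gg 0$. (When $(X,\Delta)$ is log canonical one may as well pass to the minimal resolution, and when $X$ is $\mathbb Q$-factorial Lemma \ref{222} guarantees projectivity of a completion; in general one compactifies $X$ inside a projective surface.)

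Next I would put a quasi-log scheme structure on $X$ with $\omega=K_X+\Delta$: take a log resolution $f\colon Y\to X$ with $K_Y+\Delta_Y=f^*(K_X+\Delta)$ and $\Supp\Delta_Y$ simple normal crossing, and invoke the standard construction (cf.~\cite{fujino3}). Since $\Delta$ is a boundary $\mathbb R$-divisor, $\Supp(\mathcal O_X/\mathcal J_{NLC}(X,\Delta))=\Nlc(X,\Delta)$ is zero-dimensional, so the non-qlc locus of $[X,\omega]$ is a finite set of points and every qlc center is either such a point or an lc center of $(X,\Delta)$, hence a curve. Under this dictionary the hypotheses say precisely that $L$ is nef, $aL-\omega$ is nef and big, and $(aL-\omega)|_C$ is big for every qlc center $C$, while the usual extra condition along the non-qlc locus is automatic because that locus is zero-dimensional. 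The base point free theorem for quasi-log schemes (\cite[Theorem 4.1]{fujino3}, cf.~\cite[Theorem 7.2]{ambro}) then yields the conclusion directly.

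For a self-contained argument one runs the $X$-method, mirroring the proof of Theorem \ref{41}. The vanishing theorem \ref{vani}, in the form \cite[Theorem 8.1]{fujino2} applied to $\mathcal J_{NLC}(X,\Delta)$, $\mathcal J(X,\Delta)$ and their twists, first gives $|mL|\neq\emptyset$ for $m\gg 0$ (the non-vanishing step, using also a Riemann--Roch or non-vanishing argument on curves for the base case). Then, for a minimal qlc center $\mathcal W$ meeting $\Bs|mL|$, the same vanishing gives surjectivity of $H^0(X,\mathcal O_X(m'L))\to H^0(\mathcal W,\mathcal O_{\mathcal W}(m'L))$ for suitable $m'$. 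To manufacture such a $\mathcal W$ inside $\Bs|mL|$ one performs the tie-breaking step at the end of the proof of Theorem \ref{41}: take general $\Xi_1,\Xi_2,\Xi_3\in|mL|$, set $\Theta=\Xi_1+\Xi_2+\Xi_3\sim_{\mathbb Q}3mL$, and let $c$ be maximal with $(X,\Delta+c\Theta)$ log canonical outside $\Nlc(X,\Delta)$; then $0<c<1$, there is a qlc center $\mathcal W$ of $(X,\Delta+c\Theta)$ contained in $\Bs|mL|$, and $K_X+\Delta+c\Theta\sim_{\mathbb Q}\lambda L$ for some $\lambda>0$. If $\dim\mathcal W=0$ we are done at that center; if $\dim\mathcal W=1$ then $L|_{\mathcal W}$ is nef on the semi log canonical curve $\mathcal W$ and $aL|_{\mathcal W}-\omega|_{\mathcal W}$ is big, so $L|_{\mathcal W}$ is semi-ample by the abundance theorem for semi log canonical curves (cf.~\cite{fujino1}, \cite[Theorem 1.3]{fuji-gon}); lifting a basis of $H^0(\mathcal W,\mathcal O_{\mathcal W}(m''L))$ shows $\mathcal W\not\subset\Bs|m''L|$ for suitable $m''$. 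Noetherian induction on $\Bs|mL|$ then completes the proof.

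The hard part is the tie-breaking/perturbation bookkeeping: one must verify that after adding $c\Theta$ and slightly enlarging the coefficient $a$, the divisor $aL-(K_X+\Delta+c\Theta)$ stays nef and big and stays big on restriction to every lc center, so that \ref{vani} and its consequences remain applicable; this is exactly where the zero-dimensionality of $\Nlc(X,\Delta)$ is indispensable, since it lets one disregard any pathology coming from the non-lc part. The second delicate point is the base case on a one-dimensional qlc center, which is typically a non-normal curve, so one genuinely needs abundance for semi log canonical curves rather than a statement about $\mathbb P^1$ alone.
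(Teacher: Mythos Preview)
Your Option A---putting the natural quasi-log structure on $(X,\omega)$ with $\omega=K_X+\Delta$ and invoking the base point free theorem for quasi-log varieties---is correct, and the paper explicitly acknowledges this route (it cites \cite[Theorem 7.2]{ambro} and \cite[Theorem 4.1]{fujino3} just before the theorem). The paper nonetheless gives a more direct argument, and its architecture differs from your Option B in an essential way. The paper first shows $\Bs|mL|\cap\Nklt(X,\Delta)=\emptyset$ for all $m\gg 0$: it applies the quasi-log base point free theorem \cite[Theorem 3.66]{fujino3} not to $X$ but to the one-dimensional scheme $V=\Nklt(X,\Delta)$, where $(aL-\omega)|_V$ is $\pi$-\emph{ample} (this is where the bigness hypothesis on lc centers is used), and then lifts sections from $V$ to $X$ via Nadel vanishing for $\mathcal J(X,\Delta)$. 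Only after that does it run the \emph{classical} Kawamata X-method on a log resolution. The payoff of this separation is the line ``$a_j>-1$ if $r_j>0$'' in the paper's Step~\ref{step-kawa}: once the base locus sits inside the klt locus, the minimum defining $c$ is automatically positive and the standard perturbation argument goes through verbatim, with Shokurov non-vanishing on a single smooth curve $F$ on the resolution.

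Your Option B skips the ``clear $\Nklt$ first'' phase and attempts tie-breaking directly, and this is where the sketch has a real gap. After forming $(X,\Delta+c\Theta)$, the \emph{new} one-dimensional lc centers lie in $\Bs|mL|$, but the bigness hypothesis on $(aL-(K_X+\Delta))|_C$ was assumed only for lc centers of the \emph{original} pair; you assert it persists after enlarging $a$, yet for a curve $\mathcal W$ appearing as a component of $\Theta$ there is no reason $(aL-(K_X+\Delta))\cdot\mathcal W>0$. Separately, an lc center $\mathcal W$ of $(X,\Delta)$ may meet $\Nlc(X,\Delta)$, so the different on $\mathcal W$ need not be a boundary and ``abundance for slc curves'' does not apply as stated; what you actually need is the induced quasi-log structure on $\mathcal W$, at which point you are essentially reproving the quasi-log base point free theorem inductively. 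Finally, your reduction to the projective case by compactification is both unnecessary (the paper works over affine $S$ throughout) and delicate, since extending $L$ and $\Delta$ to a compactification while keeping $K_{\bar X}+\bar\Delta$ $\mathbb R$-Cartier and preserving the nefness/bigness hypotheses at infinity requires justification; the parenthetical about Lemma~\ref{222} or the minimal resolution is also inapplicable, as the theorem assumes neither $\mathbb Q$-factoriality nor log canonicity.
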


\begin{rem}
In Theorem \ref{bpf}, 
the condition 
that $(aL-(K_X+\Delta))|_C$ is $\pi$-big 
for every lc center $C$ of the pair $(X, \Delta)$ is equivalent to 
the following condition:~$(aL-(K_X+\Delta))\cdot C>0$ for every 
irreducible component $C$ of $\llcorner \Delta\lrcorner$ such that 
$\pi(C)$ is a point. 
\end{rem}

\begin{proof}
Without loss of generality, we may assume that $S$ is affine since 
the problem is local. 
We divide the proof into several steps. 
\setcounter{say}{0}
\begin{say}[Quasi-log structures] 
Since $(X, \Delta)$ is a log surface, the 
pair $[X, \omega]$, where $\omega=K_X+\Delta$, has a natural 
quasi-log structure. 
It induces a quasi-log structure $[V, \omega']$ on $V=\Nklt (X, \Delta)$ with 
$\omega'=\omega|_V$. 
More precisely, 
let $f:Y\to X$ be a resolution such that 
$K_Y+\Delta_Y=f^*(K_X+\Delta)$ and that 
$\Supp \Delta_Y$ is a simple normal crossing 
divisor on $Y$. 
By the relative Kawamata--Viehweg vanishing theorem, we 
obtain the following short exact sequence 
\begin{align*}
0 &\to f_*\mathcal O_Y(-\llcorner \Delta_Y\lrcorner)\to f_*\mathcal O_Y 
(\ulcorner (-\Delta^{<1}_Y)\urcorner -\llcorner \Delta^{>1}_Y\lrcorner)\\ &\to 
f_*\mathcal O_{\Delta^{=1}_Y}(\ulcorner 
(-\Delta^{<1}_Y)\urcorner-\llcorner \Delta^{>1}_Y\lrcorner)\to 0.  
\end{align*} 
Note that $$-\llcorner\Delta_Y\lrcorner =\ulcorner 
(-\Delta^{<1}_Y)\urcorner -\llcorner \Delta^{>1}_Y\lrcorner -\Delta^{=1}_Y.$$ 
We also note that the scheme structure of $V$ is defined by the 
multiplier ideal sheaf $\mathcal J(X, \Delta)=
f_*\mathcal O_Y(-\llcorner \Delta_Y\lrcorner)$ of the pair $(X, \Delta)$ 
and that $X_{-\infty}$ (resp.~$V_{-\infty}$) is defined by 
the ideal sheaf $f_*\mathcal O_Y 
(\ulcorner (-\Delta^{<1}_Y)\urcorner -\llcorner \Delta^{>1}_Y\lrcorner)=:\mathcal 
I_{X_{-\infty}}$ 
(resp.~$f_*\mathcal O_{\Delta^{=1}_Y}(\ulcorner 
(-\Delta^{<1}_Y)\urcorner-\llcorner \Delta^{>1}_Y
\lrcorner)=:\mathcal I_{V_{-\infty}}$). 
By construction, $X_{-\infty}\simeq V_{-\infty}$ and 
$X_{-\infty}=\Nlc (X, \Delta)$. 
We note the following commutative diagram. 
$$
\xymatrix{
0\ar[r]& \mathcal J(X, \Delta)\ar[r]\ar@{=}[d]&\mathcal I_{X_{-\infty}}\ar[r]
\ar[d]&\mathcal 
I_{V_{-\infty}}\ar[r]\ar[d]& 0\\ 
0\ar[r]& \mathcal J(X, \Delta)\ar[r]&\mathcal O_X\ar[r]&\mathcal 
O_V\ar[r]& 0
}
$$ 
For details, see \cite[Section 4]{ambro}, \cite[Section 3.2]{fujino3}, 
and \cite{fujino-s}. 
\end{say}
\begin{say}[Freeness on $\Nklt (X, \Delta)$]
By assumption, 
$aL|_V-\omega'$ is $\pi$-ample and 
$\mathcal O_{V_{-\infty}}(mL)$ is $\pi|_{V_{-\infty}}$-generated for every 
$m\geq 0$. 
We note that $\dim V\leq 1$ and 
$\dim V_{-\infty}\leq 0$. 
Therefore, by \cite[Theorem 3.66]{fujino3}, 
$\mathcal O_V(mL)$ is $\pi$-generated for every $m\gg 0$.  
\end{say}
\begin{say}[Lifting of sections]
We consider the following short exact sequence 
$$
0\to \mathcal J(X, \Delta)\to \mathcal O_X\to \mathcal O_V\to 0, 
$$ 
where $\mathcal J(X, \Delta)$ is the multiplier ideal sheaf of $(X, \Delta)$. 
Then we obtain that 
the restriction map 
$$
H^0(X, \mathcal O_X(mL))\to H^0(V, \mathcal O_V(mL))
$$ 
is surjective for every $m\geq a$ since 
$$H^1(X, \mathcal J(X, \Delta)\otimes \mathcal O_X(mL))=0$$ for 
$m\geq a$ by the relative Kawamata--Viehweg--Nadel vanishing 
theorem. 
Thus, there exists a positive integer 
$m_1$ such that 
$\Bs |mL|\cap \Nklt (X, \Delta)=\emptyset$ for 
every $m\geq m_1$. 
\end{say}
So, all we have to do is to prove 
that $|mL|$ is free for every $m \gg 0$ 
under the assumption that 
$\Bs|nL|\cap \Nklt (X, \Delta)=\emptyset$ for 
every $n\geq m_1$.
\begin{say}[Kawamata's X-method]\label{step-kawa} 
Let $f:Y\to X$ be a resolution with a simple normal crossing 
divisor $F=\sum _j F_j$ on $Y$. 
We may assume the following conditions. 
\begin{itemize}
\item[(a)] 
$K_Y=f^*(K_X+\Delta)+\sum _j a_j F_j$ for some $a_j \in \mathbb R$. 
\item[(b)] $f^*|p^lL|=|M|+\sum _j r_j F_j$, where 
$|M|$ is free, $p$ is a prime number such that 
$p^l\geq m_1$, and 
$\sum _j r_j F_j$ is the fixed part of 
$f^*|p^lL|$ for some 
$r_j \in \mathbb Z$ with $r_j \geq 0$. 
\item[(c)] $f^*(aL-(K_X+\Delta))-\sum_j \delta_j F_j$ is $\pi$-ample 
for some $\delta_j \in \mathbb R$ with $0<\delta_j \ll 1$. 
\end{itemize}
We set $$
c=\min \left\{\frac{a_j+1-\delta_j}{r_j}\right\}
$$ 
where the minimum is taken for all the $j$ such that 
$r_j\ne 0$. 
Then, we obtain $c>0$. Here, we used the 
fact that $a_j>-1$ if $r_j>0$. 
It is because $\Bs |p^lL|\cap \Nklt (X, \Delta)=\emptyset$. 
By a suitable choice of the $\delta_j$, we may assume that 
the minimum is attained at exactly one value $j=j_0$. 
We put 
$$
A=\sum _j (-cr_j +a_j-\delta_j)F_j. 
$$ 
We consider 
\begin{align*}
N:=&\  p^{l'}f^*L-K_Y+\sum _j (-cr_j +a_j -\delta_j)F_j \\
=& \ (p^{l'}-cp^l-a)f^*L \quad \quad 
({\text{$\pi$-nef if $p^{l'}\geq cp^l+a$}})
\\ &+c(p^lf^*L-\sum _j r_j F_j) \quad \quad 
({\text{$\pi$-free}})\\
&+f^*(aL-(K_X+\Delta))-\sum _j \delta_j F_j 
\quad\quad ({\text{$\pi$-ample}})
\end{align*} 
for some positive integer $l'>l$. 
Then $N$ is $\pi$-ample if $p^{l'}\geq cp^l+a$. 
By the relative Kawamata--Viehweg vanishing theorem, we have 
$$
H^i(Y, \mathcal O_Y(K_Y+\ulcorner N\urcorner))=0 
$$ for every $i>0$. 
We can write $\ulcorner A\urcorner=B-F-D$, 
where $B$ is an effective $f$-exceptional Cartier divisor, 
$F=F_{j_0}$, $D$ is an effective Cartier divisor such that 
$\Supp D\subset \Supp \sum _{a_j\leq -1}F_j$, and 
$\Supp B$, $\Supp F$, and $\Supp D$ have no common irreducible components 
one another by $\Bs |p^lL|\cap \Nklt (X, \Delta)=\emptyset$. 
We note that $K_Y+\ulcorner N\urcorner=p^{l'}f^*L+\ulcorner A\urcorner$. 
Then the restriction map 
\begin{align*}
&H^0(Y, \mathcal O_Y(p^{l'}f^*L+B))\\ &\to 
H^0(F, \mathcal O_F(p^{l'}f^*L
+B))\oplus H^0(D, \mathcal O_D(p^{l'}f^*L+B))
\end{align*} 
is surjective. 
Here, we used the fact that $\Supp F\cap \Supp D=\emptyset$. 
Thus we obtain that 
$$
H^0(X, \mathcal O_X(p^{l'}L))\simeq H^0(Y, \mathcal O_Y(p^{l'}f^*L+B))\to 
H^0(F, \mathcal O_F(p^{l'}f^*L+B))
$$ 
is surjective. 
We note that $H^0(F, \mathcal O_F(p^{l'}f^*L+B))\ne 0$ for every $l'\gg 0$ 
since $F$ is a smooth curve and 
\begin{align*}
N|_F&=(p^{l'}f^*L-K_Y+B-F-D-\{-A\})|_F\\
&=(p^{l'}f^*L+B)|_F-(K_F+\{-A\}|_F)
\end{align*} 
is $\pi$-ample 
(cf.~Shokurov's non-vanishing theorem). 
Therefore, we have 
$\Bs|p^{l'}L|\subsetneq \Bs |p^lL|$ 
for some $l'\gg 0$ since $f(F)\subset \Bs |p^lL|$. 
By noetherian induction, 
we obtain $\Bs|p^kL|=\emptyset$ for some positive integer $k$. 
\end{say}
Let $q$ be a prime number with $q\ne p$. 
Then we can find $k'>0$ such that $\Bs|q^{k'}L|=\emptyset$ by the same argument 
as in Step \ref{step-kawa}. 
So, we can find a positive integer $m_0$ such that 
$\Bs|mL|=\emptyset$ for every $m\geq m_0$. 
\end{proof}


\end{document}